\documentclass[a4paper,12pt, reqno]{amsart}

\usepackage{geometry}
\usepackage[bbgreekl]{mathbbol}
\usepackage[new]{old-arrows}

\usepackage{amsmath, amssymb, amsthm, url}
\usepackage{euscript}
\usepackage{enumitem}
\usepackage{tikz-cd}
\usepackage{quiver}
\usepackage{mathrsfs}
\usepackage{mathtools}
\usepackage{graphicx}
\usepackage{thmtools}

\usepackage{multirow}
\usepackage{array}
\def\checkmark{\tikz\fill[scale=0.4](0,.35) -- (.25,0) -- (1,.7) -- (.25,.15) -- cycle; }

\usepackage{stmaryrd}
\SetSymbolFont{stmry}{bold}{U}{stmry}{m}{n}

\usepackage[backend=biber,style=alphabetic]{biblatex}
\usepackage[simplepostnote]{lumsdaine-biblatex-misc}
\renewbibmacro{in:}{}

\usepackage[svgnames]{xcolor}
\definecolor{darkgreen}{rgb}{0,0.45,0}
\definecolor{darkred}{rgb}{0.75,0,0}
\definecolor{darkblue}{rgb}{0,0,0.6}
\usepackage[colorlinks,citecolor=darkgreen,linkcolor=darkblue,urlcolor=darkblue]{hyperref}
\usepackage{cleveref}
\usepackage{microtype}

\usepackage{soul}
\setul{3pt}{.4pt}
\usepackage{dsfont}

\usetikzlibrary{arrows.meta}

\usetikzlibrary{calc}
\usetikzlibrary{fit}
\usetikzlibrary{shapes}
\usetikzlibrary{arrows}
\usetikzlibrary{decorations.markings}
\usetikzlibrary{decorations.pathmorphing}
\usetikzlibrary{positioning}
\usetikzlibrary{intersections}

\tikzset{
  commutative diagrams/arrow style=tikz,
}

\tikzset{cd-style/.style={commutative diagrams/every diagram}}
\tikzset{cd-arrow-style/.style={commutative diagrams/.cd, every arrow, every label}}

\makeatletter
\tikzset{
  label/.style n args={2}{
    edge node={node [
      execute at begin node=\iftikzcd@mathmode$\fi,
      execute at end node=\iftikzcd@mathmode$\fi,
      /tikz/commutative diagrams/.cd,every label,
      #2
      ] {#1}}
  }
}
\makeatother

\tikzset{fibtip/.tip={Triangle[open,angle=60:4.5pt]}}
\tikzset{tfibtip/.tip={Bar[sep]Triangle[open,angle=45:4pt]}} 

\pgfarrowsdeclaredouble{fibbtip}{fibbtip}{fibtip}{.fibtip}

\tikzset{ulttip/.tip={Computer Modern Rightarrow[reversed]}}

\pgfarrowsdeclarealias{c}{c}{left hook}{right hook}
\pgfarrowsdeclarealias{c'}{c'}{right hook}{left hook}

\tikzset{fib/.code={\pgfsetarrowsend{fibtip}}}
\tikzset{fibb/.code={\pgfsetarrowsend{fibbtip}}}
\tikzset{tfib/.code={\pgfsetarrowsend{tfibtip}}}
\tikzset{inj/.code={\pgfsetarrowsstart{c}}}
\tikzset{inj'/.code={\pgfsetarrowsstart{c'}}}
\tikzset{cover/.style={->>}}
\tikzset{tcof/.style={tail}}
\tikzset{zigzag/.style={commutative diagrams/rightsquigarrow}}
\tikzset{ulttip/.code={\pgfsetarrowsend{ulttip}}}

\tikzset{suplabel/.style={label={#1}{auto=left,pos=1}}}
\tikzset{sublabel/.style={label={#1}{auto=right,pos=1}}}
\tikzset{sub/.style={label={#1}{auto=right,pos=1}}}

\tikzset{lw/.style={"lw"{#1}},lw/.default={very near end}}
\tikzset{lw'/.style={"lw"'{#1}},lw'/.default={very near end}}
\tikzset{weq/.style={"\sim"}}
\tikzset{weq'/.style={"\sim"'}}
\tikzset{ult/.style={ulttip,"#1"'{very near end}}}
\tikzset{ult'/.style={ulttip,"#1"{very near end}}}

\newcommand{\generalto}[2]{ \mathrel{\mkern-1mu
  \tikz[baseline={([yshift=-0.58ex]a.south)}]{%
    \node[minimum width=1em,align=center,inner xsep=0.5ex,inner ysep=0.25ex] (a) {$\scriptstyle #2$};
    \draw[cd-arrow-style,#1] (a.south west) -- (a.south east);}
 \mkern-1mu}}

\renewcommand{\to}[1][]{ \generalto{->}{#1} }

\newcommand{\injto}[1][]{ \generalto{->,inj}{#1} }

\newcommand{\coverto}[1][]{ \generalto{->>}{#1\ \,} }
\newcommand{\vuto}[2][]{ \mathrel{{\generalto{ulttip}{#1}}\mkern2mu_{{#2}}\mkern-2mu}}
\newcommand{\vuleq}[2][]{ \mathrel{\preccurlyeq_{{#2}}^{{#1}}\mkern-2mu}}

\setlist[enumerate]{label={(\roman*)}, ref={(\roman*)}} 

\theoremstyle{plain}
\newtheorem{Thm}{Theorem}[section]

\newtheorem{prop}[Thm]{Proposition}
\newtheorem{cor}[Thm]{Corollary}
\newtheorem{lem}[Thm]{Lemma}

\theoremstyle{definition}
\newtheorem{defn}[Thm]{Definition}

\newtheorem{nota}[Thm]{Notation}

\newtheorem{ex}[Thm]{Example}
\newtheorem{rmk}[Thm]{Remark}
\newtheorem*{nrmk}{Remark}

\newcommand\pref[1]{\ref{#1}}

\DeclareMathOperator{\cl}{\mathsf{cl}}
\DeclareMathOperator{\subcl}{\mathsf{scl}}
 
\newcommand{\mono}{\hookrightarrow}
\newcommand{\epi}{\twoheadrightarrow}

\newcommand{\C}{\mathscr{C}}
\newcommand{\E}{\mathscr{E}}
\newcommand{\F}{\mathscr{F}}
\newcommand{\B}{\mathscr{B}}
\newcommand{\G}{\mathscr{G}}

\newcommand{\D}{\mathscr{D}}
\newcommand{\A}{\mathscr{A}}

\newcommand{\X}{\mathscr{X}}

\newcommand{\Z}{\mathscr{Z}}
\newcommand{\Y}{\mathscr{Y}}

\newcommand{\eM}{\EuScript{M}}

\newcommand{\dsA}{\mathds{A}}
\newcommand{\dsD}{\mathds{D}}

\newcommand{\ptuf}{\star}

\newcommand{\Sh}{\mathsf{Sh}}

\newcommand{\Sub}{\mathsf{Sub}}

\newcommand{\Topos}{\mathsf{Topos}}
\newcommand{\Toposwep}{\mathsf{Topos}_{\mathrm{wep}}}

\newcommand{\op}{\mathrm{op}}

\newcommand{\TopSp}{\mathsf{TopSp}}

\newcommand{\pt}{\mathsf{pt}}

\newcommand{\id}{\mathsf{id}}
\newcommand{\ev}{\mathsf{ev}}
\newcommand{\Set}{\mathsf{Set}}

\newcommand{\SetO}{\mathsf{Set}[\mathbb{O}]}
\newcommand{\UF}{\mathds{U}}

\newcommand{\Loc}{\mathsf{Loc}}

\newcommand{\Open}{\mathcal{O}}
\newcommand{\Ob}{\mathsf{Ob}}
\newcommand{\Hom}{\mathsf{Hom}}

\newcommand{\ie}{i.e., }

\newcommand{\vuCat}{\mathsf{vuCat}}
\newcommand{\vuCatb}{\mathsf{vuCat_{b}}}
\newcommand{\vuCAT}{\mathsf{vuCAT}}
\newcommand{\vuPos}{\mathsf{vuPos}}

\newcommand{\vuPOS}{\mathsf{vuPOS}}
\newcommand{\vuPostaut}{\mathsf{vuPos_{taut}}}

\newcommand{\vuPOStaut}{\mathsf{vuPOS_{taut}}}
\newcommand{\vupt}{\pt}

\newcommand{\Tref}[1]{|#1|} 
\newcommand{\Lref}[1]{|#1|} 
\newcommand{\Tinj}{\iota}
\newcommand{\Linj}{\iota}
\newcommand{\et}[1]{\mathrm{\acute{E}t}_{#1}} 

\DeclareFontFamily{U}{dmjhira}{}
\DeclareFontShape{U}{dmjhira}{m}{n}{ <-> dmjhira }{}

\newcommand{\ptloc}{\pt}
\newcommand{\shloc}{\Open}
\newcommand{\ptvu}{\vupt}
\newcommand{\shvu}{\Sh}

\newcommand{\emaillink}[1]{\href{mailto:#1}{\sf #1}}
\title[Geometric morphisms of vu-categories]{Geometric morphisms of \\ virtual ultracategories}

\newcommand{\squig}{\mathrel{\tikz[baseline=-0.5ex, scale=0.5]{\draw[<->, decorate, decoration={snake, amplitude=0.1mm, segment length=1mm}] (0,0) -- (1,0);}}}
\newcommand{\crossmark}{\boldsymbol\times}
\newcolumntype{C}{>{\centering\arraybackslash}p{30pt}}

\date{\today}

\author{Gabriel Saadia}
\author{Errol Yuksel}

\address{
Gabriel \textsc{Saadia}: \newline
Department of Mathematics\newline
Stockholm University\newline
Stockholm, Sweden\newline
\emaillink{gabrielsaadia@gmail.com}
}

\address{
Errol \textsc{Yuksel}: \newline
Department of Mathematics\newline
Stockholm University\newline
Stockholm, Sweden\newline
\emaillink{errol.yuksel@math.su.se}
}

\thanks{The authors were supported by the KAW Foundation, under the project “Type Theory for Mathematics and Computer Science” (PI Thierry Coquand).}

\begin{document}

\begin{abstract}
    We build on the recent development that toposes with enough points can be represented as categories of points equipped with ultraconvergence data (a.k.a. virtual ultracategories), characterizing usual classes of geometric morphisms --- surjections, embeddings, hyperconnected maps, and localic maps --- in terms of their points. 
    
    In particular, the characterisation of geometric surjections relies on a notion of “virtual ultraretractions”, which generalises usual retractions and filtered colimits, categorifies Johnstone’s notion of subclosure for topological spaces, and provides a new characterisation of separating classes of points of a topos.
    
    

\end{abstract}

\maketitle
\setcounter{tocdepth}{1}\tableofcontents

\section*{Introduction}

Grothendieck toposes are usually presented as a categorification of topological spaces. This is not completely accurate: toposes actually categorify the point-free notion of \emph{locales}. A notion of categorified space should stand in relation to topological spaces the same way that toposes do to locales; in other words, it should be to toposes what topological spaces are to locales. One such notion has been introduced by Garner under the name of \emph{ionad} in \cite{IonadGarner} and generalised by Di Liberti in \cite{IonadIvan}. Recently, a new proposal of such a notion of categorified space has been introduced independently in three different works \cite{Saa,Hamad2,SUJ} dubbed \emph{virtual ultracategories} in the first above-cited work --- we will abbreviate it to vu-categories. 

A topological space on a set of points can be seen as a posetal multi-categorical structure where the objects are given by the points of the space and multi-arrows are given by the convergence of ultrafilters: there is an arrow $a\vuleq{\sigma}b_s$ from a point $a$ to an ultrafamily of points $(b_s)_{s:\sigma}$ precisely if the point is a limit point of this ultrafamily. Vu-categories are a proof-relevant generalisation of the above: ultrafamilies of points can converge to a point in several different ways, corresponding to several \emph{vu-arrows} $a\vuto{\sigma}b_s$. The three above-mentioned papers focus on proving a \emph{reconstruction theorem} for vu-categories, showing that Grothendieck toposes with enough points embed 2-fully faithfully into the 2-category of vu-categories, but leave the broader development of the theory of these new objects for future work.

One of the main insights this novel approach teaches us is that vu-categories, and therefore toposes with enough points, have a multi-categorical nature; one can thus transpose standard categorical notions to the vu-categorical setting.
Since toposes encode categories of categorified opens, \emph{sheaves}, of categorified spaces, geometrical constructions and concepts there need to be expressed in the language of opens and sheaves, even when the point-centric perspective simplifies the intuition and the computations. The reconstruction theorem suggests that it is possible to give full characterisations of geometrical notions pertaining to toposes with enough points in terms of their vu-categories of points. In this paper, we undertake the task of giving such characterisations for some usual classes of geometric morphisms, which we translate into the vu-categorical language.

\subsection*{Notations}
All our toposes will be Grothendieck toposes; we will denote them by $\E,\F,\dots$ and geometric morphisms between them are denoted $f,g,h,\dots$ The 2-category of toposes is denoted by $\Topos$ and the full subcategory of toposes with enough points by $\Toposwep$.
%
%
We denote by $\vuCAT$ the 2-category of vu-categories, by $\vuCatb$ the 2-category of bounded vu-categories (\ie with an accessible category of vu-sheaves), by $\vuPOS$ the 2-category of large vu-posets, and by $\vuPos$ the 2-category of essentially small vu-posets (\ie equivalent to one with a small set of points).

\subsection*{The adjunction}
    
The reconstruction theorem proved in \cite{Saa,Hamad2,SUJ} can be formally presented as an idempotent adjunction between toposes and (bounded) vu-categories with $\Set$ playing the role of the dualising object.
\[\begin{tikzcd}[sep = 80pt]
    \Topos & \vuCatb.
    \arrow[""{name=0, anchor=center, inner sep=0}, "{\pt = \Topos(\Set, -)}"', shift right, from=1-1, to=1-2]
    \arrow[""{name=1, anchor=center, inner sep=0}, "{\Sh = \vuCat(-,\Set)}"'{pos=0.5}, shift right, curve={height=15pt}, from=1-2, to=1-1]
    \arrow["\dashv"{anchor=center, rotate=-91}, draw=none, from=1, to=0]
\end{tikzcd}\]

The fixed points on the $\Topos$ side are exactly toposes with enough points. More precisely, the counit is well understood: $\Sh(\pt(\E))\to\E$ is given by the \emph{restriction of $\E$ to its points}, that is, the largest subtopos of $\E$ having enough points \cite[Def 8.1]{Saa}.

The unit $\eta_\X : \X\to\pt(\Sh(\X))$ is harder to understand. It is known that there exists vu-categories $\X$ that do not arise from points of a topos \cite[5.2]{AliCont}; a toy example of such a vu-category will be given in \Cref{counterex}. In general $\eta_\X$ is neither fully faithful, nor essentially surjective. We first explain the failure of essential subjectivity by introducing the notion of \emph{vu-retractions}.

\subsection*{Vu-retractions}

That $\pt(\Sh(\X))$ can have more points than $\X$ is a 1-dimensional manifestation of the well-known phenomenon that the frame of opens of a topological space can have more points than the space, or put differently, that the sobrification of a topological space make new points appear.

In this paper, we will show that even if the points in $\pt(\Sh(\X))$ are not necessarily in the image of $\eta_\X$, they are \emph{vu-retracts} --- the natural vu-categorical notion of retractions ---  of points in the image of $\eta_\X$. We will say that the unit is \emph{vu-subdense}, that is, is surjective \emph{up to vu-retractions}.

We will actually show that a geometric morphism is surjective if and only if it induces a vu-subdense vu-functor. In particular, a class of objects of $\pt(\E)$ forms the points of a subtopos of $\E$ (that is, corresponds to a geometrically axiomatisable class of models) if and only if it is closed under vu-retractions. Hence, vu-categories enable us to connect together the topological notion of soberness, the categorical notion of retractions, and the logical notion of geometrically axiomatisable class of models.

\begin{nrmk}
    We would like to emphasise this new concept of vu-retraction (\Cref{def:vu-retract}). It is a very flexible notion, powerful enough to recover the sobrification of a topological space and usual retracts, but also the diagonal embedding of an object of an ultracategory into its ultrapower, as well as filtered vu-colimit (\Cref{ex:vu-retract}), which play a central role in the theory of accessible categories.
\end{nrmk}

\subsection*{Having enough sheaves}

That $\eta_\X$ is not necessarily fully faithful is a purely 1-dimensional phenomenon that does not manifest for topological spaces and locales. This phenomenon happens locally, on the homsets, where the \emph{thickening} operation plays an important role.
For a vu-arrow $f: a\vuto{\sigma}b_s$ and for a map of ultrafilters $\phi : \tau\coverto\sigma$ (which is always surjective), we can ‘reindex’ the convergence along $\phi$ and get a vu-arrow $\phi^*f : a\vuto{\tau}b_{\phi t}$ called the \emph{thickening} of $f$ along $\phi$. This operation says that we can ‘repeat’ the points in the categorified convergence $a\vuto{\sigma}b_s$.
We will introduce the new notion of \emph{tautness} for a vu-category (\Cref{def:taut}), as an algebraic condition on the behaviour of the thickening operation: if a vu-arrow behaves like if it were a thickening then it can be uniquely unthicken.

We will show that even if $\eta_\X$ is not full on the nose, it is always \emph{vu-full} --- that is, is full up to thickening.
Hence the unit cannot create genuinely new vu-arrows, but only unthickenings of vu-arrows already in $\X$. Concretely, $\eta_\X$ only identifies parallel vu-arrows together, and tautness forces some unthickening to appear.
Moreover, since $\eta_X$ is vu-full, we can deduce that, for the unit $\eta_\X$ being fully faithful and being faithful are equivalent conditions (assuming $\X$ taut). In that case we will say that $\X$ \emph{has enough sheaves}. The vu-category $\X$ has enough sheaves if we can distinguish different vu-arrows of $\X$ by looking at the vu-sheaves, and this is equivalent to asking for $\X$ to be the vu-category of a class of points of a topos (\Cref{locsobercaract}).

Whereas at the level of points, one should understand $\eta_\X$ as a vu-categorical analogue of the Cauchy-completion, at the level of homsets, one should understand $\eta_\X$ as a ‘connected component’ process, identifying vu-arrows that cannot be distinguished by the vu-category of sets.

We will actually show that any hyperconnected geometric morphism induces a vu-full vu-functor on its points. In the case of the hyperconnected geometric morphism from a topos $\E$ to its localic reflection, we can give this result a logical meaning (\Cref{rmk:Barr-categorified}). For $p$ and $q$ points of $\E$ seen as models of a geometric theory, if any geometric formula satisfied by $p$ is also satisfied by $q$, then there exists a homomorphism of models from $p$ into some ultrapower of $q$ (and an analogous statement can be given for an ultrafamily of models $(q_s)_{s:\sigma}$).

%

\subsection*{Comparing classes of maps}




To show these two results, we will actually take a more general approach and study how classes of geometric morphisms between toposes relate to the corresponding classes of vu-functors between vu-categories. 

We can summarise all the relations we establish in this paper in the following table.
{\itshape
\begin{table}[h]
    \centering
    \begin{tabular}{r l||C C|C C|C C|}
    & & \multicolumn{2}{c|}{$f\squig\pt(f)$}&
        \multicolumn{2}{c|}{$\Sh(F)\squig F$}&
        \multicolumn{2}{c|}{$f\squig F$}\\
    \cline{3-8}
    geometric morphisms & vu-functors & $\Rightarrow$ & $\Leftarrow$ & $\Rightarrow$ & $\Leftarrow$ & $\Rightarrow$ & $\Leftarrow$ \\
    \cline{1-8}
    étale & étale
        & \checkmark & \checkmark 
        & $\crossmark$ & \checkmark
        & $\crossmark$ & \checkmark \\
    embeddings & fully faithful
        & \checkmark & \checkmark 
        & $\crossmark$ & ?
        & $\crossmark$ & \checkmark \\
    localic & faithful
        & \checkmark & \checkmark 
        & $\crossmark$ & ?
        & $\crossmark$ & \checkmark  \\
    surjective & vu-subdense 
        & \checkmark & \checkmark 
        & ? & \checkmark
        & \checkmark & \checkmark \\
    subhyperconnected & vu-full
        & \checkmark & \checkmark
        & $\crossmark$ & \checkmark 
        & \checkmark & \checkmark
    \end{tabular}
    
    \vspace{5pt}\caption{}
    \label{tableau}
\end{table}}
The first column is for $f$ a geometric morphism between toposes \emph{with enough points} and its vu-functor of points $\pt(f)$. The second column is for $F$ a vu-functor between \emph{bounded} vu-categories and the associated geometric morphism on the vu-sheaves $\Sh(F)$. The last column is between $f:\Sh(\X)\to\E$ and $F:\X\to\pt(\E)$ adjunct to each other, with $\X$ bounded and $\E$ having enough points. In each column, $\Rightarrow$ means that if the geometric morphism on the left is in the class then the vu-functor on the right is in the corresponding class, and vice versa for $\Leftarrow$.
This table will guide us throughout the paper; we will fill it line by line (Corollaries \ref{table:etal}, \ref{table:emb}, \ref{table:shc}, \ref{table:loc}, and \ref{table:surj}). 

\begin{nrmk}
    It is worth emphasising the proof technique we use to establish these vu-characterisations: we reduce to the 0-dimensional case by looking at the localic reflection of the slices of the topos, which we call the \emph{localic slices} (\Cref{def:loc-slice}). The crucial point is that subhyperconnected morphisms and geometric surjections can be detected on their localic slices (\Cref{rmk:propclass}). We suspect that this proof technique can be used in other contexts.
\end{nrmk}

\subsection*{Organisation of the paper}

In \Cref{sec:vu-cat} we go over the basics of vu-categories and settle the notations. We introduce the new crucial notion of \emph{tautness} for vu-categories and we show in particular that the vu-categories of points of a topos are taut (\Cref{lemma:taut-sober}). We then recall in \Cref{sec:vu-pos} the relation between vu-posets and topological spaces, and we introduce the \emph{topological reflection} of a vu-category as the vu-categorical counterpart of the usual localic reflection of a topos.

We start our vu-categorical analysis of geometric morphisms in \Cref{sec:ff} by establishing some easy results and filling the first two lines of the table (\ref{tableau}), for étale morphisms and embedding of toposes (\Cref{table:etal}, \Cref{table:emb}). The new technical result needed is that embeddings induce fully faithful vu-functors at the level of vu-categories. We similarly show that localic geometric morphisms induce faithful vu-functors at the level of vu-categories.

To fill the other lines we will need some technical tools that we introduce in \Cref{sec:merging}. First \emph{localic slices} --- and their vu-counterpart \emph{topological étalements} --- will enable us to reduce a 1-categorical problem into multiple 0-dimensional ones, and then \emph{semi-flat diagrams} will enable us to \emph{merge together} several 0-dimensional results shown on the topological étalements to obtain a 1-dimensional result.
\Cref{sec:vu-full} and \Cref{sec:surjectivity} are independent of each other, even though they follow a similar proof pattern: merging the 0-dimensional results shown on the topological étalements along a semi-flat diagram.

\Cref{sec:vu-full} focuses on the relation between vu-full vu-functors and subhyperconnected geometric morphisms. We first study vu-full vu-functors, and we show crucially that, if the codomain is the vu-category of points of a topos, vu-fullness can be checked on the topological étalements (\Cref{shc-vufull-cor}). We then show analogously that subhyperconnectedness can be checked on the localic slices and we settle the relation between vu-full vu-functors and subhyperconnected geometric morphisms (\Cref{table:shc}). We deduce by orthogonality the relation between faithful vu-functors and localic geometric morphisms (\Cref{table:loc}). As a corollary we obtain that vu-categories arising from a class of points of a topos are exactly the bounded taut vu-categories having enough sheaves (\Cref{locsobercaract}).

\Cref{sec:surjectivity} focuses on surjective morphisms, or equivalently on when a class of points is separating. A full answer already exists in the 0-dimensional case: a subset of a topological space is separating exactly if its \emph{subclosure} is the whole space (\Cref{prop:subcl-sep-topological-case}). We then categorify the above (\Cref{prop:subcl-sep-categorified}), one of the main contributions of this paper --- the notion of \emph{vu-retraction} (\Cref{def:vu-retract}) --- emerges from this categorification. Finally, building upon these results, we establish the relation between geometric surjections and \emph{vu-subdense} vu-functors (\Cref{table:loc}).

\subsection*{Acknowledgments}
Both authors would like to thank their PhD advisor \emph{Peter Lumsdaine} for his guidance and his valuable advice during the elaboration of this work.

\section{Crash course in vu-categories}\label{sec:vu-cat}

\subsection{Ultrafilters}

%
Our ultrafilters will be denoted by $\sigma,\tau,\lambda,\kappa,\dots$ respectively defined on sets $S,T,L,K,\dots$ using the variable $s,t,\ell,k,\dots$ for their elements. Morphisms of ultrafilters are taken in the category of ultrafilters $\UF$ as defined in \cite{Blass} and are denoted by $\phi,\psi,\dots$
We will denote the \emph{terminal ultrafilter} by $\ptuf$, the \emph{tensor of ultrafilters} $\sigma\otimes\tau$ by $\sigma\cdot\tau$, and the \emph{sum of ultrafilters} also called \emph{dependent tensor} $\sum_{s:\sigma}\tau_s$ by $\sigma\cdot\tau_s$, the variable $s$ (resp. $t$, $\ell$, $k$) being always associated by the ultrafilter $\sigma$ (resp. $\tau$, $\lambda$, $\kappa$).

For a set $A$ and an ultrafilter $\sigma$, the elements of the \emph{ultrapower} $A^{\sigma}$ are $\sigma$-families of elements of $A$ and are denoted by $(a_s)_{s:\sigma}$ (or $(a_s)_{\sigma}$). We denote by $\int_{s:\sigma}A_s$ (or $\int_{\sigma}A_s$) the \emph{ultraproduct} of the $\sigma$-family of sets $(A_s)_{s:\sigma}$.
For a morphism of ultrafilters $\phi : \tau\to\sigma$, the \emph{reindexing function} $\phi^*:\int_{s:\sigma}A_s\to\int_{t:\tau}A_{\phi t}$ maps $(a_s)_{s:\sigma}$ to $(a_{\phi t})_{t:\tau}$.
We will happily identify $\int_{s:\sigma}A^{\tau_s}$ and $A^{\sum_{s:\sigma}\tau_s}$, \ie we identify $((a_{s,t})_{t:\tau_s})_{s:\sigma}$ with $(a_{s,t})_{(s,t):\sum_{s:\sigma}\tau_s}$, and we denote it by $(a_{s,t})_{s:\sigma\cdot t:\tau_s}$ (or $(a_{s,t})_{\sigma\cdot\tau_s}$).
For a proposition $\varphi$ on the set underlying an ultrafilter $\sigma$, the \emph{ultraquantification} $(\forall s:\sigma)\ \varphi(s)$ means that the set of $s$ satisfying $\varphi$ is $\sigma$-large, and we say that \emph{$\varphi(s)$ holds for all $s:\sigma$} or \emph{holds for $\sigma$-all $s$}. We refer to \cite[Section 1]{Saa} for more details.

Ultrafilters are non-constructive by nature; the only way to construct them is to invoke the \emph{ultrafilter principle} (a non-constructive principle slightly weaker than the usual axiom of choice \cite{HalpernLevy1971}) to extend a proper filter into an ultrafilter. We will use it in the following form.

\begin{lem}\label{lem:extend-uf}
    Let $\alpha\subseteq 2^S$ be a collection of subsets of a set $S$. Then $\alpha$ can be extended to an ultrafilter, \ie there exists an ultrafilter $\sigma$ on $S$ such that $(\forall s:\sigma) \ s\in A$ for all $A\in\alpha$, if and only if all the finite intersections of subsets of $S$ in $\alpha$ are non-empty.
\end{lem}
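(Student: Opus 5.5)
Both directions are elementary; the only non-constructive input is the ultrafilter principle recalled just before the statement, in its form ``every proper filter on $S$ extends to an ultrafilter''.

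For the forward direction, suppose $\sigma$ is an ultrafilter on $S$ with $(\forall s:\sigma)\ s\in A$ for every $A\in\alpha$, so that every $A\in\alpha$ is $\sigma$-large. Take finitely many $A_1,\dots,A_n\in\alpha$. Since $\sigma$ is closed under finite intersections, $A_1\cap\dots\cap A_n$ is $\sigma$-large. Since $\emptyset$ is not $\sigma$-large, this intersection is non-empty. For $n=0$ the empty intersection is $S$, which is non-empty because $\emptyset\notin\sigma$.

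For the converse, assume every finite intersection of members of $\alpha$ is non-empty. This includes the empty intersection, so $S\neq\emptyset$. Let $\mathcal{F}$ be the upward closure in $2^S$ of the set of finite intersections of members of $\alpha$:
\[\mathcal{F}=\{B\subseteq S \mid A_1\cap\dots\cap A_n\subseteq B \text{ for some } n\geq 0 \text{ and } A_1,\dots,A_n\in\alpha\}.\]
We check the filter axioms in turn:
\begin{enumerate}
  \item $\mathcal{F}$ contains $S$, taking $n=0$.
  \item $\mathcal{F}$ is upward closed by construction.
  \item $\mathcal{F}$ is closed under binary intersection, because two such bounds $A_1\cap\dots\cap A_n\subseteq B$ and $A'_1\cap\dots\cap A'_m\subseteq B'$ concatenate to give $A_1\cap\dots\cap A_n\cap A'_1\cap\dots\cap A'_m\subseteq B\cap B'$.
  \item $\mathcal{F}$ is proper, since a witness $A_1\cap\dots\cap A_n\subseteq\emptyset$ would give an empty finite intersection, contrary to the hypothesis.
\end{enumerate}
By the ultrafilter principle, $\mathcal{F}$ extends to an ultrafilter $\sigma$ on $S$. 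Each $A\in\alpha$ lies in $\mathcal{F}\subseteq\sigma$, so $(\forall s:\sigma)\ s\in A$.

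There is no real obstacle here. The only care needed is to treat the empty intersection as $S$ consistently: this makes $\alpha=\emptyset$ work and ensures $S$ is non-empty.
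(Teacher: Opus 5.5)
Your proof is correct and is the standard argument the paper has in mind: the paper gives no separate proof, presenting the lemma simply as the form in which it uses the ultrafilter principle, namely extending the proper filter generated by the finite intersections of $\alpha$ to an ultrafilter. Your convention that the empty intersection is $S$ is also needed, since it is exactly what excludes the degenerate case $S=\alpha=\emptyset$, where no ultrafilter exists.
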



We fix the following terminology.
\begin{defn}\label{def:final-uf}
    An ultrafilter $\lambda$ on a poset $(L,\leq)$ is said \emph{initial} if it satisfies 
    $(\forall \ell:\lambda)\ \ell\leq \ell_0$
    for each $\ell_0\in L$.
    Analogously, $\lambda$ is said \emph{final} if it satisfies $(\forall \ell:\lambda)\ \ell\geq \ell_0$ for each $\ell_0\in L$.
\end{defn}

We can easily deduce the following from \Cref{lem:extend-uf}.
\begin{lem}\label{lem:ext-final-uf}
    A poset $(L,\leq)$ possesses an initial ultrafilter if and only if it is codirected, \ie for any $p,q\in L$ there exists $r\in L$ such that $r\leq p,q$. An analogous proposition is true for final ultrafilters and directed posets.
\end{lem}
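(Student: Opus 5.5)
We deduce \Cref{lem:ext-final-uf} directly from \Cref{lem:extend-uf}, applied to the collection of principal down-sets
\[\alpha = \{\, {\downarrow}\ell_0 \mid \ell_0\in L \,\},\qquad {\downarrow}\ell_0=\{\ell\in L \mid \ell\leq \ell_0\}.\]
By definition, an ultrafilter $\lambda$ on $L$ is initial exactly when $(\forall \ell:\lambda)\ \ell\in{\downarrow}\ell_0$ for every $\ell_0\in L$, that is, when $\lambda$ extends $\alpha$ in the sense of \Cref{lem:extend-uf}. So $L$ has an initial ultrafilter if and only if every finite intersection ${\downarrow}\ell_1\cap\dots\cap{\downarrow}\ell_n$ is non-empty.

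It then remains to show that this finite intersection property is equivalent to codirectedness.
\begin{itemize}
  \item If $L$ is codirected, we get a lower bound $r\leq \ell_1,\dots,\ell_n$ by induction on $n\geq 1$. For $n\ge2$ we take $r\leq r',\ell_n$, where $r'$ is a lower bound of $\ell_1,\dots,\ell_{n-1}$. Such an $r$ lies in the intersection.
  \item Conversely, non-emptiness of ${\downarrow}p\cap{\downarrow}q$ says precisely that some $r\leq p,q$ exists.
\end{itemize}

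The only delicate point is the empty intersection, i.e.\ the case $n=0$ or $L=\emptyset$. An ultrafilter on $\emptyset$ cannot exist, so $\emptyset$ has no initial ultrafilter, and accordingly we take codirected to include non-emptiness, as is standard. With that convention the empty intersection $L$ is non-empty exactly when $L$ is, and the equivalence holds.

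The statement for final ultrafilters and directed posets follows in the same way, using up-sets ${\uparrow}\ell_0$ in place of down-sets, or equivalently by applying the result above to $L^{\op}$.
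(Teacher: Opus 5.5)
Your proof is correct and is exactly the intended argument: the paper only says the lemma follows easily from \Cref{lem:extend-uf}, and the natural way to do that is to apply it to the principal down-sets, as you do. Your remark on the empty poset is also well taken. Under the paper's gloss of codirectedness, the empty poset is vacuously codirected yet carries no ultrafilter, so non-emptiness does need to be built into the definition.
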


\subsection{Vu-categories}
\begin{defn}\label{def:vucat}
    A \emph{virtual ultracategory} $\X$, or \emph{vu-category} for short, is given by the following data.
    \begin{enumerate}
        \item A collection of objects, also called points.
        \item For every point $a$ and $\sigma$-family of points $(b_s)_{s:\sigma}$, a \emph{homset} $\Hom^\X_{\sigma}(a,b_s)$ (or $\Hom_{\sigma}(a,b_s)$). Elements $f\in\Hom_\sigma(a,b_s)$ are called \emph{vu-arrows of shape} $\sigma$ or \emph{$\sigma$-arrows} and are denoted $f : a \vuto{s:\sigma} b_s$ or simply
        \[f: a \vuto{\sigma}b_s.\]
        
        
        \item For every point $a$, an \emph{identity} $\ptuf$-arrow $\id_a : a\vuto{\ptuf}a$. For a vu-arrow $f:a\vuto{\sigma}b_s$ of shape $\sigma$ and a $\sigma$-family of vu-arrows $(g_s : b_s \vuto{\tau_s}c_{s,t})_{s:\sigma}$, a $\sigma$-\emph{composite}
        \[g_s\circ_{\sigma} f : a\vuto{\sigma\cdot\tau_s}c_{s,t}\]
        that we denote diagrammatically by
        \[a\vuto[f]{\sigma}(b_s\vuto[g_s]{\tau_s}c_{s,t}).\]
        \item\label{diag} For any morphism of ultrafilters $\phi : \tau\to\sigma$ a \emph{reindexing} operation, given by set-maps
        \begin{align*}
            \Hom_\sigma(a,b_s) & \to\Hom_\tau(a,b_{\phi t}) \\
            f & \mapsto \phi^*f
        \end{align*}
        for each $a$ and $(b_s)_{s:\sigma}$.
        The vu-arrow $\phi^*f$ is also called the \emph{thickening} of $f$ along $\phi$. In particular, the thickening of $\id_a$ along the unique map $\sigma \to \ptuf$ is called a \emph{diagonal} of $a$ and is denoted $\delta : a\vuto{\sigma} a$.
    \end{enumerate}
    The above data is required to satisfy the following properties.
    \begin{enumerate}
        \item The identity is neutral and the composition is associative: 
        \[f\circ_{\ptuf} \id_a = f\]
        \[\id_{b_s}\circ_{\sigma} f = f\]
        \[(h_{s,t}\circ_{\tau_s} g_s)\circ_{\sigma} f = h_{s,t}\circ_{\sigma\cdot \tau_s} (g_s\circ_{\sigma} f).\] 
        
        \item The thickening operation is functorial:
        \[{\id_{\sigma}}^*f = f\]
        \[\psi^*(\phi^*f) = (\phi\circ\psi)^*f\]
        that is, it induces functors
        \begin{equation}\tag{$\star$} \label{reindex-functor}
        \begin{split}
            (\UF/\sigma)^{\op} &\to\Set \\
            (\phi : \tau\to\sigma) &\mapsto \Hom_\tau(a,b_{\phi t}). 
        \end{split}
        \end{equation}
        for each $a$ and $(b_s)_{s:\sigma}$.
        \item The thickening operation is compatible with the composition: for $f : a\vuto{\sigma} b_s$ and $(g_s : b_s\vuto{\tau_s} c_{s,t})_{s:\sigma}$
        \begin{enumerate}[label=(\alph*)]
            \item for any $\phi : \lambda\to\sigma$ 
            \[g_{\phi \ell}\circ_{\lambda} (\phi^*f) = (\phi\cdot\id_{\tau_s})^*(g_s\circ_{\sigma} f)\]
            where $\phi\cdot\id_{\tau_s} : \lambda\cdot\tau_{\phi \ell}\to\sigma\cdot\tau_s$,
            \item and for any $(\psi_s : \kappa_s\to\tau_s)_{s:\sigma}$
            \[({\psi_s}^*g_s)\circ_{\sigma} f = (\id_{\sigma}\cdot\psi_s)^*(g_s\circ_{\sigma} f).\]
            where $\id_{\sigma}\cdot\psi_s : \sigma\cdot\kappa_s\to\sigma\cdot\tau_s$.
        \end{enumerate}
    \end{enumerate}
\end{defn}

\begin{rmk}
    We opt for a prefix notation for the thickening rather than a suffix one (as in \cite[3.8]{SUJ}) because we think of thickening as post-composing with a ‘virtual diagonal’. For instance, in the vu-category $\Set$, to thicken along $\phi : \sigma \to \tau$ is to post-compose with the (generalised) diagonal $A^\tau \hookrightarrow A^\sigma$.
\end{rmk}

We now introduce the crucial notion of \emph{tautness} of a vu-category. The behaviour of the thickening operation \eqref{reindex-functor} can be very wild; tautness remedies this issue.

\begin{defn}\label{def:taut}
    A vu-arrow $f:a\vuto{\tau}b_{\phi t}$ is said \emph{reducible along $\phi : \tau \to \sigma$} if $\psi_1^*f = \psi_2^*f$ for all $\psi_1,\psi_2 : \lambda\rightrightarrows \tau$ equalising $\phi$.
    A vu-category $\X$ is said \emph{taut} if any $f:a\vuto{\tau}b_{\phi t}$ reducible along $\phi$ can uniquely be unthickened along $\phi$, \ie there exists a unique vu-arrow $g:a\vuto{\sigma}b_s$ such that $f=\phi^*g$.
\end{defn}

\begin{rmk}
    It is worth noting that this condition can be seen as a sheaf condition: tautness means that for any $a$ and $(b_s)_{s:\sigma}$ the above functor \eqref{reindex-functor}
    is a sheaf over $\UF/\sigma$ for the atomic topology.
    This topology has already been considered in \cite{Eliasson} where it is related to a topology over the category of filters. This opens the way to a study of a filter version of vu-categories \cite{Filterization}.
\end{rmk}

When adapting notions from category theory to vu-categories, we add the prefix vu- to them. For example, we can define vu-functors and vu-natural transformations (see \cite[4.3]{Saa}), that endow the collection of vu-categories with a structure of a (strict) 2-category that we denote by $\vuCAT$.
A vu-functor $F : \X \to \Y$ is said \emph{fully faithful} (resp. faithful) if it induces bijections (resp. injections) on the hom-sets, \ie the set-functions
\[ \Hom^\X_\sigma(a, b_s) \to \Hom^\Y_\sigma(F(a), F(b_s)) \]
are bijective (resp. injective) for all $a$ and $(b_s)_{s:\sigma}$.

\subsection{Points of a topos}

The main way to obtain a vu-category is by taking the points of a topos.
\begin{defn}\label{def:vucat-of-points}
    We denote by $\pt(\E)$ \emph{the vu-category of points} of a topos $\E$. Its objects are given by points of $\E$, \ie geometric morphisms $p : \Set\to\E$, and vu-arrows from $p$ to $(q_s)_{s:\sigma}$ are given by natural transformations $p^*\Rightarrow\int_{s:\sigma}({q_s}^*)$, \ie families of maps $(p^*E\to\int_{s:\sigma}{q_s}^*E)$ natural in $E\in\E$.
\end{defn}

This induces a 2-functor 
\[\pt : \Topos \to \vuCAT,\]
we refer to \cite[4.2(iii)]{Saa} (see also \cite[5.2]{Hamad2} \cite[3.16]{SUJ}) for more details about this construction. 
In particular, by taking the topos $\SetO$, we get a vu-category of small sets, $\Set = \pt(\SetO)$, vu-arrows from a set $A$ to a $\sigma$-family of sets $(B_s)_{s:\sigma}$ are given by set-functions $A\to\int_{s:\sigma}B_s$.

We now show that vu-categories arising as points of a topos are taut; this essentially follows from the following set-theoretical lemma about ultraproducts.
\begin{lem}\label{lemma:taut-sober}
    Let $(A_s)_{s:\sigma}$ be a $\sigma$-family of sets and $\phi : \tau\to\sigma$. The reindexing
    \[\phi^* : \textstyle\int_{s:\sigma}A_s\hookrightarrow\int_{t:\tau}A_{\phi t}\]
    is injective, and its image is described by the $x\in \int_{t:\tau}A_{\phi t}$ such that $\psi_1^*x = \psi_2^*x$ for all pairs of maps $\psi_1,\psi_2 : \lambda\rightrightarrows \tau$ equalising $\phi$.
\end{lem}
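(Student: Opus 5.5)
We prove injectivity directly and then characterise the image by a dichotomy argument, building the required ultrafilter via \Cref{lem:extend-uf}. Throughout, a morphism of ultrafilters $\phi:\tau\to\sigma$ is (represented by) a function $\phi:T\to S$ with $\phi_*\tau=\sigma$, i.e.\ $V\subseteq S$ is $\sigma$-large iff $\phi^{-1}(V)$ is $\tau$-large. Changing the representative only modifies things on a $\tau$-small set, so we may fix representatives.

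\emph{Injectivity.} Suppose $\phi^*(a_s)_\sigma=\phi^*(a'_s)_\sigma$. Then the set $\{t : a_{\phi t}=a'_{\phi t}\}$ is $\tau$-large. This set is $\phi^{-1}(V)$ for $V=\{s: a_s=a'_s\}$, so $V$ is $\sigma$-large because $\phi_*\tau=\sigma$, and hence $(a_s)_\sigma=(a'_s)_\sigma$.

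\emph{The image satisfies the condition.} Let $\psi_1,\psi_2:\lambda\rightrightarrows\tau$ with $\phi\psi_1=\phi\psi_2$. By functoriality of reindexing,
\[\psi_1^*\phi^*a=(\phi\psi_1)^*a=(\phi\psi_2)^*a=\psi_2^*\phi^*a.\]

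\emph{The converse.} This is the main step. Let $x=(x_t)_\tau$ satisfy the equaliser condition, and consider the kernel pair
\[P=\{(t,t')\in T\times T : \phi t=\phi t'\},\]
with projections $\pi_1,\pi_2:P\to T$. We distinguish two cases.

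First, suppose there is a $\tau$-large $U\subseteq T$ such that $x$ is constant on each fibre of $\phi|_U$. Define $a_s=x_t$ for any $t\in U\cap\phi^{-1}(s)$, and let $a_s$ be arbitrary when this set is empty. Then $a_{\phi t}=x_t$ for all $t\in U$, so $\phi^*a=x$ and $x$ lies in the image.

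Otherwise, for every $\tau$-large $U$ there exist $t,t'\in U$ with $\phi t=\phi t'$ and $x_t\neq x_{t'}$. Consider the family of subsets of $P$ consisting of
\[D=\{(t,t')\in P: x_t\neq x_{t'}\}\]
together with the sets $\pi_1^{-1}(U)$ and $\pi_2^{-1}(U)$ for every $\tau$-large $U$. Any finite intersection of these sets contains $D\cap(U\times U)\cap P$ for some $\tau$-large $U$, namely the intersection of the finitely many $\tau$-large sets involved, and this set is non-empty by assumption. Hence \Cref{lem:extend-uf} yields an ultrafilter $\lambda$ on $P$ containing all of them.

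The pushforward $(\pi_i)_*\lambda$ contains every $\tau$-large set, so it equals $\tau$. Thus $\psi_i:=\pi_i:\lambda\to\tau$ are morphisms of ultrafilters, and $\phi\pi_1=\phi\pi_2$ holds on the nose. However, $\pi_1^*x=(x_t)_{(t,t'):\lambda}$ and $\pi_2^*x=(x_{t'})_{(t,t'):\lambda}$ differ on the $\lambda$-large set $D$, so $\pi_1^*x\neq\pi_2^*x$. This contradicts the hypothesis on $x$, so the first case must hold.

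The only delicate point is this construction of $\lambda$ on the kernel pair: we must check both that its marginals are exactly $\tau$ and that the finite-intersection property reduces precisely to the failure of the first case.
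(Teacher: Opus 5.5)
Your proof is correct and follows essentially the same route as the paper's. Injectivity comes from $\phi_*\tau=\sigma$. For the converse, you build an ultrafilter $\lambda$ on the kernel pair $P=T\times_S T$ containing $D$ and the sets $(U\times U)\cap P$ for $\tau$-large $U$; its projections equalise $\phi$ but give $\pi_1^*x\neq\pi_2^*x$. Your case split only makes explicit the finite-intersection check that the paper leaves as ``we can check''.

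One small nit: you should not choose $a_s$ arbitrarily when $U\cap\phi^{-1}(s)$ is empty, since this fails if some $A_s$ is empty. Instead, note that $\phi(U)$ is already $\sigma$-large, because $\phi^{-1}(\phi(U))\supseteq U$, so $a$ need only be defined there.
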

\begin{proof}
    We first show that $\phi^*$ is injective.
    Let $(a_s)_{s:\sigma}$ and $(b_s)_{s:\sigma}$ be elements of $\int_{s:\sigma}A_s$ with the same image by $\phi^*$, that is, $(a_{\phi t})_{t:\tau} = (b_{\phi t})_{t:\tau}$. Then there is a $\tau$-large set of $t$ such that $a_{\phi t} = b_{\phi t}$; and since $\phi$ sends $\tau$-large sets to $\sigma$-large sets, we get $a_s = b_s$ for $\sigma$-all $s$.
    We now prove the characterisation of the image of $\phi^*$. An element in the image clearly satisfies the condition. Conversely, we suppose that $(a_t)\in \int_{t:\tau}A_{\phi t}$ cannot be lifted to $\int_{s:\sigma}A_s$. We choose underlying sets such that $\phi : T\to S$ is total, and we can check that the collection of subsets $T_0\times_{\phi(T_0)}T_0 \subseteq T \times_S T$, for $T_0$ $\tau$-large, together with the subset $D = \{(t,t')\in T \times_S T\ |\ a_t \neq a_{t'}\}$, satisfies \Cref{lem:extend-uf}. We can thus extend them into an ultrafilter $\lambda$ on $T\times_S T$. The sets $(T_0\times_{\phi(T_0)}T_0)$ ensure that the projections give maps $\psi_1,\psi_2 : \lambda\rightrightarrows \tau$ equalising $\phi$, and the set $D$ ensures that $\psi_1^*(a_t)\neq\psi_2^*(a_t)$.
\end{proof}
\begin{prop}\label{prop:sober-taut}
    Let $\E$ be a topos, its vu-category of points $\pt(\E)$ is taut.
\end{prop}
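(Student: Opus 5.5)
We apply \Cref{lemma:taut-sober} objectwise and then check that naturality is inherited. Fix points $p$ and $(q_s)_{s:\sigma}$ of $\E$, a map $\phi:\tau\to\sigma$, and a vu-arrow $f: p\vuto{\tau} q_{\phi t}$ reducible along $\phi$. Such an $f$ is a natural family of maps $f_E : p^*E\to\int_{t:\tau}q_{\phi t}^*E$. In $\pt(\E)$, thickening along $\phi$ is post-composition with the reindexing map
\[\phi^*_E:\textstyle\int_{s:\sigma}q_s^*E\to\int_{t:\tau}q_{\phi t}^*E,\]
and this map is natural in $E$. This is how thickening is defined in \cite[4.2]{Saa}, and I would recall it explicitly at this point.

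Uniqueness is immediate. By \Cref{lemma:taut-sober} each $\phi^*_E$ is injective. So if $\phi^*g=\phi^*g'$ for two natural transformations $g,g'$, then $g_E=g'_E$ for every $E$.

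For existence, fix $E$ and $x\in p^*E$. Reducibility says that $\psi_1^*f=\psi_2^*f$ for every pair $\psi_1,\psi_2:\lambda\rightrightarrows\tau$ equalising $\phi$. Evaluating at $E$ and $x$ gives $\psi_1^*(f_E(x))=\psi_2^*(f_E(x))$ in $\int_{\lambda}q_{\phi\psi_1 \ell}^*E$. Here we use that $\phi\psi_1=\phi\psi_2$, so both sides live in the same ultraproduct. By the image characterisation in \Cref{lemma:taut-sober}, $f_E(x)$ then lies in the image of $\phi^*_E$. Combined with injectivity, this gives a unique $g_E(x)\in\int_{s:\sigma}q_s^*E$ with $\phi^*_E(g_E(x))=f_E(x)$. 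This defines a function $g_E$ with $\phi^*_E\circ g_E=f_E$.

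It remains to check that $g$ is natural. Take $h:E\to E'$ in $\E$ and compare the two composites
\[(\textstyle\int q_s^*h)\circ g_E \quad\text{and}\quad g_{E'}\circ p^*h .\]
After post-composing each with the injective map $\phi^*_{E'}$, both become $f_{E'}\circ p^*h=(\int q_{\phi t}^*h)\circ f_E$. For the first composite this uses the naturality of $\phi^*$ in $E$, and for the second it uses the naturality of $f$. Injectivity of $\phi^*_{E'}$ then gives equality of the two composites. Hence $g:p\vuto{\sigma}q_s$ is a vu-arrow with $\phi^*g=f$, and it is unique by the previous paragraph.

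The main obstacle is bookkeeping rather than mathematics. One has to verify that the abstract thickening in $\pt(\E)$ really is componentwise post-composition with the set-theoretic reindexing of ultraproducts. One must also check that the pairs $\psi_1,\psi_2$ quantified over in the definition of reducibility are the same pairs that appear in \Cref{lemma:taut-sober}. Once this identification is stated, everything reduces to the lemma pointwise, together with the naturality check above.
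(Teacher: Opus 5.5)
Your proof is correct and follows the same route as the paper: reducibility gives the equaliser condition on each $f_E(\xi)$, and \Cref{lemma:taut-sober} then produces a unique lift along $\phi^*_E$. You also spell out two points the paper leaves implicit. Uniqueness follows from injectivity of $\phi^*_E$, and the pointwise lifts form a natural transformation, again by injectivity.
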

\begin{proof}
    We take a vu-arrow $f : p \vuto{\tau}q_{\phi t}$ in $\pt(\E)$, that is, maps $(f_E : p^*E\to\int_{t:\tau}{q_{\phi t}^*E})$ natural in $E\in\E$, such that $f$ is reducible along $\phi : \tau\to\sigma$ in $\pt(\E)$.
    We need to show that for any $\xi\in p^*E$, $f_E(\xi)$ is in the image of $\int_{s:\sigma}{q_s}^*E\hookrightarrow\int_{t:\tau}{q_{\phi t}}^*E$. 
    %
    The hypothesis that $f$ is reducible along $\phi$ ensures that $\psi_1^*f_E(\xi) = \psi_2^*f_E(\xi)$ for all $\psi_1,\psi_2 : \lambda\rightrightarrows \tau$ equalising $\phi$, and we conclude with \Cref{lemma:taut-sober}.
\end{proof}

\subsection{Reconstruction theorem}

\begin{defn}
    A \emph{vu-sheaf} over a vu-category $\X$ is a vu-functor $A : \X\to\Set$. We denote by $\Sh(\X)\coloneqq\vuCAT(\X,\Set)$ the category of vu-sheaves over $\X$.
\end{defn}

\begin{nota}
    For $A$ a vu-sheaf over $\X$ and $p$ a point of $\X$, the set $A(p)$ will also be denoted by $A_p$. For a vu-arrow $f : p\vuto{\sigma}q_s$ and $\xi\in A_p$, the $\sigma$-family $A(f)(\xi)\in\int_{\sigma}A_{q_s}$ will also be denoted by $f_*\xi$.
\end{nota}

\begin{ex}
    For $E\in\E$, we define the vu-sheaf $\ev(E)\in\Sh(\pt(\E))$ on the vu-category of points of $\E$ by $\ev(E)(p) \coloneqq p^*E$. This construction induces a functor of categories $\E\to\Sh(\pt(\E))$ (see \cite[5.2]{Saa}).
\end{ex}

We recall that the category $\Sh(\X)$ is always an infinity-pretopos (see \cite[5.7]{Saa} and \cite[4.3]{SUJ}), hence to be a topos only lacks its local presentability. A vu-category $\X$ is said \emph{bounded} if its category of vu-sheaves is locally presentable, and hence a Grothendieck topos. We will denote by $\vuCatb\subseteq\vuCAT$ the 2-full subcategory of bounded vu-categories, and we get a 2-functor $\Sh : \vuCatb\to\Topos$.

\begin{rmk}
    The term \emph{bounded} is used in \cite[5.8]{Saa} and is inspired by the analogue notion for Garner's ionad \cite[3.7]{IonadGarner}. We would however prefer an intrinsic notion of \emph{small generated vu-category} that can be directly seen on the vu-category (by opposition to boundedness that is seen on the category of vu-sheaves) and that would in particular imply boundedness. Such a notion does not exist yet, and as the present paper focus on the relation between vu-categories and toposes, the notion of boundedness will be sufficient.
\end{rmk}

The main result of \cite{Saa,Hamad2,SUJ} can then be stated as follows.
\begin{Thm}[Reconstruction theorem]\label{thm:reconstruction}
    The 2-functor $\pt : \Topos \to\vuCAT$ corestricts to bounded vu-categories and is right adjoint to $\Sh : \vuCatb\to\Topos$; moreover, this 2-adjunction is pseudo-idempotent and the fixed points on the $\Topos$ side are precisely the toposes with enough points.
\end{Thm}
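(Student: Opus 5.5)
Since this theorem is the main result of \cite{Saa,Hamad2,SUJ}, the plan is only to recall how its proof is organised. Everything is built from the natural evaluation pairing with $\Set$. Given a bounded vu-category $\X$ and a topos $\E$, I would set up an equivalence of categories
\[\Topos(\Sh(\X),\E)\simeq\vuCAT(\X,\pt(\E)),\]
natural in both variables, and deduce the 2-adjunction from it.

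The first step is the corestriction: $\pt(\E)$ is bounded. For this I would show that $\Sh(\pt(\E))$ is locally presentable. Write $\E$ as sheaves on a small site $C$. Then every vu-sheaf on $\pt(\E)$ is a colimit of vu-sheaves of the form $\ev(E)$ for $E$ ranging over a small set of generators, so $\Sh(\pt(\E))$ is a small-generated $\infty$-pretopos, hence a Grothendieck topos by Giraud's theorem.

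Next come the unit and counit.
\begin{itemize}
\item The unit $\eta_\X:\X\to\pt(\Sh(\X))$ sends a point $a$ to the evaluation functor $A\mapsto A_a$. This functor preserves finite limits and all colimits, because these are computed pointwise in $\Sh(\X)$; it is therefore the inverse image of a point. It sends a vu-arrow $f:a\vuto{\sigma}b_s$ to the natural transformation $A_a\to\int_\sigma A_{b_s}$, $\xi\mapsto f_*\xi$.
\item The counit $\varepsilon_\E:\Sh(\pt(\E))\to\E$ has inverse image $\ev:\E\to\Sh(\pt(\E))$. I must check that $\ev$ is left exact and cocontinuous; this holds because each $p^*$ is, and because ultraproducts of sets commute with finite limits.
\end{itemize}
The triangle identities then reduce to the observation that $\eta$ and $\ev$ are both given by evaluation, so the composites are identities up to canonical isomorphism. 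From the equivalence of hom-categories one gets the 2-adjunction.

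Then I would prove pseudo-idempotence: $\pt(\varepsilon_\E)$ should be an equivalence, or equivalently $\varepsilon_{\Sh(\X)}$ should be. Here is the key computation. A point of $\Sh(\pt(\E))$ restricts along $\ev$ to a point of $\E$, and every point of $\E$ arises this way by evaluation. On vu-arrows one compares natural transformations $p^*\Rightarrow\int q_s^*$ on $\E$ with those on $\Sh(\pt(\E))$. I would extend them by colimits, using that each vu-sheaf is a colimit of $\ev(E)$'s, with these colimits computed in $\Sh(\pt(\E))$ and preserved by points. This is the step I expect to be the main obstacle. It is the real content of the reconstruction theorem, and it relies on a Makkai-style or conceptual-completeness argument showing that vu-sheaves on $\pt(\E)$ are ``generated'' by $\E$, rather than formal adjunction yoga.

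The last step identifies the fixed points. By definition $\varepsilon_\E$ is an equivalence exactly when $\ev$ is an equivalence. Since $\varepsilon_\E$ factors through the largest subtopos of $\E$ with enough points (cf.\ \cite[Def 8.1]{Saa}), faithfulness of $\ev$ holds exactly when the points of $\E$ are jointly conservative, i.e.\ when $\E$ has enough points. Essential surjectivity onto $\Sh(\pt(\E))$ follows for such $\E$ from the colimit description obtained in the previous step.
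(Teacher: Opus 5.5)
The paper gives no proof of this theorem. It imports it from \cite{Saa,Hamad2,SUJ}, so a bare citation would match it. The formal parts of your outline are correct once $\pt(\E)$ is known to be bounded:
the currying equivalence $\Topos(\Sh(\X),\E)\simeq\vuCAT(\X,\pt(\E))$;
the unit and counit built from pointwise finite limits and colimits of vu-sheaves;
the triangle identity $\pt(\varepsilon_{\E})\circ\eta_{\pt(\E)}\cong\id$;
and the fact that $\ev$ is faithful exactly when $\E$ has enough points.

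Read as a proof, however, the outline has a genuine gap. Every non-formal step rests on the claim that each vu-sheaf on $\pt(\E)$ is covered by (a colimit of) vu-sheaves $\ev(E)$, and this is never established. In your boundedness step you present it as following from a choice of site, which it does not, and you use it to get boundedness. In your pseudo-idempotence step you then call the same fact ``the main obstacle''. So the argument is circular as organised. Giraud's theorem would also need local smallness, which is not automatic here: a morphism of vu-sheaves on $\pt(\E)$ is a family indexed by a proper class of points. This generation property is the heart of the Makkai--Lurie style conceptual completeness that the cited works prove.

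Two further steps fail even if generation is granted in the form you state.

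First, extending a transformation $x^*\ev\Rightarrow\int_\sigma y_s^*\ev$ ``by colimits'' needs more than generation. The transformation is natural only with respect to maps of the form $\ev(u)$. So the transition maps of the colimit diagrams must lie in the image of $\ev$, or $\ev$ must be dense. The same issue affects deriving essential surjectivity of $\ev$ from a colimit description.

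Second, your last step concludes that $\ev$ is an equivalence from faithfulness and essential surjectivity, and that inference is invalid. For a nontrivial group $G$, the inverse image $\Set^G\to\Set$ of the canonical point of the topos of $G$-sets is faithful and essentially surjective but not full.

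Fullness must be argued separately, and the paper's $0$-dimensional tools suffice:
\begin{enumerate}
\item By \Cref{thm:etale}, the sub-vu-sheaves of $\ev(E)$ are the upsets of $\Tref{\pt(\E/E)}$.
\item By \Cref{prop:rpt=ptr} and \Cref{prop:opens-are-upsets}, these upsets are restrictions of opens of $\pt(\Lref{\E/E})$. Hence $\Sub_{\E}(E)\to\Sub_{\Sh(\pt(\E))}(\ev(E))$ is surjective, and it is injective when $\E$ has enough points.
\item Applying this bijection to graphs gives fullness of $\ev$.
\item Applying it to kernel pairs of epimorphisms $\ev(E)\epi A$ gives essential surjectivity.
\end{enumerate}
What remains, and what your outline never supplies, is the existence of such covers $\ev(E)\epi A$ for every vu-sheaf $A$.
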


In other words, we have the following 2-reflection:
\[\begin{tikzcd}[sep = 40pt]
    \Toposwep & \vuCatb.
    \arrow[""{name=0, anchor=center, inner sep=0}, "\pt"', shift right, hook, from=1-1, to=1-2]
    \arrow[""{name=1, anchor=center, inner sep=0}, "\Sh"'{pos=0.5}, shift right, curve={height=6pt}, from=1-2, to=1-1]
    \arrow["\dashv"{anchor=center, rotate=-91}, draw=none, from=1, to=0]
\end{tikzcd}\]
%
%
%
Hence, the counit $\ev : \E\to\Sh(\pt(\E))$ is an equivalence precisely if $\E$ has enough points, however the unit $\eta_\X : \X\to\pt(\Sh(\X))$ is not necessarily an equivalence. The next definition can be understood as a ‘dual’ of the toposic notion to have enough points.
%



\begin{defn}\label{def:enough-sheaves}
    A class of vu-sheaves over $\X$ is said \emph{to separate vu-arrows}, if for parallel vu-arrows $f,g$ in $\X$ to be equal, it is enough to check that $A_f = A_g$ on the vu-sheaves $A$ in the class.
    A vu-category is said to \emph{have enough sheaves} if the class of all vu-sheaves over it separates vu-arrows. 
\end{defn}

\begin{rmk}\label{rmk:enough-sheaves-unit}
    For a bounded vu-category $\X$ having enough sheaves is equivalent to $\eta_\X : \X\to\pt(\Sh(\X))$ being faithful. We will see in \Cref{locsobercaract} that, assuming $\X$ is taut, this is equivalent to the a priori stronger condition of $\eta_\X : \X\hookrightarrow\pt(\Sh(\X))$ being fully faithful.
\end{rmk}

\begin{ex}\label{counterex}
    Not every vu-category has enough sheaves; we give here a toy example of such a vu-category without enough sheaves. Take $K$ a non-trivial connected compact Hausdorff topological space, for example the unit interval. We consider $\X$ the vu-category with two objects $a$ and $b$ and $\Hom_\sigma(a,b) \coloneqq \pt(K)$ the underlying set of $K$ for any ultrafilter $\sigma$, and for $(x_s)_{s:\sigma}$ in $K$, the composite
    \[a\vuto[\delta]{\sigma}(a\vuto[x_s]{\tau_s}b) \text{ given by } a\vuto[y]{\sigma\cdot\tau_s}b\]
    where $y$ is the limit of $(x_s)_{s:\sigma}$ in $K$.
    One can check that this data is enough to define the vu-category $\X$ and that a vu-sheaf over $\X$ corresponds to the data of two sets $A$ and $B$ and a continuous map from $K$ into $B^A$ (with the product topology of $B$ seen as a discrete space). But as $K$ is connected and $B^A$ is totally disconnected, such a continuous map is always constant. Hence $\Sh(\X)$ is the Sierpiński topos $\Set^{[1]}$, where $[1]$ denotes the walking arrow category. Hence, the unit $\eta_\X : \X\to\pt(\Sh(\X)) = [1]$ identifies together all parallel vu-arrows of $\X$.
\end{ex}

\section{Topological reflection}\label{sec:vu-pos}

That toposes are a categorification of locales can be seen in terms of the 2-categories they form. The category of sheaves on a locale forms a topos ––– toposes arising in this way are called \emph{localic} --- and one can check that this assignment induces an embedding $\Loc \hookrightarrow \Topos$. The frame of opens of a locale can be recovered from its topos of sheaves as the frame of subterminal objects, and more generally, for any topos $\E$, the frame $\Sub_\E(1)$ is the universal locale associated to it.

\begin{nota}\label{prop:Lref-adjoint}
    We denote by $\Linj: \Loc \hookrightarrow \Topos$ the 2-fully faithful 2-functor sending a locale to its topos of sheaves. The assignment $\E \mapsto \Sub_\E(1)$ induces a 2-functor $\Lref{-}: \Topos \to \Loc$ which is left adjoint to $\Linj$. In other words, we get the following 2-reflection:
    \[\begin{tikzcd}[sep = 40pt]
    	\Loc & \Topos.
    	\arrow[""{name=0, anchor=center, inner sep=0}, "\Linj"', shift right, hook, from=1-1, to=1-2]
    	\arrow[""{name=1, anchor=center, inner sep=0}, "\Lref{-}"'{pos=0.45}, shift right, curve={height=6pt}, from=1-2, to=1-1]
    	\arrow["\dashv"{anchor=center, rotate=-91}, draw=none, from=1, to=0]
    \end{tikzcd}\]
\end{nota}
Our objective is to show that a similar relation holds between topological spaces and vu-categories: the former form a reflective 2-subcategory of the latter (see \Cref{prop:Tref-adjoint}).

\subsection{Vu-posets}

\begin{defn}
    A vu-category $\X$ is \emph{posetal} if each $\Hom_\sigma(p, q_s)$ is a subsingleton, we write $p \vuleq{\sigma} q_s$ if this set is inhabited. A \emph{vu-poset} is a posetal vu-category, the full subcategory of $\vuCAT$ of vu-posets (resp. essentially small vu-posets) is denoted by $\vuPOS$ (resp. $\vuPos$).
\end{defn}

We will use $X,Y,\dots$ rather than  $\X, \Y,\dots$ to denote vu-posets. The intuition behind $p \vuleq{\sigma} q_s$ is that $(q_s)_{s:\sigma}$ is an ultrasequence of points which converges to $p$.

\begin{defn}
    Let $X$ be a topological space. Write $p \vuleq[X]{\sigma} q_s$, and we say that \emph{$(q_s)_{s:\sigma}$ converges to $p$}, when $p \in U$ implies $(\forall s:\sigma)\ q_s \in U$ for each open $U$ of $X$.
\end{defn}

The resulting structure defines a vu-poset on the points on $X$. The vu-poset obtained is the vu-category of points of the topos of sheaves above $X$ restricted to the points of $X$. 

\begin{rmk}\label{reconstruction-thm-topo}
    The reconstruction theorem (\Cref{thm:reconstruction}), restricted to this context, identifies sheaves over a topological space and vu-sheaves over the associated vu-poset. Two different direct proofs of this result can be found in \cite[5.4]{Saa} and in \cite[5.7]{Hamad2}.
\end{rmk}

In particular the opens of a topological space $X$ are completely determined by the relation $\vuleq[X]{}$, in the following sense (see for example \cite[2.5]{Saa}).

\begin{prop}\label{prop:opens-are-upsets}
    Let $X$ be a topological space, then a subset $A$ of $X$ is open if and only if $p \in A$ and $p \vuleq[X]{\sigma} q_s$ implies $(\forall s:\sigma)\ q_s \in A$.
\end{prop}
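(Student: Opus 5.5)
The statement is the classical characterisation of open sets by ultrafilter convergence, so the plan is to prove the two implications separately, using only the definition of $\vuleq[X]{\sigma}$ and \Cref{lem:extend-uf}.

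\textbf{Forward direction.} This is immediate from the definition. If $A$ is open, $p\in A$ and $p\vuleq[X]{\sigma}q_s$, then by definition of convergence applied to the open $U=A$ we get $(\forall s:\sigma)\ q_s\in A$.

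\textbf{Converse.} Suppose $A$ satisfies the closure condition, and suppose for contradiction that $A$ is not open. Then some $p\in A$ has no open neighbourhood contained in $A$, so every open $U\ni p$ meets $X\setminus A$. We build an ultrafilter as follows.
\begin{enumerate}
    \item Take $S=X\setminus A$ as the underlying set.
    \item Consider the collection $\alpha=\{U\cap S \mid U \text{ open},\ p\in U\}$.
    \item Finite intersections of opens containing $p$ are again opens containing $p$. Hence finite intersections of members of $\alpha$ have the form $U\cap S$ with $p\in U$, and these are nonempty by our choice of $p$.
    \item By \Cref{lem:extend-uf} there is an ultrafilter $\sigma$ on $S$ with $(\forall s:\sigma)\ s\in U$ for every open $U\ni p$.
\end{enumerate}
Now set $q_s\coloneqq s$. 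The property of $\sigma$ says exactly that $p\vuleq[X]{\sigma}q_s$. The closure hypothesis then gives $(\forall s:\sigma)\ q_s\in A$. But every $q_s$ lies in $S=X\setminus A$, so the set $\{s \mid q_s\in A\}$ is empty, which cannot be $\sigma$-large. This is a contradiction, so $A$ is open.

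\textbf{Main step.} The only substantive point is constructing the ultrafilter concentrated on the complement of $A$ and converging to $p$. That is a direct application of \Cref{lem:extend-uf}, and it works precisely because the neighbourhood filter of $p$ is closed under finite intersections. No other obstacles are expected.
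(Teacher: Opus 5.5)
Your proof is correct. The forward direction is immediate from the definition. In the converse, extending the trace of the open neighbourhoods of $p$ on $X\setminus A$ to an ultrafilter via \Cref{lem:extend-uf} yields a $\sigma$-family in $X\setminus A$ converging to $p$, which contradicts the closure hypothesis.

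The paper does not prove this statement itself; it simply cites \cite[2.5]{Saa}, and your argument is the standard one for this classical fact.
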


We can deduce that the 2-functor sending a topological space $X$ to the vu-poset $(X,\vuleq[X]{})$ is 2-fully faithful.

\begin{prop}
    Let $X$ be a topological space, $p$ and $(q_s)_{s:\sigma}$ points of $X$, and $\phi : \tau \to \sigma$. Then $p \vuleq[X]{\tau} q_{\phi t}$ implies $p \vuleq[X]{\sigma} q_s$.
\end{prop}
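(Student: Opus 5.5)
We unfold the definition of convergence and reduce the statement to the fact that morphisms of ultrafilters push large sets forward to large sets. This is the same fact already used in the injectivity part of \Cref{lemma:taut-sober}.

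Assume $p \vuleq[X]{\tau} q_{\phi t}$ and let $U$ be an open of $X$ containing $p$. We must show that $(\forall s:\sigma)\ q_s\in U$, i.e.\ that the set $S_U=\{s\in S \mid q_s\in U\}$ is $\sigma$-large. The hypothesis gives that $T_U=\{t \mid q_{\phi t}\in U\}$ is $\tau$-large. As in the proof of \Cref{lemma:taut-sober}, we choose representatives of the underlying sets so that $\phi : T\to S$ is a total function. Then $T_U=\phi^{-1}(S_U)$, up to a $\tau$-small set.

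The key step is to recall what a morphism of ultrafilters $\phi:\tau\to\sigma$ in $\UF$ is (following \cite{Blass}): a germ of functions with $\phi_*\tau=\sigma$. In other words, a subset $A\subseteq S$ is $\sigma$-large if and only if $\phi^{-1}(A)$ is $\tau$-large. Applying this to $A=S_U$, the $\tau$-largeness of $\phi^{-1}(S_U)$ yields the $\sigma$-largeness of $S_U$.

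Since $U$ was an arbitrary open containing $p$, we conclude $p\vuleq[X]{\sigma} q_s$. The only point requiring care is the bookkeeping about germs: $\phi$ may only be defined on a $\tau$-large set, and families are only defined up to $\tau$-small sets. This is harmless, since all the conditions involved are invariant under modification on small sets. I expect no genuine obstacle.
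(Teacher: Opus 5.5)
Your proof is correct and takes essentially the paper's approach. Both arguments reduce to $\phi_*\tau=\sigma$, so that $\{s \mid q_s\in U\}$ is $\sigma$-large exactly when its preimage $\{t \mid q_{\phi t}\in U\}$ is $\tau$-large; the paper packages this as the equality of pushforward ultrafilters $q_*(\phi_*(\tau))=q_*(\sigma)$ on $X$, which gives the equivalence for arbitrary subsets $U$ and not only for opens.
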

\begin{proof}
    It is enough to see that for any subset $U$ of $X$, $(\forall t:\tau)\ q_{\phi t}\in U$ if and only if $(\forall s:\sigma)\ q_{s}\in U$. And both these conditions are equivalent to $U$ being large for the ultrafilter $q_*(\phi_*(\tau)) = q_*(\sigma)$.
\end{proof}
Hence, in the vu-poset of points of a topological space, any vu-arrow can be unthickened as much as possible. Actually, tautness simplifies in the posetal case, and the above proposition expresses that the vu-poset of points of a topological space is taut.
\begin{prop}\label{def:taut-poset}
    A vu-poset $(X,\vuleq{})$ is taut if and only if its vu-inequalities can always be unthickened as much as possible, \ie if $p \vuleq{\tau} q_{\phi t}$ implies $p \vuleq{\sigma} q_s$ for $\phi : \tau \to \sigma$. 
\end{prop}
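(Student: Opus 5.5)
We prove both directions by unwinding \Cref{def:taut} in the posetal setting, where all homsets are subsingletons. Fix a point $p$, a family $(q_s)_{s:\sigma}$ and $\phi:\tau\to\sigma$.

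In a vu-poset the reducibility condition is automatic. Suppose $f$ witnesses $p\vuleq{\tau}q_{\phi t}$ and $\psi_1,\psi_2:\lambda\rightrightarrows\tau$ equalise $\phi$. Then $\psi_1^*f$ and $\psi_2^*f$ both lie in $\Hom_\lambda(p,q_{\phi\psi_1\ell})=\Hom_\lambda(p,q_{\phi\psi_2\ell})$, and this is a subsingleton, so they are equal. Hence every vu-arrow $f:p\vuto{\tau}q_{\phi t}$ is reducible along $\phi$. Uniqueness of the unthickening is also automatic, since $\Hom_\sigma(p,q_s)$ is a subsingleton.

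It follows that tautness reduces to the existence of unthickenings: whenever $p\vuleq{\tau}q_{\phi t}$ holds, there must be some $g\in\Hom_\sigma(p,q_s)$ with $\phi^*g=f$. The equation $\phi^*g=f$ is automatic, because both sides live in the subsingleton $\Hom_\tau(p,q_{\phi t})$. So existence amounts exactly to $p\vuleq{\sigma}q_s$.

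Putting this together, $X$ is taut if and only if $p\vuleq{\tau}q_{\phi t}$ implies $p\vuleq{\sigma}q_s$ for every $\phi:\tau\to\sigma$, which is the statement. The only point that needs care is checking that the equalising pairs reindex into the same homset, so that the subsingleton argument applies; beyond that there is no real obstacle.
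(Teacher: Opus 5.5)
Your proof is correct and follows the paper's route: the paper's one-line argument is exactly your observation that in a vu-poset every vu-arrow $f : p \vuto{\tau} q_{\phi t}$ is automatically reducible along $\phi$. Your additional remarks, that uniqueness and the equation $\phi^*g = f$ hold automatically because the homsets are subsingletons, just spell out what the paper leaves implicit.
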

\begin{proof}
    It is enough to notice that in a vu-poset, a vu-arrow $f: a\vuto{\tau}b_{\phi t}$ is always reducible along $\phi : \tau\to\sigma$.
\end{proof}




%


\subsection{Topological concepts on vu-posets}

In light of \Cref{prop:opens-are-upsets}, we can define the notion of open subset for an arbitrary vu-poset --- a similar definition has been given in \cite[3.23]{SUJ}.  

\begin{defn}\label{def:open}
    An \emph{open}, also called an \emph{upset}, $U$ of a vu-poset $X$ is a subposet of $X$ such that if $p \in U$ and $p \vuleq{\sigma} q_s$ then $q_s \in U$ for all $s : \sigma$. We denote by $\Open(X)$ the poset of opens of $X$.
    Similarly, a subposet $F$ of $X$ is \emph{closed} if it is downward-closed, that is, if $p \vuleq{\sigma} q_s$ and $(\forall s:\sigma)\ q_s\in F$ imply $p \in F$.
    And the \emph{closure} of a collection $A$ of points of a vu-poset $X$, denoted by $\cl(A)$, is the collection of $p\in X$ such that there exists an ultrafamily $(a_s)_{s:\sigma}$ of points in $A$ such that $p \vuleq{\sigma} a_s$.
\end{defn}
\begin{rmk}
    The collection of opens of a vu-poset is not necessarily small, for instance those of a large discrete vu-poset. We will have more to say about vu-posets which possess only a set's worth of opens later.
\end{rmk}

\begin{lem}\label{lem:smallest-closedset}
    Let $A$ be a collection of points of a vu-poset $X$, $\cl(A)$ is the smallest closed subposet of $X$ containing $A$.
\end{lem}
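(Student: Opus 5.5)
We need three things: $A\subseteq\cl(A)$, $\cl(A)$ is closed, and any closed $F$ containing $A$ contains $\cl(A)$. We use only the vu-poset axioms: identities, composition, and thickening.

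\emph{Containment.} For $a\in A$, the identity $\id_a : a\vuto{\ptuf}a$ gives $a\vuleq{\ptuf}a$. Viewing $a$ as a $\ptuf$-family in $A$, this shows $a\in\cl(A)$.

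\emph{Minimality.} Let $F$ be closed with $A\subseteq F$, and let $p\in\cl(A)$, witnessed by $p\vuleq{\sigma}a_s$ with each $a_s\in A\subseteq F$. Then $(\forall s:\sigma)\ a_s\in F$, so the downward-closure condition of \Cref{def:open} gives $p\in F$.

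\emph{Closedness.} This is the substantive step. Suppose $p\vuleq{\sigma}q_s$ and $(\forall s:\sigma)\ q_s\in\cl(A)$; we must show $p\in\cl(A)$.
\begin{enumerate}
    \item Because the set of such $s$ is $\sigma$-large, and changing a $\sigma$-family on a $\sigma$-small set does not change it, we may assume $q_s\in\cl(A)$ for every $s$.
    \item For each $s$, choose an ultrafilter $\tau_s$ and a family $(a_{s,t})_{t:\tau_s}$ in $A$ with $q_s\vuleq{\tau_s}a_{s,t}$. This uses a choice of witnesses, which is acceptable in the ambient set theory.
    \item Choose vu-arrows $f: p\vuto{\sigma}q_s$ and $g_s: q_s\vuto{\tau_s}a_{s,t}$, and form the $\sigma$-composite $g_s\circ_\sigma f : p\vuto{\sigma\cdot\tau_s}a_{s,t}$.
    \item The points $a_{s,t}$, indexed by $(s,t)$ in the sum ultrafilter $\sigma\cdot\tau_s$, all lie in $A$. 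Hence $p\vuleq{\sigma\cdot\tau_s}a_{s,t}$ witnesses $p\in\cl(A)$.
\end{enumerate}

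The only delicate point is step 2. The witnesses $\tau_s$ and $(a_{s,t})$ must be chosen uniformly in $s$ on actual representatives, so that the composite is defined as a $\sigma$-family of vu-arrows. This is routine bookkeeping rather than a real obstacle.
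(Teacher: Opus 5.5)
Your proof is correct. The paper states this lemma without proof, and your argument is exactly the routine verification it leaves to the reader: identities give $A\subseteq\cl(A)$, minimality is immediate from the definition of closedness, and closedness of $\cl(A)$ follows from the composite $g_s\circ_\sigma f : p\vuto{\sigma\cdot\tau_s}a_{s,t}$.

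Two cosmetic remarks. First, thickening is never actually used. Second, in step 2 the witnesses $(\tau_s,(a_{s,t}))$ range over a proper class, so strictly you should restrict to witnesses of minimal rank (Scott's trick) before applying choice. This is the same kind of choice the paper makes freely in the proof of \Cref{prop:Barr}.
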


\begin{lem}\label{lem:closure}
    Let $A$ be a collection of points of a vu-poset $X$, a point $p\in X$ belongs to $\cl(A)$ if and only if every open $U \subseteq X$ which contains $p$ intersects $A$. 
\end{lem}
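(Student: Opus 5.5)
The plan is to prove both directions separately, using only Definition~\ref{def:open}, the fact that a vu-poset is posetal, and Lemma~\ref{lem:smallest-closedset}. The cleanest route goes through complements: a subposet $U$ is open exactly when $X\setminus U$ is closed. Indeed, $U$ is open if and only if, whenever $p\vuleq{\sigma} q_s$ and it is not the case that $(\forall s:\sigma)\ q_s\in U$, we have $p\notin U$. Since $\sigma$ is an ultrafilter, "not $\sigma$-all $q_s$ lie in $U$" is equivalent to "$(\forall s:\sigma)\ q_s\in X\setminus U$". That is precisely the downward-closure condition for $X\setminus U$. I will record this equivalence first, since it reduces the lemma to Lemma~\ref{lem:smallest-closedset}.

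($\Rightarrow$) Suppose $p\in\cl(A)$, so there is $(a_s)_{s:\sigma}$ in $A$ with $p\vuleq{\sigma} a_s$. Let $U$ be an open containing $p$. By openness, $(\forall s:\sigma)\ a_s\in U$. The corresponding $\sigma$-large set is nonempty, so some $a_s\in U\cap A$. This direction is direct and needs no lemma.

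($\Leftarrow$) Suppose $p\notin\cl(A)$. By Lemma~\ref{lem:smallest-closedset}, $\cl(A)$ is closed, so $U\coloneqq X\setminus\cl(A)$ is open by the complement observation above. This $U$ contains $p$. It is also disjoint from $A$, since $A\subseteq\cl(A)$; this inclusion is witnessed by the principal ultrafilter/diagonal: $\id_a : a\vuto{\ptuf} a$ shows $a\vuleq{\ptuf} a$. This is the contrapositive of the required implication.

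The only step with any real content is the complement observation, specifically the use of the ultrafilter property to negate the ultraquantifier. Everything else is routine unfolding of Definition~\ref{def:open}. If one prefers not to rely on Lemma~\ref{lem:smallest-closedset}, the ($\Leftarrow$) direction can instead check directly that $X\setminus\cl(A)$ is open. Take $p\notin\cl(A)$ with $p\vuleq{\sigma} q_s$, and suppose for contradiction that $\sigma$-many $q_s$ lie in $\cl(A)$. Choosing witnesses $q_s\vuleq{\tau_s} a_{s,t}$ and composing gives $p\vuleq{\sigma\cdot\tau_s} a_{s,t}$, so $p\in\cl(A)$, a contradiction. This essentially reproves the closedness of $\cl(A)$ via composition.
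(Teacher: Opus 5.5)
Your proof is correct and follows the paper's own argument. The forward direction uses openness along $p \vuleq{\sigma} a_s$. The converse takes $X\setminus\cl(A)$ as an open containing $p$ that misses $A$. You also spell out two points the paper leaves implicit: complements of closed sets are open, by the ultrafilter property, and $A\subseteq\cl(A)$.
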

\begin{proof}
    If $p\in\cl(A)$, then $p\vuleq{\sigma}a_s$ for some $(a_s)_{s:\sigma}$ in $A$, and so any open $U$ containing $p$ contains the $(a_s)$ and so intersects $A$. Conversely if $p\notin\cl(A)$, then $X\setminus\cl(A)$ is an open containing $p$ that does not intersect $A$.
\end{proof}

\Cref{prop:opens-are-upsets} shows that if one starts with a topological space $X$, sees it as a vu-poset with $\vuleq[X]{}$, and then back as a topological space whose opens are the upsets, then one gets back to $X$ itself. We now turn to the analogue question for vu-posets, is it true that $\vuleq{}$ can be fully recovered by the opens it induces? The next proposition answers this question in the positive for taut vu-posets.

 

\begin{prop}\label{prop:Barr}
    Let $X$ be a vu-poset. If $p$ and $(q_s)_{s \in \sigma}$ in $X$ are such that every open $U$ of $X$ which contains $p$ also contains $(q_s)_{s\in\sigma}$, then $p \vuleq{\tau} q_{\phi t}$ for some thickening $\phi : \tau \to \sigma$.
\end{prop}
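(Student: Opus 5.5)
We argue by contraposition, using the closure operator and \Cref{lem:closure} together with the ultrafilter principle in the form of \Cref{lem:extend-uf}. Assume that there is no thickening $\phi : \tau\to\sigma$ with $p \vuleq{\tau} q_{\phi t}$; we will build an open $U$ containing $p$ but not containing $(q_s)_{s:\sigma}$, i.e.\ with $(\forall s:\sigma)\ q_s\notin U$.

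For each $\sigma$-large set $S_0\subseteq S$, let $A_{S_0} = \{q_s \mid s\in S_0\}$. The candidate opens are the complements $X\setminus\cl(A_{S_0})$, which are open because $\cl(A_{S_0})$ is closed (\Cref{lem:smallest-closedset}). Such a complement omits every $q_s$ with $s\in S_0$, so it fails to contain $(q_s)_{s:\sigma}$. It therefore suffices to find one $S_0$ with $p\notin\cl(A_{S_0})$. We suppose instead that $p\in\cl(A_{S_0})$ for every $\sigma$-large $S_0$ and aim for a contradiction.

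Under this supposition, for every $\sigma$-large $S_0$ there is an ultrafilter $\tau_{S_0}$ on a set $T_{S_0}$ and a map $t\mapsto s(t)\in S_0$ with $p\vuleq{\tau_{S_0}} q_{s(t)}$. We glue these witnesses into a single one lying over $\sigma$. Let $\Lambda$ be the directed poset of $\sigma$-large sets ordered by reverse inclusion, and take a final ultrafilter $\lambda$ on it (\Cref{lem:ext-final-uf}). Composing with the diagonal $\delta : p\vuto{\lambda}p$, we obtain $p\vuleq{\lambda\cdot\tau_{S_0}} q_{s(S_0,t)}$, where $\tau = \lambda\cdot\tau_{S_0}$ is the sum over $S_0:\lambda$ and the indexing map is $\phi(S_0,t)=s(S_0,t)$.

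The remaining step, which is the main obstacle, is to check that $\phi : \tau\to\sigma$ is a morphism of ultrafilters, i.e.\ $\phi_*\tau=\sigma$. Fix a $\sigma$-large $B$. For $\lambda$-all $S_0$ we have $S_0\subseteq B$ by finality, and then for all $t$ we have $s(S_0,t)\in S_0\subseteq B$. Hence $\phi^{-1}(B)$ is $\tau$-large. Since $\sigma$ is an ultrafilter and the pushforward $\phi_*\tau$ is an ultrafilter containing $\sigma$, the two coincide. This yields a thickening with $p\vuleq{\tau} q_{\phi t}$, contradicting our assumption.

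Hence some $\sigma$-large $S_0$ has $p\notin\cl(A_{S_0})$, and $U = X\setminus\cl(A_{S_0})$ is an open containing $p$ but not $(q_s)_{s:\sigma}$. This contradicts the hypothesis of the proposition, which proves it. The set-theoretic bookkeeping for the dependent sum (using a single family of choices of $\tau_{S_0}$, which requires choice) is routine.
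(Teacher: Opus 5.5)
Your proof is correct and is essentially the paper's argument. For each $\sigma$-large set you obtain $p\in\cl(\{q_s : s\in S_0\})$: the paper does this via \Cref{lem:closure}, you via the openness of the complement of the closure. You then glue the witnesses along an ultrafilter on the large sets that is final for reverse inclusion (the paper's ``initial'' for inclusion), and you check $\phi_*\tau=\sigma$ exactly as the paper does.

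The double-contradiction framing is unnecessary. The hypothesis directly forces $p\in\cl(A_{S_0})$ for every $\sigma$-large $S_0$, and the remainder is a direct construction.
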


\begin{proof}
    We fix $S$ an underlying set of $\sigma$.
    
    First, we notice that if $\ell \subseteq S$ is $\sigma$-large then $p \in \cl(\{q_s : s \in \ell\})$. Indeed, if an upset $U$ contains $p$ then $\{s \in \ell : q_s \in U\}$ is $\sigma$-large as the intersection of two large sets, hence non-empty --- thus  \Cref{lem:closure} ensures $p \in \cl(\{q_s : s \in \ell\})$. Hence we get some $(s^\ell_t \in \ell)_{t:\tau_{\ell}}$ such that \[p \vuleq{\tau_{\ell}} q_{s^\ell_t}.\]

    Using \ref{lem:ext-final-uf}, we can consider an initial ultrafilter $\lambda$ on the codirected poset of $\sigma$-large sets; we can thus merge the above $(s^\ell_t)\in S^{\tau_\ell}$ together along $\lambda$ into the family
    \[\phi \coloneqq (s^\ell_t)_{(\ell,t) : \tau} \in S^\tau\]
    where $\tau\coloneqq\lambda\cdot\tau_\ell$.
    
    We show it is a thickening of $\sigma$, \ie that $\phi_*(\tau) = \sigma$.
    It is enough to show, for each $\sigma$-large $\ell_0\subseteq S$, that $(\forall (\ell,t) : \tau)\ \phi{(\ell,t)}\in \ell_0$, \ie that
    \[(\forall \ell : \lambda)\ (\forall t : \tau_\ell)\ s^\ell_t\in \ell_0,\]
    and for $\ell: \lambda$, we can assume $\ell\subseteq \ell_0$ by initiality of $\lambda$, and then for $t:\tau_\ell$ we have $s^{\ell}_t \in \ell \subseteq \ell_0$ as desired.
    
    Hence, $\phi$ induces a map $\phi:\tau\to\sigma$, and the composite
    \[p \vuleq{\ell:\lambda} (p \vuleq{t:\tau_{\ell}} q_{s^{\ell}_t})\]
    gives $p \vuleq{\ell:\lambda\cdot t:\tau_\ell} q_{s^{\ell}_t}$, \ie $p \vuleq{(\ell,t):\tau} q_{\phi(\ell,t)}$ as wanted.
\end{proof}

\begin{rmk}
    We should stress the importance of this proposition as it enables us to get a vu-inequality (and thus a vu-arrow) from an assumption on the opens. This result is already essentially in \cite{Barr}, and will be categorified later in \Cref{prop:Barr-categorified} (see also \Cref{rmk:Barr-categorified}).
\end{rmk}

For a taut vu-poset we deduce the following.
\begin{cor}\label{cor:Barr-taut} \label{cor:poset-iso-points}
    For $p$ and $(q_s)_{s:\sigma}$ points of a taut vu-poset, $p\vuleq{\sigma}q_s$ if and only if every open containing $p$ also contains $(q_s)_{s:\sigma}$. In particular, two points of a taut vu-poset which belong to the same opens are isomorphic.
\end{cor}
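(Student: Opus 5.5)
The corollary follows by combining \Cref{prop:Barr} with the characterisation of tautness for vu-posets in \Cref{def:taut-poset}. We treat the two directions of the equivalence separately and then deduce the statement about isomorphic points.

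\emph{Forward direction.} Suppose $p\vuleq{\sigma}q_s$. Then every open $U$ containing $p$ contains $q_s$ for $\sigma$-all $s$; this is exactly the defining property of an upset in \Cref{def:open}. This direction does not use tautness.

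\emph{Converse.} Suppose every open containing $p$ also contains $(q_s)_{s:\sigma}$. By \Cref{prop:Barr} we obtain a thickening $\phi:\tau\to\sigma$ with $p\vuleq{\tau}q_{\phi t}$. Since the vu-poset is taut, \Cref{def:taut-poset} lets us unthicken this vu-inequality along $\phi$, which gives $p\vuleq{\sigma}q_s$. The substantive work is contained in \Cref{prop:Barr}; here tautness is needed only to remove the thickening.

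\emph{Isomorphic points.} Let $p,q$ belong to the same opens. Apply the equivalence with $\sigma=\ptuf$ and the constant family $q$: every open containing $p$ contains $q$, so $p\vuleq{\ptuf}q$. Symmetrically, $q\vuleq{\ptuf}p$. In a vu-poset the composite of these two $\ptuf$-arrows is a $\ptuf$-arrow $p\vuto{\ptuf}p$. Since homsets are subsingletons, this composite must equal $\id_p$, and likewise the other composite equals $\id_q$. Hence $p\cong q$.
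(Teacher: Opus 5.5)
Your proof is correct and follows the paper's argument: \Cref{prop:Barr} yields a vu-inequality to a thickening $\phi:\tau\to\sigma$, tautness in the form of \Cref{def:taut-poset} removes the thickening, and the isomorphism claim comes from taking $\sigma=\ptuf$ in both directions. You also write out two steps the paper leaves implicit: the easy forward direction, and why mutual $\ptuf$-inequalities give an isomorphism in a vu-poset.
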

\begin{proof}
    Suppose that every open containing $p$ also contains $(q_s)_{s:\sigma}$, then \Cref{prop:Barr} gives us $p \vuleq{\sigma} q$ and $q \vuleq{\tau} p$, and by tautness of the vu-poset (\Cref{def:taut-poset}) we deduce $p \vuleq{\ptuf} q$ and $q \vuleq{\ptuf} p$. 
\end{proof}

We are now able to prove the following proposition, which 
concludes the characterisation of the essential image of $\TopSp$.

\begin{cor}\label{prop:small-vu-pos}
    For $X$ a taut vu-poset, the following conditions are equivalent,
    \begin{enumerate}      
        \item $X$ has a set's worth of opens; \label{cond:poset-small-opens}
        \item $X$ is essentially small; \label{cond:poset-small}
        \item $X$ belongs to the essential image of $\TopSp$. \label{cond:poset-TopSp}
    \end{enumerate}
\end{cor}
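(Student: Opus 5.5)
I would prove the cycle (iii) $\Rightarrow$ (ii) $\Rightarrow$ (i) $\Rightarrow$ (iii).

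\emph{(iii) $\Rightarrow$ (ii)} is immediate. A topological space has a set of points, so the vu-poset $(X,\vuleq[X]{})$ has a small set of objects, and anything equivalent to it is essentially small.

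\emph{(ii) $\Rightarrow$ (i).} We may replace $X$ by an equivalent vu-poset with a small set of points. Opens are defined by an upward-closure condition, and this condition is invariant under isomorphism of points: if $p\cong p'$ via $\ptuf$-inequalities, composition shows that $p$ and $p'$ lie in the same opens. Hence opens correspond to subsets of the small set of representatives, and there is only a set of them.

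\emph{(i) $\Rightarrow$ (iii)} is the substantive direction.

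First I show that $X$ is essentially small. Map each point $p$ to the set $\{U\in\Open(X) : p\in U\}$. Since $\Open(X)$ is a set, this map lands in the set $2^{\Open(X)}$. By \Cref{cor:poset-iso-points} (this is where tautness is used), two points with the same image are isomorphic. So $X$ has a set of isomorphism classes, and choosing representatives gives an equivalent vu-poset $X'$ on a small set.

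Next I equip $X'$ with the topology whose open sets are $\Open(X')$. This is a topology: upsets are closed under arbitrary unions and finite intersections, and finite intersections stay upsets because ultraquantifiers commute with finite conjunctions. The remaining task is to identify $\vuleq[X']{}$ with the convergence relation of this topology. By definition, $p\vuleq[X']{\sigma}q_s$ in the topological sense means that every open containing $p$ contains $\sigma$-all of the $q_s$. By \Cref{cor:Barr-taut}, this is equivalent to $p\vuleq{\sigma}q_s$ in the taut vu-poset $X'$. Hence $X\simeq X'$ lies in the essential image of $\TopSp$.

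The main obstacle is the essential-smallness step. Its ingredients are \Cref{cor:poset-iso-points} and a check that passing to a skeleton preserves both tautness and the lattice of opens. Tautness is preserved because isomorphic points can be swapped along $\ptuf$-arrows by composition. With that in hand, the final identification is just \Cref{cor:Barr-taut}.
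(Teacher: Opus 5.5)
Your proof is correct and follows the paper's argument: the substantive step uses \Cref{cor:poset-iso-points} to bound the isomorphism classes by $2^{\Open(X)}$, then puts the topology of upsets on a small set of representatives and identifies the relations via \Cref{cor:Barr-taut}, exactly as the paper does for (i)$\Rightarrow$(ii)$\Rightarrow$(iii). The only difference is in the easy directions: you go (iii)$\Rightarrow$(ii)$\Rightarrow$(i) using that opens are closed under isomorphism, whereas the paper closes the cycle with (iii)$\Rightarrow$(i) via \Cref{prop:opens-are-upsets}; your version spells out the invariance of opens under equivalence that the paper leaves implicit.
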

\begin{proof}
    By \Cref{cor:poset-iso-points}, the isomorphism class of a point of $X$ is completely determined by the way it lies in the opens of $X$. If \pref{cond:poset-small-opens} holds, then there is at most a set's worth of ways to lie in the opens, and thus the isomorphism classes of $X$ form a set, \ie \pref{cond:poset-small} holds.
    If \pref{cond:poset-small} holds, we can suppose up to equivalence that $X$ is a small taut vu-poset. Then the upsets on $X$ define a topology on the set of points of $X$ and \Cref{cor:Barr-taut} shows that $X$ is the vu-poset of points of this topological space, \ie \pref{cond:poset-TopSp} holds.
    Finally, {\pref{cond:poset-TopSp} $\Rightarrow$ \pref{cond:poset-small-opens}} follows from \Cref{prop:opens-are-upsets}.
\end{proof}

In the rest of this paper, in view of the previous proposition, we blur the distinction between topological spaces and essentially small taut vu-posets. That is, we identify the two 2-category $\TopSp$ with the 2-category $\vuPostaut$ of essentially small taut vu-posets.

\subsection{The topological reflection}

We can now define the topological reflection of a vu-category, the vu-categorical counterpart of the localic reflection of a topos.

\begin{defn}\label{def:Tref}
    Let $\X$ be a vu-category, its \emph{topological reflection}, denoted by $\Tref{\X}$, is the vu-poset on the objects of $\X$ where $p \vuleq{\sigma}q_s$ if there exists a vu-arrow $p \vuto{\tau} q_{\phi t}$ in $\X$ for some thickening $\phi : \tau \to \sigma$.
\end{defn}

\begin{prop}
    The topological reflection $\Tref{\X}$ is taut and the natural vu-functor $\X \to \Tref{\X}$ is universal among vu-functors from $\X$ into a taut vu-poset, that is, for a taut vu-poset $X$, any vu-functor $\X\to X$ factorises uniquely trough $\X \to \Tref{\X}$.  
\end{prop}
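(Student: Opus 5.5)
We must first check that $\Tref{\X}$ really is a vu-poset, i.e.\ that the relation of \Cref{def:Tref} has identities, composites and thickenings; then prove tautness; then the universal property.

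\emph{Structure.} Identities: $\id_p : p\vuto{\ptuf}p$ with $\phi=\id_{\ptuf}$ gives $p\vuleq{\ptuf}p$. Thickening: if $p\vuleq{\sigma}q_s$ is witnessed by $f: p\vuto{\tau}q_{\phi t}$ with $\phi:\tau\to\sigma$, and $\psi:\kappa\to\sigma$ is given, we need $p\vuleq{\kappa}q_{\psi k}$. Pulling back $\phi$ along $\psi$ in $\UF$ is not available directly, so I will instead use \Cref{prop:Barr}. Every open of $\Tref{\X}$ is an open of $\X$'s underlying relation and conversely: a set $U$ closed under $\X$-vu-arrows is closed under thickened ones, since $(\forall t:\tau)\, q_{\phi t}\in U$ iff $(\forall s:\sigma)\, q_s\in U$. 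So $p\vuleq{\sigma}q_s$ implies $(\forall s:\sigma)\,q_s\in U$ for every upset $U\ni p$, hence $(\forall k:\kappa)\,q_{\psi k}\in U$. Applying the argument of \Cref{prop:Barr} to $\X$ itself produces an honest $\X$-vu-arrow $p\vuto{\mu}q_{\psi\chi m}$ for some $\chi:\mu\to\kappa$; this is valid because its proof only used closures and composition. For composition: given $p\vuleq{\sigma}q_s$ and $q_s\vuleq{\tau_s}r_{s,t}$, the same open-set criterion applies to the family $(r_{s,t})_{\sigma\cdot\tau_s}$. Ultraquantification over $\sigma\cdot\tau_s$ is iterated quantification, so every upset containing $p$ contains the $r_{s,t}$, and Barr again gives the witness. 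Thus $p\vuleq{\sigma}q_s$ in $\Tref{\X}$ holds iff every upset of $\X$ containing $p$ contains $\sigma$-all $q_s$. This characterisation is the key lemma, and it makes all the vu-poset axioms immediate.

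\emph{Tautness.} By \Cref{def:taut-poset} we must show that $p\vuleq{\tau}q_{\phi t}$ implies $p\vuleq{\sigma}q_s$. This is immediate from the definition: composing the thickenings $\psi:\kappa\to\tau$ and $\phi$ gives a thickening $\phi\psi:\kappa\to\sigma$. The characterisation above gives the same conclusion.

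\emph{Universality.} The functor $\X\to\Tref{\X}$ is the identity on objects and sends $f$ to the inequality it witnesses (with $\phi=\id$); it is a vu-functor by posetality. Let $F:\X\to X$ be a vu-functor into a taut vu-poset. A factorisation, if it exists, must agree with $F$ on objects, and maps into a vu-poset are unique once objects are fixed, which gives uniqueness. For existence, suppose $p\vuleq{\sigma}q_s$ in $\Tref{\X}$ is witnessed by $f:p\vuto{\tau}q_{\phi t}$. Then $F(p)\vuleq{\tau}F(q_{\phi t})$ in $X$, and tautness of $X$ (\Cref{def:taut-poset}) yields $F(p)\vuleq{\sigma}F(q_s)$. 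Compatibility with composition and thickening is automatic in the posetal target.

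\emph{Main obstacle.} The hard part is showing that the relation on $\Tref{\X}$ is closed under thickening and composition, since thickening-witnesses do not obviously compose. I would handle this, as above, by reducing to the open-set characterisation via the argument of \Cref{prop:Barr}.
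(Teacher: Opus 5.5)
Your tautness and universality arguments coincide with the paper's essentially verbatim. The one difference is that you also check that the relation of \Cref{def:Tref} is closed under identities, thickening and composition, which the paper leaves implicit.

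Your route through \Cref{prop:Barr} is valid. Apply it verbatim to the posetal reflection of $\X$, i.e.\ the relation ``$\Hom_\sigma(p,q_s)$ is inhabited'', which is a vu-poset because $\X$ has composites and thickenings. Note that the proof of \Cref{prop:Barr} also uses the diagonal $p\vuleq{\lambda}p$, not only closures and composition; this is available since $\X$ has thickenings. As a bonus, the route describes $\Tref{\X}$ purely through the upsets of $\X$, which is an instance of \Cref{cor:Barr-taut}.

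However, the obstacle you name as the reason for the detour is not real, and the direct check is elementary.

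For thickening, you need no genuine pullback in $\UF$, only a commuting square over the cospan $\phi:\tau\to\sigma$, $\psi:\kappa\to\sigma$. Consider the sets $(T_0\times K_0)\cap(T\times_S K)$ with $T_0$ $\tau$-large and $K_0$ $\kappa$-large. They have the finite intersection property because $\phi(T_0)\cap\psi(K_0)$ is $\sigma$-large. So \Cref{lem:extend-uf} yields $\mu$ on $T\times_S K$ whose projections $\pi_1:\mu\to\tau$, $\pi_2:\mu\to\kappa$ satisfy $\phi\pi_1=\psi\pi_2$; this is the same trick as in \Cref{lemma:taut-sober}. Then $\pi_1^*f$ witnesses $p\vuleq{\kappa}q_{\psi k}$.

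For composition, take witnesses $f:p\vuto{\tau}q_{\phi t}$ and $g_s:q_s\vuto{\kappa_s}r_{s,\psi_s k}$. They give $g_{\phi t}\circ_\tau f:p\vuto{\tau\cdot\kappa_{\phi t}}r_{\phi t,\psi_{\phi t}k}$. The assignment $(t,k)\mapsto(\phi t,\psi_{\phi t}k)$ is the morphism $(\phi\cdot\id_{\tau_s})\circ(\id_\tau\cdot\psi_{\phi t}):\tau\cdot\kappa_{\phi t}\to\sigma\cdot\tau_s$ of $\UF$. It is built from maps already appearing in the axioms of \Cref{def:vucat}, so thickening-witnesses do compose directly.
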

\begin{proof}
    Suppose $p\vuleq{\tau}q_{\phi t}$ in $\Tref{\X}$ for some thickening $\phi : \tau\to\sigma$. This means that there exists a vu-arrow $p\vuto{\lambda}q_{\psi \phi \ell}$ for some $\psi : \lambda\to\tau$, and thus $p\vuleq{\sigma}q_s$ in $\Tref{\X}$. Hence, by \Cref{def:taut-poset}, $\Tref{\X}$ is taut.

    We now show the universal property. Let $X$ be a taut vu-poset; we show that any vu-functor $\X\to X$ factorises uniquely through $\X \to \Tref{\X}$. The uniqueness follows from the fact that a functor into a vu-poset is determined by its behaviour on the objects and that $\X \to \Tref{\X}$ is surjective on objects.
    We take a vu-functor $F : \X\to X$ and we show that $F$ induces a map of vu-posets $\Tref{\X}\to X$, \ie that $p\vuleq[\Tref{\X}]{\sigma}q_s$ implies $F(p)\vuleq[X]{\sigma}F(q_{s})$.
    By definition, $p\vuleq[\Tref{\X}]{\sigma} q_s$ gives some map $\phi:\tau\to\sigma$ and some $p\vuto{\tau}q_{\phi t}$ in $\X$, hence $F(p)\vuleq[X]{\tau}F(q_{\phi t})$, and by \Cref{def:taut-poset} we have $F(p)\vuleq[X]{\sigma}F(q_{s})$.
\end{proof}

\begin{prop}\label{circular-square}
    Let $\X$ be a bounded vu-category, $\shloc(\Tref{\X}) \cong \Lref{\shvu(\X)}$.
\end{prop}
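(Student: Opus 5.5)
We need an isomorphism of frames $\Open(\Tref{\X}) \cong \Sub_{\Sh(\X)}(1)$. The plan is to describe subterminal vu-sheaves directly and match them with upsets of $\Tref{\X}$.

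The terminal vu-sheaf $1$ sends every point to a singleton. A subobject of $1$ in $\Sh(\X)=\vuCAT(\X,\Set)$ is a vu-functor $U:\X\to\Set$ with $U_p\subseteq 1$ for every $p$, together with the inclusion. Such a $U$ is determined by the collection of points $\{p : U_p\neq\emptyset\}$, since the action on vu-arrows is forced when all sets are subsingletons. The only constraint is that $U$ be functorial. For a vu-arrow $f:p\vuto{\sigma}q_s$ we need a map $U_p\to\int_{\sigma}U_{q_s}$, which exists iff $U_p\neq\emptyset$ implies $(\forall s:\sigma)\ U_{q_s}\neq\emptyset$. Compatibility with identities, composition and thickening is automatic because all hom-sets of $\Set$ into a subsingleton are subsingletons.

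Hence $\Sub_{\Sh(\X)}(1)$ is isomorphic, as a poset, to the collection of subsets $A$ of objects of $\X$ satisfying the following: if $p\in A$ and there is a vu-arrow $p\vuto{\sigma}q_s$ in $\X$, then $q_s\in A$ for $\sigma$-all $s$. It remains to compare this with the upsets of $\Tref{\X}$ (\Cref{def:open}).

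The main point is that the upsets of $\Tref{\X}$ are the same as these $\X$-upsets. One direction is immediate: any vu-arrow $p\vuto{\sigma}q_s$ gives $p\vuleq{\sigma}q_s$ in $\Tref{\X}$ via $\phi=\id$, so an upset of $\Tref{\X}$ is an $\X$-upset. Conversely, suppose $A$ is an $\X$-upset and $p\vuleq{\sigma}q_s$ in $\Tref{\X}$. Then there is some $p\vuto{\tau}q_{\phi t}$ with $\phi:\tau\to\sigma$ a map of ultrafilters, so $(\forall t:\tau)\ q_{\phi t}\in A$. Since $\phi_*\tau=\sigma$, the set of $s$ with $q_s\in A$ is $\sigma$-large, exactly as in the argument showing that topological vu-posets are taut.

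Both sides are ordered by inclusion, so this is an order isomorphism and therefore a frame isomorphism. Boundedness is only used so that $\Sh(\X)$ is a topos and $\Lref{\Sh(\X)}$ makes sense. As a byproduct, $\Open(\Tref{\X})$ is small. If naturality in $\X$ is wanted, one checks that both constructions act by taking preimages along a vu-functor $F$, which is clear. I expect no real obstacle here; the only care needed is the observation that functoriality conditions for subsingleton-valued vu-functors reduce to the upset condition.
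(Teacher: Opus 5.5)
Your proposal is correct and is essentially the paper's argument, which runs the chain $\Open(\Tref{\X})\cong\vuCat(\Tref{\X},[1])\cong\vuCat(\X,[1])\cong\Sub_{\Sh(\X)}(1)$ through the Sierpiński vu-poset $[1]$: your identification of subterminal vu-sheaves with $\X$-upsets is the last isomorphism, and your unthickening argument via $\phi_*\tau=\sigma$ is the concrete content of the middle one, namely the universal property of $\Tref{-}$ against the taut vu-poset $[1]$. The one step you assert without proof, that monomorphisms in $\Sh(\X)$ are pointwise injective, holds because finite limits of vu-sheaves are computed pointwise, since ultraproducts commute with finite limits.
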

\begin{proof}
    We denote by $[1]$ the taut vu-poset corresponding to the Sierpiński space, so that
    \[\Open(\Tref{\X})\cong\vuCat(\Tref{\X},[1])\cong\vuCat(\X,[1])\cong\Sub_{\Sh(\X)}(1) \cong \Lref{\shvu(\X)}.\qedhere\]
\end{proof}

\begin{prop}\label{prop:Tref-adjoint}
    If moreover $\X$ is bounded, we can identify $\Tref{\X}$ with a topological space, and the assignment $\X \mapsto \Tref{\X}$ extends to a 2-functor $\Tref{-} : \vuCatb \to \TopSp$ that is left adjoint to the inclusion $\Tinj : \TopSp \hookrightarrow \vuCatb$, \ie we have a 2-reflection:
    \[\begin{tikzcd}
    	\TopSp & \vuCatb.
    	\arrow[""{name=0, anchor=center, inner sep=0}, "\Tinj"',shift right, hook, from=1-1, to=1-2]
    	\arrow[""{name=1, anchor=center, inner sep=0}, "\Tref{-}"', shift right, curve={height=6pt}, from=1-2, to=1-1]
    	\arrow["\dashv"{anchor=center, rotate=-90}, draw=none, from=1, to=0]
    \end{tikzcd}\]
\end{prop}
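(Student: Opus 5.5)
We need to show three things: $\Tref{\X}$ is (up to equivalence) a topological space when $\X$ is bounded; the assignment $\X\mapsto\Tref{\X}$ is 2-functorial; and the universal property of the previous proposition upgrades to a 2-adjunction $\Tref{-}\dashv\Tinj$.

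\emph{$\Tref{\X}$ is a topological space.} By the previous proposition $\Tref{\X}$ is a taut vu-poset, so \Cref{prop:small-vu-pos} applies once we show it has only a set's worth of opens. Opens of a vu-poset $X$ correspond to vu-functors $X\to[1]$ into the Sierpiński vu-poset: a point goes to the top element exactly when it lies in the open, and the upset condition is exactly vu-functoriality. Since $[1]$ is taut, the universal property of $\Tref{\X}$ identifies these with vu-functors $\X\to[1]$. This is the chain of isomorphisms in the proof of \Cref{circular-square}, which gives
\[\Open(\Tref{\X})\cong\Sub_{\Sh(\X)}(1).\]
Boundedness of $\X$ makes $\Sh(\X)$ a Grothendieck topos, so $\Sub_{\Sh(\X)}(1)$ is a small frame. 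Hence $\Open(\Tref{\X})$ is small, and \Cref{prop:small-vu-pos} shows that $\Tref{\X}$ is essentially small and lies in the essential image of $\TopSp$.

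\emph{Inclusion and 2-functoriality.} We also need $\Tinj$ to land in $\vuCatb$. This holds because sheaves on a space give a topos, and by \Cref{reconstruction-thm-topo} vu-sheaves on the associated vu-poset are exactly sheaves on the space. For functoriality, a vu-functor $F:\X\to\Y$ composed with $\Y\to\Tref{\Y}$ is a vu-functor into a taut vu-poset. By the universal property it factors uniquely through $\X\to\Tref{\X}$, and this factorisation defines $\Tref{F}$. Uniqueness of factorisations gives functoriality on 1-cells.

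For 2-cells, note that a vu-natural transformation between vu-functors into a vu-poset is just the property that each component exists. So a 2-cell $\alpha:F\Rightarrow G$ gives, for each $p$, a $\ptuf$-arrow $F(p)\vuto{\ptuf}G(p)$ in $\Y$, and this yields $\Tref{F}(p)\vuleq{\ptuf}\Tref{G}(p)$ in $\Tref{\Y}$. The 2-cell axioms are automatic in the posetal target.

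\emph{The 2-adjunction.} I would show that precomposition with the unit $\X\to\Tref{\X}$ induces an isomorphism of hom-categories
\[\TopSp(\Tref{\X},X)\to\vuCatb(\X,\Tinj X).\]
On objects this is the bijection given by the universal property. On morphisms it is the step needing care, since both hom-categories are preorders. I must check that for $G,H:\Tref{\X}\to X$, a natural transformation $G\circ u\Rightarrow H\circ u$ exists iff one $G\Rightarrow H$ exists. Because $u$ is the identity on objects, both conditions say $G(p)\vuleq{\ptuf}H(p)$ for every point $p$, so the map on morphisms is a bijection. The counit $\Tref{\Tinj X}\to X$ is an isomorphism because a taut vu-poset equals its own topological reflection (\Cref{def:taut-poset}), which makes the adjunction a 2-reflection.

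The main obstacle is the smallness step, namely the identification of the opens of $\Tref{\X}$ with the subterminals of $\Sh(\X)$. The remaining steps are formal consequences of the universal property.
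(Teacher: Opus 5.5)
Your proposal is correct and follows the paper's proof. The paper also gets a set's worth of opens for $\Tref{\X}$ from \Cref{circular-square}, namely $\Open(\Tref{\X})\cong\Sub_{\Sh(\X)}(1)$, then applies \Cref{prop:small-vu-pos} and calls the 2-functoriality and the reflection ``straightforward''; your verification of those (universal property, hom-categories being preorders, invertible counit via \Cref{def:taut-poset}) is a correct unpacking of that remark.
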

\begin{proof}
    We suppose $\X$ bounded. By \Cref{circular-square} $\Tref{\X}$ has a set's worth of opens, and by \Cref{prop:small-vu-pos} it can be identified to a topological space.
    The 2-functoriality and the reflection are straightforward.
\end{proof}

\begin{rmk}
    Although we defined $\Tref{\X}$ for all vu-categories, we only consider it as a functor on $\vuCatb$ and not on $\vuCAT$.
    This restriction to bounded vu-categories is only here to ensure that the reflection functor lands in $\TopSp$ and not in some category of ``large'' topological spaces.
    That being said, it is worth noting that the adjunction of \Cref{prop:Tref-adjoint} is purely algebraic: $\Tref{-} : \vuCAT \to \vuPOStaut$ is left adjoint to the inclusion $\vuPOStaut \injto \vuCAT$, and that, if $X$ is a vu-poset, the reflection $X\to\Tref{X}$ can be seen as the \emph{tautification} of the vu-poset $X$.
\end{rmk}

We can picture all the functors on stage in the following square of adjunctions.
\[\begin{tikzcd}[sep = 50pt]
    \Loc & \TopSp \\
    \Topos & \vuCatb
    \arrow[""{name=0, anchor=center, inner sep=0}, "{\ptloc}"', from=1-1, to=1-2]
    \arrow[""{name=1, anchor=center, inner sep=0}, "\Linj", hook, from=1-1, to=2-1]
    \arrow[""{name=2, anchor=center, inner sep=0}, "{\shloc}"', curve={height=18pt}, dashed, from=1-2, to=1-1]
    \arrow[""{name=3, anchor=center, inner sep=0}, "\Tinj"', hook, from=1-2, to=2-2]
    \arrow[""{name=4, anchor=center, inner sep=0}, "\Lref{-}", curve={height=-18pt}, dashed, from=2-1, to=1-1]
    \arrow[""{name=5, anchor=center, inner sep=0}, "{\ptvu}", from=2-1, to=2-2]
    \arrow[""{name=6, anchor=center, inner sep=0}, "\Tref{-}"', curve={height=18pt}, dashed, from=2-2, to=1-2]
    \arrow[""{name=7, anchor=center, inner sep=0}, "{\shvu}", curve={height=-18pt}, dashed, from=2-2, to=2-1]
    \arrow["\vdash"{anchor=center, rotate=90}, draw=none, from=0, to=2]
    \arrow["\dashv"{anchor=center, rotate=0}, draw=none, from=1, to=4]
    \arrow["\vdash"{anchor=center, rotate=0}, draw=none, from=3, to=6]
    \arrow["\dashv"{anchor=center, rotate=90}, draw=none, from=5, to=7]
\end{tikzcd}\]

The solid arrow square $\ptvu(\Linj(L))\simeq\Tinj(\ptloc(L))$ and its adjoint $\Lref{\shvu(\X)} \simeq \shloc(\Tref{\X})$ commute up to some natural equivalence.
There are also two other squares mixing left and right adjoints, corresponding to the Beck--Chevalley conditions.
\[\begin{tikzcd}[sep = 40pt]
	\Loc & \TopSp \\
	\Topos & \vuCatb
	\arrow["\Linj", hook, from=1-1, to=2-1]
	\arrow["{\shloc}"', dashed, from=1-2, to=1-1]
	\arrow["\Tinj"', hook, from=1-2, to=2-2]
	\arrow["{\shvu}", dashed, from=2-2, to=2-1]
	\arrow[between={0.4}{0.6}, Rightarrow, from=2-2, to=1-1]
\end{tikzcd}
\hspace{40pt}
\begin{tikzcd}[sep = 40pt]
	\Loc & \TopSp \\
	\Topos & \vuCatb
	\arrow["{\ptloc}"', from=1-1, to=1-2]
	\arrow["\Lref{-}", dashed, from=2-1, to=1-1]
	\arrow["{\ptvu}", from=2-1, to=2-2]
	\arrow["\Tref{-}"', dashed, from=2-2, to=1-2]
	\arrow[between={0.4}{0.6}, Rightarrow, from=2-2, to=1-1]
\end{tikzcd}\]

The first map $\shvu(\Tinj(T))\to\Linj(\shloc(T))$ is given by its inverse image, sending a sheaf over $T$ to the vu-functor $\Tinj(T)\to\Set$ given on objects by the germs of the sheaf, and \Cref{reconstruction-thm-topo} says precisely that this map is an equivalence of categories.

The second map $\Tref{\ptvu(\E)} \to \ptloc(\Lref{\E})$ maps points of $\E$ to their image in $\Lref{\E} $. It has been shown in \cite[Lem. 5.3]{SUJ} that it is always fully faithful. 
Concretely, if $p \vuleq{\sigma} q_s$ in $\pt(\Lref{\E} )$ then there exists a map $p \vuto{\tau} q_{\phi t}$ in $\vupt(\E)$ for some $\phi : \tau \to \sigma$.

\begin{prop}\label{prop:rpt=ptr}
    The vu-functor between vu-posets $\Tref{\ptvu(\E)} \to \ptloc(\Lref{\E})$ is fully faithful.
\end{prop}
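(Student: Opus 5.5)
Since both sides are vu-posets, full faithfulness means the following. For points $p$ and $(q_s)_{s:\sigma}$ of $\E$, if $p \vuleq{\sigma} q_s$ holds in $\ptloc(\Lref{\E})$, then it already holds in $\Tref{\ptvu(\E)}$. That is, there should exist a thickening $\phi:\tau\to\sigma$ and a vu-arrow $p\vuto{\tau}q_{\phi t}$ in $\pt(\E)$. (Faithfulness is automatic for vu-posets, and the converse implication is just functoriality.) The plan is to reduce this to \Cref{prop:Barr} applied to the vu-poset $\Tref{\ptvu(\E)}$.

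\textbf{Step 1: unfold the hypothesis.} Points of $\Lref{\E}$ are points of the locale $\Sub_\E(1)$, and the relation $\vuleq{\sigma}$ in $\ptloc(\Lref{\E})$ is convergence in the corresponding topological space. So $p\vuleq{\sigma}q_s$ means: for every subterminal $U\in\Sub_\E(1)$ with $p^*U=1$, we have $q_s^*U=1$ for $\sigma$-all $s$.

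\textbf{Step 2: compare opens.} By \Cref{circular-square} with $\X=\pt(\E)$, the opens of $\Tref{\pt(\E)}$ correspond to subterminals of $\Sh(\pt(\E))$. To reach $\E$ itself without assuming enough points, I would argue directly instead. An open of $\Tref{\ptvu(\E)}$ is an upset, i.e.\ a vu-functor $\pt(\E)\to[1]$, i.e.\ a subterminal vu-sheaf on $\pt(\E)$. The key claim is that every such upset is of the form $\{x : x^*U=1\}$ for some $U\in\Sub_\E(1)$. Granting this, the hypothesis of Step 1 says exactly that every open of $\Tref{\ptvu(\E)}$ containing $p$ contains $\sigma$-all $q_s$.

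\textbf{Step 3: conclude.} Apply \Cref{prop:Barr} to the vu-poset $\Tref{\ptvu(\E)}$. It gives $p\vuleq{\tau}q_{\phi t}$ in $\Tref{\ptvu(\E)}$ for some thickening $\phi$. Since $\Tref{\ptvu(\E)}$ is taut, \Cref{def:taut-poset} yields $p\vuleq{\sigma}q_s$ there, as required.

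\textbf{Main obstacle.} The hard part is the key claim in Step 2, that upsets of $\pt(\E)$ are cut out by subterminals of $\E$. It is immediate from the reconstruction theorem when $\E$ has enough points, but not in general. If it fails, I would work with the restriction $\E'$ of $\E$ to its points, i.e.\ the largest subtopos with enough points. Then $\pt(\E')=\pt(\E)$, and the hypothesis for $\Sub_\E(1)$ should transfer to $\Sub_{\E'}(1)$ because every subterminal of $\E'$ is a restriction of one in $\E$ along the embedding. This gives a surjection on subterminals $\Sub_\E(1)\to\Sub_{\E'}(1)$, so testing membership of $p$ and $q_s$ against $\Sub_{\E'}(1)$ is equivalent to testing against $\Sub_\E(1)$. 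That reduces the claim to the enough-points case, where it follows from \Cref{thm:reconstruction} and \Cref{circular-square}.
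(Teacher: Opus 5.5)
Your argument is correct, but it does not follow the paper: the paper gives no proof of its own here and imports the statement from \cite[Lem.~5.3]{SUJ}. You derive it inside the paper's framework instead. Once the upsets of $\Tref{\pt(\E)}$ are identified with the classes $\{x : x^*U=1\}$ for $U\in\Sub_\E(1)$, the proposition is exactly \Cref{prop:Barr} followed by tautness of $\Tref{\pt(\E)}$. By \Cref{rmk:Barr-categorified}, \Cref{prop:Barr} is the terminal-diagram case of \Cref{prop:Barr-categorified}.

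What your route buys is transparency: it shows the statement is Barr's theorem in disguise, and it uses only results the paper already proves or imports. What it costs is that the genuine content is delegated to \Cref{thm:reconstruction} and to the existence of the largest subtopos with enough points. That content is the fact that every class of points of $\E$ closed under vu-arrows is cut out by a subterminal. So your argument is not an independent proof of the cited lemma, and it could not replace that lemma in a development where reconstruction is proved using it. Within this paper there is no circularity, since neither \Cref{prop:Barr} nor \Cref{circular-square} relies on the proposition.

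Your ``main obstacle'' is not actually an obstacle, and you can drop the hedging. Write $i:\E'\hookrightarrow\E$ for the largest subtopos with enough points. This is the counit of the adjunction in \Cref{thm:reconstruction}, so the triangle identity shows that every point of $\E$ factors through $i$. Hence $\pt(i)$ is essentially surjective, and it is fully faithful by \Cref{cor:emb-ff}. For $V\in\Sub_{\E'}(1)$ one has $V\cong i^*i_*V$ with $i_*V\in\Sub_\E(1)$. Therefore $x'^*V=x^*(i_*V)$ whenever $x=ix'$. Consequently the key claim of your Step 2 holds for arbitrary $\E$ unconditionally, not merely as a fallback.
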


\begin{rmk}
    Note however that $\Tref{\ptvu(\E)} \to \ptloc(\Lref{\E})$ is not necessarily an equivalence: a point of $\Lref{\E}$ does not necessarily come from a point of $\E$. For example, $\E$ may be a hyperconnected topos (so $\pt(\Lref{\E}) = 1$) with no points (so $\Tref{\ptvu(\E)}$ is the empty space). The existence of such a topos follows from \cite[3.4.14 and 3.5.4]{johnstone:elephant}.
\end{rmk}

\section{Easy preservation results}\label{sec:ff}

The goal of the paper is to give vu-categorical analogues of the usual classes of geometric morphisms between toposes. In this section we begin with some easy preservation results. First we show how étale geometric morphisms and étale vu-functors relate (\Cref{thm:etale}). Then we show that if a geometric morphism is an embedding (resp. localic) then it is fully faithful (resp. faithful) on its points (\Cref{cor:emb-ff} and \Cref{thm-loc}).

\subsection{Étale vu-functors}\label{subsec:etale}

We recall that étale geometric morphisms are geometric morphisms of the form $f : \E/X\to\E$ for some $X\in\E$.  
%
%
A vu-counterpart of étale geometric morphisms has already been introduced in \cite[Section 4]{SUJ}. 

\begin{defn}
    A vu-functor $F : \A\to\X$ is said \emph{étale} if all its fibres $F^{-1}\{p\}\coloneqq \{a\in\A \ | \ F(a) = p\}$ are small, and if any $g : F(a)\vuto{\sigma}q_s$ has a unique lift, \ie there exists a unique vu-arrow $f : a\vuto{\sigma}b_s$ in $\A$ such that $F(f) = g$. We will write $g_*(a)$ or $g_*a$ for the family $(b_s)_{s:\sigma}$.
\end{defn}

We also recall from \cite[Section 4]{SUJ} that vu-sheaves over $\X$ can be seen via their total space.
From a vu-sheaf $A : \X\to\Set$ we can construct an étale vu-functor $\A\to\X$ with $\A$ the \emph{total space} of the vu-sheaf $A$.
The objects of $\A$ are pairs $(p\in \X, \xi\in A(p))$, $\sigma$-arrows from $(p,\xi)$ to $(q_s,\zeta_s)_{s:\sigma}$ are $f\in\Hom_\sigma(p,q_s)$ such that $f_*\xi = (\zeta_s)_{s:\sigma}$, and the vu-functor $\A\to\X$ is given by the obvious projection.
This construction induces an equivalence from the category of vu-sheaves $\Sh(\X)$ to the category of étale vu-functors above $\X$; we refer to \cite[Theorem 4.15]{SUJ} for a detailed proof.
This construction can be seen as the vu-categorical analogue of the Grothendieck construction relating copresheaves and discrete opfibrations over a category, but also as the categorification of the correspondence between sheaves over a topological space and local homeomorphisms above it.

\begin{nota}
    The total space of a vu-sheaf $A,B,\dots$ above $\X$ will be systematically denoted by the corresponding letter $\A,\B,\dots$, or by $\et{A}, \et{B}, \dots$ if we need to be explicit.
\end{nota}

\begin{lem}\label{lem:slice-etale}
    Let $F : \B\to\A$ and $G:\A\to\X$ be two vu-functors,
    \begin{enumerate}
        \item if $F$ and $G$ are étale, so is $G\circ F$,
        \item if $G\circ F$ and $G$ are étale, so is $F$.
    \end{enumerate}
\end{lem}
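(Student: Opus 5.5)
Both parts follow directly from the definition of étale vu-functors: small fibres, plus unique lifting of every vu-arrow out of an image point. We check these two conditions separately. We never need the vu-functoriality laws beyond the facts that $G\circ F$ acts on arrows by $G(F(-))$ and that vu-functors preserve domains and codomain families.

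For (i), fibres first. The fibre $(G\circ F)^{-1}\{p\}$ is the union, over $a\in G^{-1}\{p\}$, of the fibres $F^{-1}\{a\}$. This is a small union of small sets, hence small. For unique lifting, take $b\in\B$ and $g : G(F(b))\vuto{\sigma}q_s$ in $\X$. Étaleness of $G$ gives a unique $h : F(b)\vuto{\sigma}a_s$ in $\A$ with $G(h)=g$. Étaleness of $F$ then gives a unique $f : b\vuto{\sigma}c_s$ with $F(f)=h$, so $G(F(f))=g$. For uniqueness, suppose $f'$ also satisfies $G(F(f'))=g$. Then $F(f')$ is a lift of $g$ at $F(b)$, so $F(f')=h$ by uniqueness for $G$. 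Hence $f'=f$ by uniqueness for $F$.

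For (ii), fibres first. For $a\in\A$, the fibre $F^{-1}\{a\}$ is contained in $(G\circ F)^{-1}\{G(a)\}$, which is small. For unique lifting, take $b\in\B$ and $h : F(b)\vuto{\sigma}a_s$ in $\A$. Apply $G$ to get $G(h) : G(F(b))\vuto{\sigma}G(a_s)$. Étaleness of $G\circ F$ yields a unique $f : b\vuto{\sigma}c_s$ with $G(F(f))=G(h)$. Now $F(f)$ and $h$ are both vu-arrows out of $F(b)$ in $\A$ with the same image under $G$. Uniqueness of lifts for $G$ therefore forces $F(f)=h$, and in particular $(F(c_s))_{s:\sigma}=(a_s)_{s:\sigma}$. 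For uniqueness, any $f'$ with $F(f')=h$ satisfies $G(F(f'))=G(h)$, so $f'=f$.

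There is no real obstacle. The one point needing care is in (ii): we must use uniqueness of lifts for $G$ to identify $F(f)$ with $h$, not just to match their images in $\X$. Note also that equality of codomain families is equality in the ultrapower, and this is automatic once the vu-arrows themselves are equal.
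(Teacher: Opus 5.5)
Your proof is correct and follows the paper's argument: the fibre decomposition and composed lifts for (i), and for (ii) the inclusion $F^{-1}\{a\}\subseteq (G\circ F)^{-1}\{G(a)\}$ together with lifting $G(h)$ along $G\circ F$. You also spell out a step the paper leaves implicit, namely that uniqueness of lifts along $G$ is what forces $F(f)=h$.
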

\begin{proof}\leavevmode
    \begin{enumerate}
        \item The fibre $(G\circ F)^{-1}\{p\} = \bigsqcup_{a\in G^{-1}\{p\}} F^{-1}\{a\}$ is small as a small union of small sets, and the unique lifting properties compose well.
        \item The fibre $F^{-1}\{a\}$ is small as a subset of $(G\circ F)^{-1}\{F(a)\}$, and the unique lifting of some $f : F(x)\vuto{\sigma} b_s$ is given by the unique lifting of $G(f) : G\circ F(x)\vuto{\sigma}F(b_s)$ along $G\circ F$.\qedhere
    \end{enumerate}
\end{proof}

We present now our very first result relating geometric morphisms and vu-functors.

\begin{Thm}\label{thm:etale}\leavevmode
    \begin{enumerate}
        \item\label{prop:slice-etale} Let $A\in\Sh(\X)$ be a vu-sheaf corresponding to $F : \A\to\X$ étale. Then $\Sh(F) : \Sh(\A)\to\Sh(\X)$ can be identified with the étale geometric morphism $\Sh(\X)/A\to\Sh(\X)$.
        
        \item\label{prop:point-etale} Let $\E/X\to\E$ be an étale geometric morphism, then $\pt(\E/X)\to\pt(\E)$ is the étale vu-functor corresponding to the vu-sheaf $\ev(X)\in\Sh(\pt(\E))$.
    \end{enumerate}
\end{Thm}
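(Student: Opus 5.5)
For \ref{prop:slice-etale}, we compare vu-sheaves on $\A$ with objects of $\Sh(\X)/A$. By \cite[Theorem 4.15]{SUJ}, $\Sh(\A)$ is equivalent to the category of étale vu-functors $\B\to\A$. By \Cref{lem:slice-etale}, composing with $F$ identifies these with the étale vu-functors $\B\to\X$ equipped with a morphism to $F$ over $\X$: composites of étale maps are étale, and any map over $\X$ between étale vu-functors is itself étale. Using \cite[Theorem 4.15]{SUJ} again, this category is $\Sh(\X)/A$. This gives an equivalence $\Sh(\A)\simeq\Sh(\X)/A$.

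It then remains to identify $\Sh(F)$ with the étale morphism under this equivalence. We do this on inverse images. The inverse image of $\Sh(F)$ is precomposition $B\mapsto B\circ F$. The inverse image of $\Sh(\X)/A\to\Sh(\X)$ is $B\mapsto B\times A\to A$. On total spaces, the pullback of $\B\to\X$ along $F$ is the total space of $B\circ F$: its objects are pairs $(a,\xi\in B(F a))$, and its vu-arrows are the lifts, which are unique by étaleness. The same pullback, viewed over $\X$, is the total space of $B\times A$. These identifications are natural, which completes the first part.

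For \ref{prop:point-etale}, we first treat objects. A point of $\E/X$ is the same as a point $p$ of $\E$ together with a global element of $p^*X$, that is, an element $\xi\in p^*X$. For fixed $p$ these form the set $p^*X$, which is small, so the fibres of $\pt(\E/X)\to\pt(\E)$ are small. Objects of $\pt(\E/X)$ therefore match objects of the total space of $\ev(X)$.

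Next we treat vu-arrows. The inverse image of $(p,\xi)$ sends $E\to X$ to the fibre of $p^*E\to p^*X$ over $\xi$. A vu-arrow $(p,\xi)\vuto{\sigma}(q_s,\zeta_s)$ in $\pt(\E/X)$ is therefore a natural family of maps between these fibres. Given $g:p\vuto{\sigma}q_s$ in $\pt(\E)$ with $g_*\xi=(\zeta_s)$, naturality of $g$ with respect to $E\to X$ makes $g$ restrict to these fibres. Conversely, a natural family on fibres assembles into a natural transformation $p^*\Rightarrow\int_\sigma q_s^*$. To see this, write each $E$ as the object $E\times X\to X$ of $\E/X$; by naturality, the family is determined by the summands indexed by elements of $p^*X$, compatibly along the diagonal section.

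The main obstacle is this last converse: showing that every vu-arrow of $\pt(\E/X)$ arises from a unique vu-arrow of $\pt(\E)$. The cleanest route is probably to use the fact that $(\E/X)$-points over $p$ factor through $p^*$ as described, and to check that restriction to fibres over $\xi$ is a bijection via the projection $E\times X\to X$. Once this holds, the unique-lift condition is exactly étaleness of $\pt(\E/X)\to\pt(\E)$, and the vu-functor is identified with the total space of $\ev(X)$.
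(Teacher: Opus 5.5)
Your proposal is correct and follows essentially the same route as the paper. Part (i) goes through the equivalence of \cite[Theorem 4.15]{SUJ} between vu-sheaves and étale vu-functors combined with \Cref{lem:slice-etale}, and part (ii) describes the points of $\E/X$ as pairs $(p,\xi)$ with $\xi\in p^*X$ and the vu-arrows $(p,\xi)\vuto{\sigma}(q_s,\zeta_s)$ as those $f : p\vuto{\sigma}q_s$ with $f_X(\xi)=(\zeta_s)_{s:\sigma}$. Your inverse-image check in (i) and your $E\times X\to X$ argument in (ii) spell out details the paper leaves as following easily, and the sketched converse goes through: for a fibrewise family $h$, set $g_E \coloneqq h_{E\times X\to X}$; the diagonal $X\to X\times X$ then gives $g_X(\xi)=(\zeta_s)_{s:\sigma}$, and the graph $(\id,e) : E\to E\times X$ shows that $h$ is the restriction of $g$.
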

\begin{proof}\leavevmode
    \begin{enumerate}
        \item 
        The functor $\Sh(\A)\to\Sh(\X)/A$ sends an étale map $\Y\to\A$ to the composite $\Y\to\A\to\X$ that is étale by \Cref{lem:slice-etale}. 
        Conversely, from an object in $\Sh(\X)/A$ we get an étale vu-functor $\B\to\X$ together with a vu-functor $\B\to\A$ above $\X$, and by \Cref{lem:slice-etale} $\B\to\A$ is étale, thus giving a vu-sheaf over $\A$.
        
        \item We only describe $\pt(\E/X)$, the rest follows easily. The objects of $\pt(\E/\X)$ are pairs $(p,\xi)$ of a point $p$ of $\E$ with a germ $\xi\in p^*X$, and $\sigma$-arrows from $(p,\xi)$ to $(q_s,\zeta_s)$ are given by $f\in\Hom^{\pt(\E)}_\sigma(p,q_s)$, that is natural transformations $f : p^*\Rightarrow\int_{s:\sigma}({q_s}^*)$, such that $f_X(\xi) = (\zeta_s)_{s:\sigma}$.\qedhere
    \end{enumerate}
\end{proof}

We can now fill the first line in the table (\ref{tableau}), summarising the relation between étale vu-functors and étale geometric morphisms.
\begin{cor}\label{table:etal}
    \ 
    
    \centerline{
    \begin{tabular}{r l||C C|C C|C C|}
    & & \multicolumn{2}{c|}{$f\squig\pt(f)$}&
        \multicolumn{2}{c|}{$\Sh(F)\squig F$}&
        \multicolumn{2}{c|}{$f\squig F$}\\
    \cline{3-8}
    geometric morphisms & vu-functors & $\Rightarrow$ & $\Leftarrow$ & $\Rightarrow$ & $\Leftarrow$ & $\Rightarrow$ & $\Leftarrow$ \\
    \cline{1-8}
    étale & étale
        & \checkmark & \checkmark 
        & $\crossmark$ & \checkmark
        & $\crossmark$ & \checkmark
    \end{tabular}}
\end{cor}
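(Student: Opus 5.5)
The table has six entries. I will take the two checkmarks in the first column from \Cref{thm:etale}, derive the remaining checkmarks from them, and give explicit counterexamples for the two crosses.

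\textbf{First column ($f\squig\pt(f)$), both directions.}
For ($\Rightarrow$), take $f:\E/X\to\E$ étale. Part \ref{prop:point-etale} of \Cref{thm:etale} says directly that $\pt(f)$ is the étale vu-functor corresponding to $\ev(X)$.

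For ($\Leftarrow$), suppose $f:\F\to\E$ is a morphism between toposes with enough points and $\pt(f)$ is étale. Then $\pt(f)$ is the total space of some vu-sheaf $A\in\Sh(\pt(\E))$. Since $\E$ has enough points, the counit $\ev$ is an equivalence, so $A\cong\ev(X)$ for some $X\in\E$. Part \ref{prop:point-etale} then identifies $\pt(f)$ with $\pt(\E/X\to\E)$ over $\pt(\E)$. Because $\pt$ is $2$-fully faithful on $\Toposwep$, and $\E/X$ has enough points, this gives $f\simeq(\E/X\to\E)$.

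\textbf{Second column ($\Sh(F)\squig F$).}
For ($\Leftarrow$), if $F$ is étale, part \ref{prop:slice-etale} of \Cref{thm:etale} identifies $\Sh(F)$ with $\Sh(\X)/A\to\Sh(\X)$, which is étale.

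For the cross in ($\Rightarrow$), I would use the unit. Take $\X$ bounded with $\eta_\X$ not an equivalence. Two natural choices are \Cref{counterex} with $F=\eta_\X$, or the inclusion of a discrete space into its sobrification. Then $\Sh(\eta_\X)$ is an equivalence, hence étale (it is the slice over $1$). But $\eta_\X$ is not étale: an étale vu-functor over $\X$ with $\Sh$ an equivalence would be an equivalence, since it corresponds to the terminal vu-sheaf. In \Cref{counterex} this is visible directly: $\eta_\X$ collapses the different vu-arrows $a\vuto{\sigma}b$, so the lift of a vu-arrow is not unique.

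\textbf{Third column ($f\squig F$).}
Here $f:\Sh(\X)\to\E$ and $F=\pt(f)\circ\eta_\X$.

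For ($\Leftarrow$), if $F$ is étale it corresponds to some $A\in\Sh(\X)$, and $f=\epsilon\circ\Sh(F)$. Using the reconstruction adjunction, I would show $A\cong f^*X$ where $X$ is the object corresponding to $F_!$ of the terminal sheaf. Equivalently, $F$ is the pullback of $\pt(\E/X)\to\pt(\E)$ along the adjunct.

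This step is the main obstacle. The plan is to check that étale vu-functors over $\pt(\E)$ from $\pt(\E/X)$ pull back along $F$ to $\et{F^*\ev X}$. Combined with the fact that $\Sh$ of a total space is a slice, this identifies $f$ with $\Sh(\X)/A\to\Sh(\X)\simeq\E$-side data.

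For the cross in ($\Rightarrow$), the same counterexample works. Take $\E=\Sh(\X)$ and $f=\id$; its adjunct is $F=\eta_\X$, which is not étale, while $f$ is étale.
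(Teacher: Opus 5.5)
\textbf{Five entries are fine; the gap is the third-column ($\Leftarrow$) entry.}

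Five of the six entries follow the paper's route. The first column and the second-column ($\Leftarrow$) come from \Cref{thm:etale} plus the reconstruction theorem. Both crosses come from the unit $\eta_\X$ of \Cref{counterex}: your $f=\id_{\Sh(\X)}$ with adjunct $\eta_\X$ is the paper's equivalence $\Sh(\X)\to\Set^{[1]}$ with adjunct $\X\to[1]$.

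Two small slips there:
\begin{itemize}
\item In your general argument, the étale vu-functor lives over $\pt(\Sh(\X))$, not over $\X$.
\item A discrete space is Hausdorff, hence sober, so its inclusion into its sobrification is an isomorphism and is \emph{not} a counterexample. You need a non-sober space, or just \Cref{counterex}.
\end{itemize}

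The genuine gap is the third-column ($\Leftarrow$) entry, which you flag as the main obstacle and never prove. Your sketch starts from a wrong identification. An étale $F:\X\to\pt(\E)$ is the total space of a vu-sheaf on its \emph{codomain} $\pt(\E)$, not of some $A\in\Sh(\X)$. As a result, ``$A\cong f^*X$'' and ``pullback of $\pt(\E/X)\to\pt(\E)$ along the adjunct'' do not typecheck. Your last sentence only produces the étale morphism $\Sh(\X)/A\to\Sh(\X)$, which lands in $\Sh(\X)$. What you must show is that $f:\Sh(\X)\to\E$ is étale, and the sketch never reaches that.

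There is in fact no obstacle, since you already wrote $f\cong\varepsilon_\E\circ\Sh(F)$. The reconstruction theorem makes $\pt(\E)$ bounded. Because $\E$ has enough points, the counit $\varepsilon_\E:\Sh(\pt(\E))\to\E$ is an equivalence. Your own second-column argument (\Cref{thm:etale}\ref{prop:slice-etale}) shows $\Sh(F)$ is étale. An étale morphism followed by an equivalence is étale, so $f$ is étale.

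Alternatively, reuse your first-column argument. Write $F\simeq\et{A}$ with $A\in\Sh(\pt(\E))$, so $A\cong\ev(X)$ for some $X\in\E$. By \Cref{thm:etale}\ref{prop:point-etale}, $\X\simeq\pt(\E/X)$ over $\pt(\E)$. Hence $\Sh(\X)\simeq\E/X$ over $\E$, and $f$ is identified with the étale morphism $\E/X\to\E$.
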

\begin{proof}
    All \checkmark can be deduced from the reconstruction theorem (\Cref{thm:reconstruction}) and \Cref{thm:etale}. 
    For both counterexamples, take $\X$ from \Cref{counterex}, then the unit $F : \X\to\ [1]$ is not étale, although $f : \Sh(\X)\to\Set^{[1]}$ is an equivalence of toposes and so in particular is étale.
\end{proof}

\subsection{Embeddings}

We recall that a geometric morphism $j : \E \to \F$ is an \emph{embedding} if the right adjoint $j_* : \E\to\F$ is fully faithful. We can abstract the categorical result needed to show that $\pt(j)$ is then fully faithful in the following lemma.
\begin{lem}\label{prop:fully faithful general}
    Let $L : \D \to \C$ be a functor between categories having a fully faithful right adjoint, and let $F,G : \C \to \A$ be two functors to another category $\A$. For any natural transformation $\beta : FL \Rightarrow GL$ there exists a unique natural transformation $\alpha : F \Rightarrow G$ such that $\alpha L = \beta$. 
\[\begin{tikzcd}
	\D && \C && \A
	\arrow[""{name=0, anchor=center, inner sep=0}, "L", from=1-1, to=1-3]
	\arrow[""{name=1, anchor=center, inner sep=0}, shift left=3, curve={height=-30pt}, from=1-1, to=1-5]
	\arrow[""{name=2, anchor=center, inner sep=0}, shift right=3, curve={height=30pt}, from=1-1, to=1-5]
	\arrow[""{name=4, anchor=center, inner sep=0}, "F"{description}, shift left=2, curve={height=-12pt}, from=1-3, to=1-5]
	\arrow[""{name=5, anchor=center, inner sep=0}, "G"{description}, shift right=2, curve={height=12pt}, from=1-3, to=1-5]
	\arrow["{\forall \beta}"{pos=0.2}, shift right=8, between={0.1}{0.9}, crossing over, Rightarrow, from=1, to=2]
	\arrow["{\exists!\alpha}", between={0.2}{0.8}, Rightarrow, dashed, from=4, to=5]
\end{tikzcd}\]
\end{lem}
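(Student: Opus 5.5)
Let $R : \C \to \D$ be the fully faithful right adjoint of $L$, with unit $\eta : \id_\D \Rightarrow RL$ and counit $\varepsilon : LR \Rightarrow \id_\C$. Full faithfulness of $R$ is equivalent to $\varepsilon$ being a natural isomorphism. The plan is to transport $\beta$ along $\varepsilon$: whiskering $\beta$ by $R$ gives $\beta R : FLR \Rightarrow GLR$. We then define
\[\alpha \coloneqq G\varepsilon \circ \beta R \circ F\varepsilon^{-1} : F \Rightarrow G.\]
This is natural, being a composite of natural transformations.

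\textbf{Uniqueness.} Naturality of $\alpha$ with respect to $\varepsilon$ gives $G\varepsilon \circ \alpha LR = \alpha \circ F\varepsilon$, so $\alpha = G\varepsilon\circ \alpha LR \circ F\varepsilon^{-1}$. Hence $\alpha$ is determined by $\alpha L$. If $\alpha L = \beta$, then necessarily $\alpha$ equals the formula above.

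\textbf{Existence.} We must check $\alpha L = \beta$, which is the step requiring care. At an object $d \in \D$ we have
\[\alpha_{Ld} = G\varepsilon_{Ld}\circ \beta_{RLd}\circ F\varepsilon_{Ld}^{-1}.\]
By the triangle identity $\varepsilon_{Ld}\circ L\eta_d = \id_{Ld}$. Since $\varepsilon_{Ld}$ is invertible, this gives $\varepsilon_{Ld}^{-1} = L\eta_d$. Naturality of $\beta$ applied to $\eta_d : d \to RLd$ gives
\[\beta_{RLd}\circ FL\eta_d = GL\eta_d\circ\beta_d.\]
Combining these,
\[\alpha_{Ld} = G\varepsilon_{Ld}\circ GL\eta_d\circ \beta_d = G(\varepsilon_{Ld}\circ L\eta_d)\circ\beta_d = \beta_d,\]
where the last equality is the triangle identity again.

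The only point needing care is the identification $\varepsilon_{L}^{-1} = L\eta$. This is immediate from the triangle identity once $\varepsilon$ is known to be invertible, and that invertibility is exactly where the hypothesis that $R$ is fully faithful is used.
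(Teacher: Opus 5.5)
Your proof is correct and follows the paper's argument essentially verbatim. Uniqueness comes from the naturality square of $\alpha$ at the invertible counit $\varepsilon$, which forces $\alpha = G\varepsilon\circ\beta R\circ(F\varepsilon)^{-1}$, and existence comes from combining naturality of $\beta$ at $\eta_d$ with the triangle identity; your explicit identification $\varepsilon_{Ld}^{-1} = L\eta_d$ is simply the algebraic form of the paper's diagram-pasting step.
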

\begin{proof}
    We denote $R:\C\hookrightarrow\D$ the right adjoint of $L$.
    
    Suppose such an $\alpha$ exists. Since $R$ is fully faithful, the counit $\varepsilon_C : L R(C) \to C$ is an isomorphism. The naturality square it induces forces $\alpha$ to be $(G\varepsilon) \circ (\beta R) \circ (F\varepsilon)^{-1}$, thus $\alpha$ is unique.
    \[\begin{tikzcd}
    	{F(L R(C))} & {F(C)} \\
    	{G(L R(C))} & {G(C)}
    	\arrow["{F\varepsilon_C}", from=1-1, to=1-2]
    	\arrow[""{name=0, anchor=center, inner sep=0}, "{\alpha_{L R(C)}}", shift left=2, from=1-1, to=2-1]
    	\arrow[""{name=1, anchor=center, inner sep=0}, "{\beta_{R(C)}}"', shift right=2, from=1-1, to=2-1]
    	\arrow["{\alpha_C}", from=1-2, to=2-2]
    	\arrow["{G\varepsilon_C}"', from=2-1, to=2-2]
    	\arrow[between={0.3}{0.7}, equals, from=0, to=1]
    \end{tikzcd}\]
    
    Of course, the above square can just as well be taken as a definition of $\alpha$, which would settle existence provided that this definition satisfies $\alpha_{L D} = \beta_D$ for each $D \in \D$. For this, one needs only to paste the defining square for $\alpha_{L D}$ with the naturality square of $\beta$ at the unit $\eta_D$, noting that the long edges are identity morphisms due to the triangle identities of $L \dashv R$.
    \[\begin{tikzcd}
	{FL(D)} & {FLRL(D)} & {FL(D)} \\
	{GL(D)} & {GLRL(D)} & {GL(D)}
	\arrow["{FL\eta_D}", from=1-1, to=1-2]
	\arrow["{=}"{description}, curve={height=-22pt}, dotted, no head, from=1-1, to=1-3]
	\arrow["{\beta_D}"', from=1-1, to=2-1]
	\arrow["{F\varepsilon_{L D}}", from=1-2, to=1-3]
	\arrow["{\beta_{RL D}}"{description}, from=1-2, to=2-2]
	\arrow["{\alpha_{L D}}", from=1-3, to=2-3]
	\arrow["{GL\eta_D}"', from=2-1, to=2-2]
	\arrow["{=}"{description}, curve={height=22pt}, dotted, no head, from=2-1, to=2-3]
	\arrow["{G\varepsilon_{L D}}"', from=2-2, to=2-3]
    \end{tikzcd}\qedhere\]
\end{proof}

\begin{Thm} \label{cor:emb-ff}
    If $j : \E \to \F$ is an embedding of toposes, then $\pt(j) : \pt(\E) \to \pt(\F)$ is fully faithful. 
\end{Thm}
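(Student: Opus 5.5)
Since $\pt(j)$ acts by precomposition with $j^*$, we apply \Cref{prop:fully faithful general} with $L = j^* : \F \to \E$. This functor has the fully faithful right adjoint $j_*$, because $j$ is an embedding. Points of $\pt(\E)$ are geometric morphisms $p : \Set\to\E$, and $\pt(j)$ sends $p$ to $jp$, whose inverse image is $p^*j^*$. A vu-arrow $p\vuto{\sigma}q_s$ in $\pt(\E)$ is a natural transformation $p^*\Rightarrow\int_{s:\sigma}q_s^*$ between functors $\E\to\Set$. Its image under $\pt(j)$ is the whiskered transformation $p^*j^*\Rightarrow\big(\int_{s:\sigma}q_s^*\big)j^* = \int_{s:\sigma}(q_s^*j^*)$.

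The plan is as follows. First, fix points $p$ and $(q_s)_{s:\sigma}$ of $\E$. Then apply \Cref{prop:fully faithful general} with $\D=\F$, $\C=\E$, $\A=\Set$, $L=j^*$, $F=p^*$ and $G=\int_{s:\sigma}q_s^*$. The lemma says that whiskering by $j^*$ gives a bijection
\[\Hom^{\pt(\E)}_\sigma(p,q_s)=\mathrm{Nat}\Big(p^*,\textstyle\int_{s:\sigma}q_s^*\Big)\to\mathrm{Nat}\Big(p^*j^*,\textstyle\int_{s:\sigma}q_s^*j^*\Big)=\Hom^{\pt(\F)}_\sigma(jp,jq_s).\]
This is precisely the action of $\pt(j)$ on homsets, so $\pt(j)$ is fully faithful.

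The one step needing care is identifying the action of $\pt(j)$ on vu-arrows with whiskering by $j^*$. This requires the equality $\big(\int_{s:\sigma}q_s^*\big)\circ j^* = \int_{s:\sigma}(q_s^*j^*)$. It holds on the nose, since the ultraproduct is computed objectwise, so whiskering commutes with taking ultraproducts of functors. The identification is therefore immediate from the construction of $\pt$ on 1-cells (\cite[4.2(iii)]{Saa}). Note that we need only the 1-categorical structure here: full faithfulness is a statement about the homsets alone, so compatibility with composition and thickening is not needed.
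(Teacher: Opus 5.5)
Your proposal is correct and follows the paper's proof exactly: both apply \Cref{prop:fully faithful general} to the adjunction $j^* \dashv j_*$ with $F = p^*$ and $G = \int_{s:\sigma} q_s^*$, so that whiskering by $j^*$ gives the required bijection on homsets. Your extra remark identifying the action of $\pt(j)$ on vu-arrows with whiskering, via $\big(\int_{s:\sigma} q_s^*\big) j^* = \int_{s:\sigma}(q_s^* j^*)$, is a detail the paper leaves implicit.
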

\begin{proof}
    Let $p$ and $(q_s)_{s:\sigma}$ be points of $\E$, we want to show that
    \[\Hom^{\pt(\E)}_\sigma(p, q_s) \to \Hom^{\pt(\F)}_\sigma(jp, jq_s)\]
    is a bijection. This follows from applying \Cref{prop:fully faithful general} to the adjunction $j^* \dashv j_* : \E \leftrightarrows \F$ with $F$ being $p^* : \E \to \Set$ and $G$ being $\int_\sigma({q_s}^*) : \E \to \Set$.
\end{proof}

We can summarise the relation between fully faithful vu-functors and embeddings of toposes as follows, thus filling a line in the table (\ref{tableau}).

\begin{cor}\label{table:emb}
    \
    
    \centerline{
    \begin{tabular}{r l||C C|C C|C C|}
    & & \multicolumn{2}{c|}{$f\squig\pt(f)$}&
        \multicolumn{2}{c|}{$\Sh(F)\squig F$}&
        \multicolumn{2}{c|}{$f\squig F$}\\
    \cline{3-8}
    geometric morphisms & vu-functors & $\Rightarrow$ & $\Leftarrow$ & $\Rightarrow$ & $\Leftarrow$ & $\Rightarrow$ & $\Leftarrow$ \\
    \cline{1-8}
    embeddings & fully faithful
        & \checkmark & \checkmark 
        & $\crossmark$ & ?
        & $\crossmark$ & \checkmark 
    \end{tabular}}

\end{cor}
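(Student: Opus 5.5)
For the row on embeddings and fully faithful vu-functors, I would treat each entry of the three columns in turn. The main work is the entry $f\Leftarrow F$ in the last column.

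\emph{The two counterexamples and the first column.} The implication $f\Rightarrow\pt(f)$ is exactly \Cref{cor:emb-ff}. Both $\crossmark$ entries come from \Cref{counterex}, as in \Cref{table:etal}. There $\X$ is bounded, and its unit $F:\X\to[1]$ is not faithful, since it identifies all the parallel vu-arrows $a\vuto{\sigma}b$ indexed by the points of $K$. Yet $\Sh(F)$, which is also the adjunct $f:\Sh(\X)\to\Set^{[1]}$, is an equivalence and in particular an embedding. For the converse $f\Leftarrow\pt(f)$, I deduce it from the last column. Assume $\E,\F$ have enough points and $\pt(f)$ is fully faithful. Then $\X=\pt(\E)$ is bounded, and by \Cref{thm:reconstruction} the adjunct of $\pt(f)$ is $f$ precomposed with the counit equivalence $\Sh(\pt(\E))\simeq\E$. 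So $f\Leftarrow F$ applies.

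\emph{The entry $f\Leftarrow F$: reduction to a surjection.} Let $F:\X\to\pt(\E)$ be fully faithful, with $\X$ bounded and $\E$ having enough points, and let $f:\Sh(\X)\to\E$ be its adjunct. Factor $f$ as a surjection $s:\Sh(\X)\to\E'$ followed by an embedding $i:\E'\to\E$.
\begin{enumerate}
  \item $\E'$ has enough points. Indeed $\Sh(\X)$ has enough points, since evaluation at the points of $\X$ is jointly conservative. Because $s^*$ is conservative, the points $s\circ\eta_\X(x)$ for $x\in\X$ are jointly conservative on $\E'$.
  \item $F$ factors as $\X\to\pt(\E')\to\pt(\E)$. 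The second map is $\pt(i)$, which is fully faithful by \Cref{cor:emb-ff}, so the first map is fully faithful as well.
\end{enumerate}
Hence it suffices to show: if $f$ is a surjection and $F$ is fully faithful, then $f$ is an equivalence. Then $f=i\circ s$ is an embedding.

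\emph{Surjective case and main obstacle.} I would show directly that the counit $f^*f_*A\to A$ is an isomorphism for every vu-sheaf $A$ on $\X$; this would also give the general case without the reduction. Here $f^*$ is restriction along $F$ under $\E\simeq\Sh(\pt(\E))$, and $f_*A$ should be a right Kan extension of $A$ along $F$ in the vu-categorical sense. I would compute it on total spaces as a vu-categorical analogue of the classical end formula,
\[
(f_*A)_p\cong\bigl\{\text{families }(\xi_g)\ \text{compatible along vu-arrows } g:p\vuto{\sigma}F(x_s)\bigr\}.
\]
Classically the counit at $F(x)$ is an isomorphism because $\id_{F(x)}$ lies in the image of $F$. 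Full faithfulness of $F$ then makes the comma data over $F(x)$ equal to the data over $x$ in $\X$, so the evaluation at $\id_x$ is a bijection.

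The obstacle is to make this right Kan extension rigorous in $\vuCAT$. One must check that the family description really yields a vu-sheaf, including functoriality under vu-composites and thickenings, and that it is right adjoint to restriction. I would first reduce to étale vu-functors: a vu-sheaf on $\X$ is an étale $\A\to\X$, and pulling back along $F$ should interact with full faithfulness via uniqueness of lifts. If this turns out to be too technical, the fallback is the surjective case. There full faithfulness of $F$ gives fullness of $f^*$ on morphisms, since a vu-natural transformation between vu-sheaves restricted to $\X$ is determined on germs at points of $F(\X)$. Essential surjectivity of $f^*$ would then remain as the delicate step.
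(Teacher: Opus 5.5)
Your handling of the routine entries agrees with the paper. Its proof cites \Cref{thm:reconstruction} and \Cref{cor:emb-ff} for the checkmarks, and the unit $\X\to[1]$ of \Cref{counterex} for both crosses. Deducing the first-column $\Leftarrow$ from the third-column one via the counit equivalence is legitimate. Your reduction of the third-column $\Leftarrow$ to the case where the adjunct $s$ is a surjection onto $\E'$ with fully faithful adjunct $\X\to\pt(\E')$ is also sound. But that surjective case is the whole content of both $\Leftarrow$ checkmarks, and the proposal does not prove it.

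\emph{The Kan extension step.} The pointwise right Kan extension is not a formality. To make your compatible families into a vu-sheaf, you need, for each $h : q\vuto{\kappa}q'_k$, a map from families at $q$ into $\int_{\kappa}$ of families at the $q'_k$. Composing with $h$ only gives, for each $\kappa$-family of arrows $g_k : q'_k\vuto{\sigma_k}F(x_{k,s})$, an element of $\int_{\kappa}\int_{\sigma_k}A_{x_{k,s}}$. That is an element of a product of ultraproducts. You need an element of an ultraproduct of infinite (a priori class-indexed) limits, and ultraproducts do not commute with infinite limits.

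Your sketch also never uses that the codomain is $\pt(\E)$. If it worked, it would show that $\Sh(F)$ is an embedding for every fully faithful $F$ between bounded vu-categories. That is exactly the entry the paper leaves as ``?''.

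\emph{The fallback.} It contains a separate error. That a vu-natural transformation is determined by its germs on $F(\X)$ gives \emph{faithfulness} of $s^*$, which is automatic anyway since $s$ is surjective. It does not give fullness, which would require \emph{extending} a transformation to points outside the image.

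\emph{What remains.} What is actually available is this. A fully faithful $F$ is vu-full, so by \Cref{thm:vufull}(i) and \Cref{thm:reconstruction} the map $s$ is subhyperconnected. Being surjective, it is then hyperconnected, so $s^*$ is fully faithful and the counit $s^*s_*A\to A$ is monic. What remains is precisely that this hyperconnected $s$ is localic. Concretely: for each $A\in\Sh(\X)$, $x\in\X$ and $a\in A_x$, there must exist $U\in\E'$ and $\alpha : s^*U\to A$ with $a$ in the image of $\alpha_x$. That is the missing idea.

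You cannot import it from \Cref{thm-loc-bis} or \Cref{table:loc}. Part (i) of \Cref{thm-loc-bis} concludes by appealing to the first-column $\Leftarrow$ of this very corollary, so that route is circular.
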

\begin{proof}
    Both \checkmark can be deduced from the reconstruction theorem (\Cref{thm:reconstruction}) and \Cref{cor:emb-ff}. 
    For both counterexamples, take $\X$ from \Cref{counterex}, then the unit $F : \X\to\ [1]$ is not fully faithful, although $f : \Sh(\X)\to\Set^{[1]}$ is an equivalence of toposes and so in particular is an embedding.
\end{proof}

        %
        %

    %

\subsection{Localic geometric morphisms}

Our next objective is to show that localic geometric morphisms induce faithful vu-functors on their points. We recall that a geometric morphism $f : \E \to \F$ is \emph{localic} if every object of $\E$ is a subquotient of an object of the form $f^*(X)$ with $X \in F$.

\begin{defn}
    In a category, an object $A$ is a \emph{subquotient} of an object $B$ if it is a quotient of a subobject of $B$, \ie if there exists a span $B \hookleftarrow C \epi A$ whose left leg is a monomorphism and whose right leg is an epimorphism.
\end{defn}

\begin{rmk}\label{rem_subquotient-order}
    In a category where monomorphisms and epimorphisms are stable under pullbacks and pushouts, such as a topos, the order does not matter: every quotient of a subobject of $B$ is a subobject of a quotient of $B$ and vice versa.
\end{rmk}

Here is the categorical result needed.
\begin{lem}\label{lem_localic}
    Suppose that $\C$ is a category and that $\X$ is a collection of objects of $\C$ such that any object of $\C$ is a subquotient of an element of $\X$. Let $F,G : \C \to \D$ be two functors such that $F$ preserves epimorphisms and $G$ preserves monomorphisms and take $\alpha, \beta : F \Rightarrow G$ two natural transformations between them.
    If $\alpha_X = \beta_X$ for every $X \in \X$, then $\alpha = \beta$.
\end{lem}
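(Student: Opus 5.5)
Fix an arbitrary object $A$ of $\C$ and show $\alpha_A = \beta_A$. By hypothesis there is some $X \in \X$ together with a span $X \hookleftarrow C \epi A$, with a monomorphism $m : C \hookrightarrow X$ and an epimorphism $e : C \epi A$. The argument passes through $C$ in two steps: first from $X$ to $C$ using that $G$ preserves monomorphisms, then from $C$ to $A$ using that $F$ preserves epimorphisms.

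\emph{Step 1: $\alpha_C = \beta_C$.} Naturality at $m$ gives $G(m)\circ\alpha_C = \alpha_X\circ F(m)$ and $G(m)\circ\beta_C = \beta_X\circ F(m)$. Since $\alpha_X=\beta_X$, the right-hand sides agree, so $G(m)\circ\alpha_C = G(m)\circ\beta_C$. As $G$ preserves monomorphisms, $G(m)$ is mono in $\D$ and can be cancelled on the left, giving $\alpha_C=\beta_C$.

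\emph{Step 2: $\alpha_A=\beta_A$.} Naturality at $e$ gives $\alpha_A\circ F(e) = G(e)\circ\alpha_C = G(e)\circ\beta_C = \beta_A\circ F(e)$. As $F$ preserves epimorphisms, $F(e)$ is epi and can be cancelled on the right, giving $\alpha_A=\beta_A$.

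There is no real obstacle. The only point needing care is the order of the span: we need a quotient of a subobject, so that the mono is cancelled first with $G$ and the epi second with $F$. This matches the definition of subquotient given above, and \Cref{rem_subquotient-order} is not needed here.

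When this lemma is applied later to $F=p^*$ and $G=\int_\sigma q_s^*$, the hypotheses hold because inverse images preserve monos and epis and ultraproducts of sets preserve monos. That verification belongs to the application and not to this lemma.
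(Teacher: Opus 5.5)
Your proof is correct and is essentially the paper's own argument. You use the naturality square at the monomorphism $m$ and cancel $G(m)$ to get agreement on the subobject, then use the naturality square at the epimorphism $e$ and cancel $F(e)$ to get agreement on the quotient.
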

\begin{proof}
    Pick $C \in \C$. By assumption there exist a monomorphism $m : A \mono X$ with $X \in \X$ and an epimorphism $e : A \epi C$. Consider the naturality squares of $\alpha$ and $\beta$ for $m : A \mono X$.
    \[\begin{tikzcd}
    	{F(A)} && {F(X)} \\
    	\\
    	{G(A)} && {G(X)}
    	\arrow["{F(m)}", from=1-1, to=1-3]
    	\arrow["{\alpha_A}"', shift right=2, from=1-1, to=3-1]
    	\arrow["{\beta_A}", shift left=2, from=1-1, to=3-1]
    	\arrow[""{name=0, anchor=center, inner sep=0}, "{\beta_X}", shift left=2, from=1-3, to=3-3]
    	\arrow[""{name=1, anchor=center, inner sep=0}, "{\alpha_X}"', shift right=2, from=1-3, to=3-3]
    	\arrow["{G(m)}", hook, from=3-1, to=3-3]
    	\arrow[between={0.2}{0.8}, equals, from=1, to=0]
    \end{tikzcd}\]
    From $\alpha_X = \beta_X$ it follows that $G(m) \circ \alpha_A = G(m) \circ \beta_A$, but $G(m)$ is a monomorphism by assumption, so $\alpha_A = \beta_A$.
    By the same reasoning on the naturality square of $e : A \epi C$, together with the fact that $F$ preserves epimorphisms, it follows that $\alpha_C = \beta_C$, so that $\alpha = \beta$.
\end{proof}

\begin{rmk}
    In a setting where \Cref{rem_subquotient-order} applies, the proposition remains true if the preservation properties are swapped.
\end{rmk}

\begin{Thm}\label{thm-loc}
    Let $f : \E \to \F$ be a localic geometric morphism, then $\pt(f) : \pt(\E) \to \pt(\F)$ is \emph{faithful}.
\end{Thm}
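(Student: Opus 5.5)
The plan is to apply \Cref{lem_localic} in the same way that \Cref{cor:emb-ff} applied \Cref{prop:fully faithful general}. Fix points $p$ and $(q_s)_{s:\sigma}$ of $\E$. Two parallel vu-arrows $\alpha,\beta : p\vuto{\sigma}q_s$ in $\pt(\E)$ are natural transformations $\alpha,\beta : p^*\Rightarrow\int_{s:\sigma}{q_s}^*$ of functors $\E\to\Set$. Their images under $\pt(f)$ are the whiskerings $\alpha f^*,\beta f^* : p^*f^*\Rightarrow\int_{s:\sigma}{q_s}^*f^*$. So we assume $\alpha_{f^*X}=\beta_{f^*X}$ for every $X\in\F$, and we must deduce $\alpha=\beta$.

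We apply \Cref{lem_localic} with the following data:
\begin{itemize}
\item $\C=\E$;
\item the collection of objects is $\{f^*X : X\in\F\}$;
\item the first functor is $F=p^*$;
\item the second functor is $G=\int_{s:\sigma}{q_s}^*$.
\end{itemize}
The hypothesis that every object of $\E$ is a subquotient of some $f^*X$ is exactly the definition of $f$ being localic. By \Cref{rem_subquotient-order}, the subobject/quotient order in that definition does not matter, so it matches the form used in the lemma, namely a span $f^*X\hookleftarrow A\epi C$.

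It remains to check the two preservation hypotheses. First, $p^*$ preserves epimorphisms: it is an inverse image functor, hence a left adjoint (and it is left exact and preserves colimits). Second, $\int_{s:\sigma}{q_s}^*$ preserves monomorphisms. Each ${q_s}^*$ is left exact, so it sends monos to injections. An ultraproduct of injections $\int_\sigma A_s\to\int_\sigma B_s$ is again injective: if two families agree after applying the maps for $\sigma$-all $s$, then by injectivity they agree for $\sigma$-all $s$. In fact the ultraproduct functor is left exact, as a filtered colimit of products.

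With both hypotheses verified, \Cref{lem_localic} gives $\alpha=\beta$, which is the required injectivity on homsets. The only point requiring any care is this monomorphism preservation by the ultraproduct, and it is routine. So I expect no real obstacle: the proof amounts to a direct instantiation of \Cref{lem_localic}.
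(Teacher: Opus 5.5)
Your proposal is correct and follows the paper's proof exactly: both instantiate \Cref{lem_localic} with $F=p^*$, which preserves epimorphisms as a left adjoint, and $G=\int_{s:\sigma}{q_s}^*$, which preserves monomorphisms as a left exact functor, using that localicness makes every object of $\E$ a subquotient of some $f^*X$. Your appeal to \Cref{rem_subquotient-order} is harmless but not needed, since the paper's definition of subquotient (a quotient of a subobject) already has the form the lemma uses.
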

\begin{proof}
    Suppose that $\alpha, \beta$ are two $\sigma$-arrows $p \vuto{\sigma}q_s$ in $\pt(\E)$ which have the same image under $\pt(f)$.
    This means that $\alpha$ and $\beta$ are two natural transformations $p^*\Rightarrow \int_{s:\sigma}{q_s}^*$ that agree on objects of the form $f^*(X)$ with $X \in \F$.
    The functor $p^*$ is a left adjoint, so it preserves epimorphisms. Both $\int_{s:\sigma}$ and $({q_s}^*)$ preserve finite limits, so they preserve monomorphisms as well. \Cref{lem_localic} therefore applies and yields $\alpha = \beta$.
\end{proof}

The full picture relating faithful vu-functors and localic geometric morphisms is postponed to \Cref{table:loc}.

\section{Interlude: the merging strategy}\label{sec:merging}

Understanding surjections and hyperconnected maps from their vu-categories of points will be harder than for embeddings and localic morphisms. 
To deduce their vu-characterisations, we will reduce to the 0-dimensional case using \emph{localic slices} (see \Cref{topoetal}) and we will deduce the full 1-dimensional result by merging the 0-dimensional results along a \emph{semi-flat diagram of vu-sheaves} (see \Cref{semi-flat}).
In this section we settle the terminology and tools needed for this merging strategy.

\subsection{Reducing to the 0-dimension}\label{topoetal}

\begin{defn}\label{def:loc-slice}
    For a geometric morphism $f : \F\to\E$ and $E\in\E$, \emph{the localic slice} of $f$ above $E$ is the morphism of locales given by $\Lref{f/E} : \Lref{\F/f^*E} \to\Lref{\E/E}$.
    \[\begin{tikzcd}
    	{\F/f^*E} & {\E/E} \\
    	\F & \E
    	\arrow["{f/E}", from=1-1, to=1-2]
    	\arrow["{\text{ét}}"', from=1-1, to=2-1]
    	\arrow["\lrcorner"{anchor=center, pos=0.125}, draw=none, from=1-1, to=2-2]
    	\arrow["{\text{ét}}", from=1-2, to=2-2]
    	\arrow["f", from=2-1, to=2-2]
    \end{tikzcd}\] 
\end{defn}

\begin{rmk}\label{rmk:propclass}
    We propose to say that a class of geometric morphisms is \emph{propositional} if a geometric morphism belongs to the class if and only if all its localic slices belong to the class.
    As we will see later (\Cref{prop:surj-prop} and \Cref{shc-localic}), (sub)hyperconnected and surjective geometric morphisms form propositional classes of geometric morphisms. This is the crucial property that we will use to establish their vu-categorical characterisations, and we expect that the techniques presented in this paper can be extended to other propositional classes of geometric morphisms.
\end{rmk}

\begin{prop}\label{prop:loc-slice}
    For $F:\X\to\Y$ between bounded vu-categories and $B\in\Sh(\Y)$, the localic slice of $\Sh(F)$ above $B$ is given by 
    \[\shloc(\Tref{F^*\B}) \to \shloc(\Tref{\B}).\]
\end{prop}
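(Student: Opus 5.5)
Write $f \coloneqq \Sh(F) : \Sh(\X)\to\Sh(\Y)$. By \Cref{def:loc-slice}, the localic slice of $f$ above $B$ is $\Lref{f/B} : \Lref{\Sh(\X)/f^*B}\to\Lref{\Sh(\Y)/B}$. I identify the domain and codomain of $f/B$ with toposes of vu-sheaves on total spaces, and then use \Cref{circular-square} to pass to topological reflections.

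First I check that the inverse image $f^* = \Sh(F)^*$ sends a vu-sheaf $B:\Y\to\Set$ to the precomposite $B\circ F$. This is how $\Sh$ acts on vu-functors, since $\Sh(\X)=\vuCAT(\X,\Set)$. Write $F^*\B$ for the total space of $B\circ F$. Concretely, its objects are pairs $(x,\xi)$ with $\xi\in B(F x)$, and it is the pullback $\X\times_\Y\B$ of the étale map $\B\to\Y$ along $F$. The induced vu-functor $F^*\B\to\B$, given by $(x,\xi)\mapsto(Fx,\xi)$, lies over $F$.

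By \Cref{thm:etale}\ref{prop:slice-etale}, there are equivalences $\Sh(\B)\simeq\Sh(\Y)/B$ and $\Sh(F^*\B)\simeq\Sh(\X)/f^*B$, compatible with the étale projections. Both total spaces are bounded, because the equivalent slice toposes are Grothendieck toposes. The main step is to show that, under these identifications, $\Sh(F^*\B\to\B)$ corresponds to $f/B$. The morphism $f/B$ is characterised as the pullback-induced map: its inverse image sends $(C\to B)$ to $(f^*C\to f^*B)$. Under the total space correspondence, a vu-sheaf over $\B$ is an étale $\C\to\B$, and precomposition along $F^*\B\to\B$ sends it to the pullback $\C\times_\B F^*\B$. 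That pullback is the total space of $f^*$ of the composite $\C\to\B\to\Y$, viewed over $f^*B$. The verification amounts to checking that pullback of étale vu-functors corresponds to precomposition of vu-sheaves, which is routine by the unique-lifting description. I expect this bookkeeping of the two equivalences, up to natural isomorphism, to be the main (mild) obstacle.

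Finally, I apply $\Lref{-}$ and use the naturality of $\Lref{\Sh(\Z)}\cong\shloc(\Tref{\Z})$ from \Cref{circular-square}, which is valid for bounded $\Z$. This gives the square identifying $\Lref{f/B}$ with $\shloc(\Tref{F^*\B})\to\shloc(\Tref{\B})$. Naturality holds because the isomorphism is built from $\vuCat(\Z,[1])$, and that construction is functorial in $\Z$ by precomposition.
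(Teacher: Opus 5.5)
Your proposal is correct and follows the paper's proof. The paper also identifies the localic slice with $\Lref{\Sh(F^*\B)}\to\Lref{\Sh(\B)}$ via \Cref{thm:etale}\ref{prop:slice-etale} and then rewrites it using \Cref{circular-square}. You additionally spell out three checks the paper leaves implicit: that $\Sh(F/B)$ matches $\Sh(F)/B$ under the slice equivalences (via pullback of étale vu-functors), that the total spaces are bounded, and that the isomorphism of \Cref{circular-square} is natural.
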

\begin{proof}
    By \Cref{thm:etale}\ref{prop:slice-etale} the localic slice is given by $\Lref{\Sh(F^*\B)} \to \Lref{\Sh(\B)}$ and by \Cref{circular-square} this map can be rewritten as $\shloc(\Tref{F^*\B}) \to \shloc(\Tref{\B})$.
\end{proof}

We now give the vu-categorical counterpart of the localic slice.
\begin{defn}\label{def:etalements}
    For a vu-functor $F : \X\to\Y$ and $A\in\Sh(\Y)$, \emph{the topological étalement} of $F$ above $A$ is the vu-functor of vu-posets given by $\Tref{F/A} : \Tref{F^*\A} \to \Tref{\A}$. 
    \[\begin{tikzcd}
    	{F^*\A} & {\A} \\
    	\X & \Y
    	\arrow["{F/A}", from=1-1, to=1-2]
    	\arrow["{\text{ét}}"', from=1-1, to=2-1]
    	\arrow["\lrcorner"{anchor=center, pos=0.125}, draw=none, from=1-1, to=2-2]
    	\arrow["{\text{ét}}", from=1-2, to=2-2]
    	\arrow["F", from=2-1, to=2-2]
    \end{tikzcd}\]
\end{defn}

\begin{rmk}
    Note that if $\Y$ does not have enough vu-sheaves, the topological étalements of $F$ are not enough to recover interesting information on $F$.
\end{rmk}

\begin{prop}\label{prop:etalements}\leavevmode
    \begin{enumerate} 
        \item\label{prop:etalements:i}
        For $\X$ a bounded vu-category and $A\in\Sh(\X)$, the topological étalement of $\eta_\X:\X\to\pt(\Sh(\X))$ above $A$ is given by 
        \[\A\to\pt(\Sh(\A)).\]
        \item\label{prop:etalements:ii} For $f : \F\to\E$ a geometric morphism and $A\in\E$, the topological étalement of $\pt(f)$ above $\ev(A)$ is given by 
        \[\Tref{\pt(\F/f^*A)} \to \Tref{\pt(\E/A)}.\]
    \end{enumerate}
\end{prop}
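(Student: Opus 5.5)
Both parts follow the same pattern. Pulling back a total space along a vu-functor is again a total space, namely that of the precomposed vu-sheaf. I then identify the two ends of the étalement using \Cref{thm:etale}, and finally apply $\Tref{-}$.

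\textbf{Preliminary: pullback of a total space.} For $F:\X\to\Y$ and $B\in\Sh(\Y)$, the pullback $F^*\B$ in \Cref{def:etalements} is the total space of $B\circ F\in\Sh(\X)$. The check is directly on the description of total spaces recalled before \Cref{lem:slice-etale}:
\begin{itemize}
\item objects of $F^*\B$ are pairs $(x,\xi)$ with $\xi\in B(F x)$;
\item $\sigma$-arrows $(x,\xi)\vuto{\sigma}(y_s,\zeta_s)$ are arrows $f$ of $\X$ with $F(f)_*\xi=(\zeta_s)_{s:\sigma}$.
\end{itemize}
This is exactly $\et{B\circ F}$. It also verifies the strict pullback property in $\vuCAT$: a cone consists of an arrow in $\X$ together with a lift of its image through the étale map $\B\to\Y$, and that lift is unique.

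\textbf{Part (ii).} Take $B=\ev(A)$ and $F=\pt(f)$. By \Cref{thm:etale}\ref{prop:point-etale}, the total space of $\ev(A)$ over $\pt(\E)$ is $\pt(\E/A)\to\pt(\E)$. By the preliminary step, its pullback along $\pt(f)$ is the total space of $\ev(A)\circ\pt(f)$. On a point $q$ of $\F$ this vu-sheaf is $(fq)^*A=q^*f^*A$, with the evident action on vu-arrows, so it is $\ev(f^*A)\in\Sh(\pt(\F))$.

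Applying \Cref{thm:etale}\ref{prop:point-etale} again, this total space is $\pt(\F/f^*A)$. Under these identifications, the map $F/A$ is $\pt(f/A)$: on objects it sends $(q,\zeta\in q^*f^*A)$ to $(fq,\zeta)$, and on arrows it is the identity on the underlying natural transformation. Applying $\Tref{-}$ gives $\Tref{\pt(\F/f^*A)}\to\Tref{\pt(\E/A)}$, as claimed.

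\textbf{Part (i).} Here $\Sh(\X)\simeq\Sh(\pt(\Sh(\X)))$ through $\ev$, so a vu-sheaf over $\pt(\Sh(\X))$ is of the form $\ev(A)$ for some $A\in\Sh(\X)$.
\begin{enumerate}
\item By \Cref{thm:etale}\ref{prop:point-etale}, the total space of $\ev(A)$ is $\pt(\Sh(\X)/A)$.
\item By \Cref{thm:etale}\ref{prop:slice-etale}, $\Sh(\X)/A\simeq\Sh(\A)$, so this total space is $\pt(\Sh(\A))$.
\item The pullback along $\eta_\X$ is the total space of $\ev(A)\circ\eta_\X$. By the triangle identity of the adjunction in \Cref{thm:reconstruction}, this composite is $A$, since evaluating the sheaf $A$ at the point $\eta_\X(x)$ returns $A(x)$. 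Hence the pullback is $\A$.
\item The induced map $\A\to\pt(\Sh(\A))$ sends $(x,\xi)$ to the point $\eta_\X(x)$ together with the germ $\xi$. Under $\Sh(\X)/A\simeq\Sh(\A)$, this point corresponds to evaluation at $(x,\xi)$, so the map is $\eta_\A$. This is the naturality of the unit with respect to the étale map $\A\to\X$.
\end{enumerate}
Taking topological reflections gives the statement; I read the stated map $\A\to\pt(\Sh(\A))$ as this map up to $\Tref{-}$.

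\textbf{Main obstacle.} The delicate point is the coherence in step 4: matching, up to equivalence, the point $(\eta_\X(x),\xi)$ of $\Sh(\X)/A$ with the point $\eta_\A(x,\xi)$ of $\Sh(\A)$ through the equivalence of \Cref{thm:etale}\ref{prop:slice-etale}, and doing the same on vu-arrows. I would handle it by computing inverse images explicitly. A vu-sheaf $C$ on $\A$ corresponds to the object $\et{C}\to\A\to\X$ of $\Sh(\X)/A$. Both points then send $C$ to $C(x,\xi)$: the fibre over $\xi$ of the stalk at $x$. Arrows are checked in the same way.
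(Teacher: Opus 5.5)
Your proposal is correct and follows the same route as the paper: identify the total spaces of $\ev(A)$ and $\ev(f^*A)$ with $\pt(\Sh(\X)/A)\simeq\pt(\Sh(\A))$, $\pt(\E/A)$ and $\pt(\F/f^*A)$ via \Cref{thm:etale}, then read off the pullback squares. The paper leaves several steps implicit that you make explicit: that pulling back a total space along $F$ gives the total space of $B\circ F$, that $\ev(A)\circ\eta_\X\cong A$ and $\ev(A)\circ\pt(f)\cong\ev(f^*A)$, and that the top arrow is $\eta_\A$. You are also right to read part (i) as holding after applying $\Tref{-}$.
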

\begin{proof}\leavevmode
    \begin{enumerate}
        \item By \Cref{thm:etale}\ref{prop:slice-etale}\ref{prop:point-etale}, we have 
        $\et{\ev(A)}\cong\pt(\Sh(\X)/A)\cong\pt(\Sh(\A))$.
        \[\begin{tikzcd}
            \A & {\pt(\Sh(\A))}\\
            \X & {\pt(\Sh(\X))}
            \arrow[from=1-1, to=1-2]
            \arrow["{\text{ét}}"', from=1-1, to=2-1]
            \arrow["\lrcorner"{anchor=center, pos=0.125}, draw=none, from=1-1, to=2-2]
            \arrow["{\text{ét}}", from=1-2, to=2-2]
            \arrow["{\eta_\X}", from=2-1, to=2-2]
        \end{tikzcd}\]
        
        \item By \Cref{thm:etale}\ref{prop:point-etale}, we have 
        $\et{\ev(A)} \cong \pt(\E/A)$ and $\et{\ev(f^*A)} \cong \pt(\F/f^*A)$.
        \[\begin{tikzcd}
        	\pt(\F/f^*A) & {\pt(\E/A)} \\
        	\pt(\E) & {\pt(\F)}
        	\arrow[from=1-1, to=1-2]
        	\arrow["{\text{ét}}"', from=1-1, to=2-1]
        	\arrow["\lrcorner"{anchor=center, pos=0.125}, draw=none, from=1-1, to=2-2]
        	\arrow["{\text{ét}}", from=1-2, to=2-2]
        	\arrow["{\pt(f)}", from=2-1, to=2-2]
        \end{tikzcd}\qedhere\]
    \end{enumerate}
\end{proof}

\subsection{Merging along a semi-flat diagram}\label{semi-flat}

We now introduce the notion of \emph{semi-flat diagram of vu-sheaves}. This notion captures exactly the condition needed on a diagram $[-] : \dsD\to\Sh(\X)$ so we can use it to merge together results shown on topological étalements above the $([D])_{D:\dsD}$. It will be a crucial tool for the proofs of \Cref{prop:Barr-categorified} and \Cref{prop:ultraretract-crux}.

\begin{nota}\label{nota:em}
    We fix $[-] : \dsD\to\Sh(\X)$ a small diagram of vu-sheaves over $\X$. For $p\in\X$, we denote by $\eM_{\dsD}(p)$ the category of elements of the functor $[-]_p : \dsD\to\Set$, and an element $\ell\in\eM_{\dsD}(p)$ will be denoted as a pair $\ell = (D_{\ell},\xi_{\ell})$ with $D_\ell\in\dsD$ and $\xi_\ell\in[D_\ell]_p$.
\end{nota}

\begin{defn}
    A diagram $[-] : \dsD\to\Sh(\X)$ is said \emph{semi-flat} if for any $p$ in $\X$ the posetal reflection of the category $\eM_{\dsD}(p)$ is codirected, \ie for any $\xi_0\in [D_0]_p$ and $\xi_1\in [D_1]_p$ there exists $\xi\in [D]$ with maps $D\rightrightarrows D_i$ mapping $\xi$ to $\xi_i$.
\end{defn}

The main motivation behind introducing the notion of semi-flatness is the following consequence of \Cref{lem:ext-final-uf}.

\begin{cor}\label{def:lambda}
    For $[-] : \dsD\to\Sh(\X)$ a semi-flat diagram, we can consider an \emph{initial ultrafilter} on the objects of $\eM_{\dsD}(p)$, that is, an ultrafilter $\lambda$ on pairs $(D,\xi)$, with $D\in\dsD$ and $\xi\in [D]_p$, such that for each $\xi_0\in [D_0]_p$, for $\lambda$-all $(D,\xi)$ there exists a $f : D\to D_0$ such that $[f]_p(\xi) = \xi_0$.
\end{cor}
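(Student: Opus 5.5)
This follows from \Cref{lem:ext-final-uf} applied to a suitable preorder on the objects of $\eM_{\dsD}(p)$. The plan is to define on the set $L$ of pairs $(D,\xi)$ with $D\in\dsD$ and $\xi\in[D]_p$ the relation $(D,\xi)\leq(D_0,\xi_0)$ iff there is some $f : D\to D_0$ in $\dsD$ with $[f]_p(\xi)=\xi_0$. This is exactly the preorder underlying the posetal reflection of $\eM_{\dsD}(p)$. The set $L$ is small because $\dsD$ is small and each $[D]_p$ is a set.

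First I check that $\leq$ is a preorder. Reflexivity uses identities of $\dsD$. Transitivity uses composition together with the functoriality $[g\circ f]_p=[g]_p\circ[f]_p$. Next, semi-flatness says precisely that this preorder is codirected: for any two elements $(D_0,\xi_0)$ and $(D_1,\xi_1)$ there is a $(D,\xi)$ below both. One also needs $L$ to be non-empty. If $\eM_{\dsD}(p)$ is empty, the condition on $\lambda$ is vacuous but no ultrafilter exists on the empty set. So I would read semi-flatness (codirectedness) as including inhabitedness, as is standard for cofiltered conditions, or simply restrict the statement to those $p$ with $\eM_{\dsD}(p)$ inhabited.

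\Cref{lem:ext-final-uf} is stated for posets, but its proof goes through \Cref{lem:extend-uf} and works verbatim for preorders. Concretely, take $\alpha$ to be the collection of down-sets $\downarrow(D_0,\xi_0)=\{\ell\in L : \ell\leq(D_0,\xi_0)\}$. Any finite intersection of these is non-empty: for the empty intersection this is inhabitedness of $L$, and otherwise it follows from codirectedness by induction. \Cref{lem:extend-uf} then yields an ultrafilter $\lambda$ on $L$ such that, for each $(D_0,\xi_0)$, $\lambda$-all $(D,\xi)$ lie in $\downarrow(D_0,\xi_0)$. Unfolding the definition of $\leq$, this says that for $\lambda$-all $(D,\xi)$ there exists $f : D\to D_0$ with $[f]_p(\xi)=\xi_0$, which is the required initiality.

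The only subtle point is this handling of the empty case and the passage from a poset to a preorder. Otherwise the argument is a direct unfolding of the definitions.
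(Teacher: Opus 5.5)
Your proposal is correct and follows the paper's approach: the paper just invokes \Cref{lem:ext-final-uf} on the codirected posetal reflection of $\eM_{\dsD}(p)$, and you do the same via \Cref{lem:extend-uf}, working directly on the preorder of pairs. Working with the preorder avoids lifting an ultrafilter from the quotient poset back to objects, and you are right that inhabitedness of $\eM_{\dsD}(p)$ is needed; the paper leaves this implicit, but it holds automatically when $[-]$ preserves the empty product, i.e.\ the terminal object, as in the paper's applications.
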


We now give an easy way to produce semi-flat diagrams.

\begin{lem}
    If $\dsD$ has finite products and $[-] : \dsD\to\Sh(\X)$ preserves them, then $[-] : \dsD\to\Sh(\X)$ is semi-flat.
\end{lem}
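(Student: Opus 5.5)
The argument is a direct unwinding of the definition of semi-flatness; the only inputs are the product structure in $\dsD$ and the fact that evaluation at a point is compatible with finite products in $\Sh(\X)$. Fix a point $p$ of $\X$, and take $\xi_0\in[D_0]_p$ and $\xi_1\in[D_1]_p$. We must produce an object $D$ of $\dsD$, an element $\xi\in[D]_p$, and maps $D\to D_0$ and $D\to D_1$ in $\dsD$ sending $\xi$ to $\xi_0$ and $\xi_1$ respectively.

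The natural candidate is $D\coloneqq D_0\times D_1$ in $\dsD$, together with its two projections $\pi_i : D\to D_i$. Since $[-]$ preserves finite products, the induced map $[D_0\times D_1]\to[D_0]\times[D_1]$ is an isomorphism in $\Sh(\X)$. The task is therefore to show that products in $\Sh(\X)$ are computed pointwise. Granting this, evaluation at $p$ yields a bijection
\[[D_0\times D_1]_p\cong[D_0]_p\times[D_1]_p,\]
and this bijection is compatible with the projections $[\pi_i]_p$. Taking $\xi$ to be the preimage of the pair $(\xi_0,\xi_1)$ then gives $[\pi_i]_p(\xi)=\xi_i$ for $i=0,1$, which is exactly the required codirectedness condition on $\eM_{\dsD}(p)$.

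The only step needing justification is that finite products in $\Sh(\X)=\vuCAT(\X,\Set)$ are pointwise. I expect this to be the sole point of care. The argument is as follows.
\begin{itemize}
  \item \emph{Binary products.} For vu-sheaves $A$ and $B$, the vu-functor $p\mapsto A_p\times B_p$ is well defined. On a $\sigma$-arrow $f$ it acts by $f_*$ on each component, using the canonical identification
  \[\textstyle\int_\sigma(A_{q_s}\times B_{q_s})\cong\int_\sigma A_{q_s}\times\int_\sigma B_{q_s}.\]
  Compatibility with composition and thickening is then checked componentwise. The universal property also holds componentwise, since a vu-natural transformation is a family of maps natural with respect to the $f_*$.
  \item \emph{Terminal object.} The constant vu-sheaf $1$ is terminal, because ultraproducts of singletons are singletons.
\end{itemize}
This is the same computation underlying the statement that $\Sh(\X)$ is an infinity-pretopos with pointwise limits, cited earlier in the paper. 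Since product-preservation is only defined up to isomorphism, it suffices to transport $(\xi_0,\xi_1)$ along the comparison bijection.

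No ultrafilter argument is needed here. The empty case (the terminal object) is not required for codirectedness of the posetal reflection as defined in the paper, which only asks for binary bounds. If one also wants $\eM_{\dsD}(p)$ to be inhabited, the terminal object $1\in\dsD$ maps to the singleton at $p$, which supplies an element.
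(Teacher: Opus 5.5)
Your proposal is correct and follows the paper's proof: take $D_0\times D_1$ with its two projections, and take the element corresponding to $(\xi_0,\xi_1)$ under the bijection $[D_0\times D_1]_p\cong[D_0]_p\times[D_1]_p$. Your check that finite products in $\Sh(\X)$ are computed pointwise, because ultraproducts commute with finite products, is a detail the paper leaves implicit; it does not change the argument.
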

\begin{proof}
    For $\xi_0\in[D_0]_p$ and $\xi_1\in[D_1]_p$, we consider $(\xi_0,\xi_1)\in[D_0\times D_1]_p$, and the two projections show that $(D_0\times D_1,(\xi_0,\xi_1))$ is smaller than both $(D_i,\xi_i)$ in the posetal reflection of $\eM_{\dsD}(p)$.
\end{proof}




\begin{cor}\label{prop:semi-flat-exist}
    Let $\dsD\subseteq \E$ be a small dense full sub-category of a topos $\E$ closed under products. Then the functor $ \dsD\to \Sh(\pt(\E))$ sending $D\in\dsD$ to the vu-sheaf $\ev(D)$ is a semi-flat diagram separating vu-arrows. 
    
    In particular, for any full sub-vu-category $\X\subseteq\pt(\E)$, one can find a small semi-flat diagram $[-] : \dsD\to\Sh(\X)$ of vu-sheaves of the form $\ev(E)$, that moreover separates vu-arrows.

\end{cor}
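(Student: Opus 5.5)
The corollary follows from the preceding lemma once three things are checked: $\ev$ restricted to $\dsD$ preserves finite products, the resulting diagram separates vu-arrows, and the statement transfers to a full sub-vu-category.

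\emph{Semi-flatness.} We need that $\ev:\E\to\Sh(\pt(\E))$ preserves the products of $\dsD$. Products in $\Sh(\pt(\E))$ are computed pointwise, since $\Sh(\X)=\vuCAT(\X,\Set)$ and finite products of vu-functors into $\Set$ are pointwise. Each inverse image $p^*$ preserves finite limits, so $\ev(D_0\times D_1)_p = p^*(D_0\times D_1)\cong p^*D_0\times p^*D_1$. Hence $\ev|_{\dsD}$ preserves finite products. If ``closed under products'' is meant to include the empty product, the terminal object is handled the same way. The preceding lemma then gives semi-flatness.

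\emph{Separation.} Let $\alpha,\beta : p\vuto{\sigma}q_s$ be parallel vu-arrows in $\pt(\E)$, i.e.\ natural transformations $p^*\Rightarrow\int_\sigma q_s^*$, and suppose $\alpha_D=\beta_D$ for all $D\in\dsD$. Since $\dsD$ is dense (a small generating family suffices here), every $E\in\E$ admits an epimorphism $\coprod_i D_i\epi E$ with $D_i\in\dsD$. Because $p^*$ preserves colimits, $p^*(\coprod_i D_i)\cong\coprod_i p^*D_i$, and $p^*$ sends this epi to an epi. By naturality on the coprojections, $\alpha$ and $\beta$ agree on $\coprod_i D_i$. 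Naturality along the epimorphism, which $p^*$ preserves, then gives $\alpha_E=\beta_E$, exactly as in the proof of \Cref{lem_localic}.

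\emph{Restriction to a full sub-vu-category.} Given a full $\X\subseteq\pt(\E)$, compose the diagram with restriction $\Sh(\pt(\E))\to\Sh(\X)$. This restriction is computed pointwise, so it preserves products and codirectedness of $\eM_\dsD(p)$ for $p\in\X$; the categories of elements at points of $\X$ are literally unchanged. Separation is inherited because the vu-arrows of $\X$ are vu-arrows of $\pt(\E)$. Such a $\dsD$ exists: take a small generating full subcategory of $\E$ and close it under finite products, which keeps it small.

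The only step needing care is separation, namely reducing from generators to arbitrary objects via coproducts and the epi. It is routine given that $p^*$ is cocontinuous.
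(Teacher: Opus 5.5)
Your proof is correct and follows the argument the paper leaves implicit after the preceding lemma. Semi-flatness comes from each $p^*$ preserving finite products, and separation of vu-arrows comes from cocontinuity of $p^*$, which makes a natural transformation $p^*\Rightarrow\int_\sigma q_s^*$ determined by its components on $\dsD$. Your remark that a generating family already suffices, via coproducts and epimorphisms as in \Cref{lem_localic}, is a harmless sharpening of the density hypothesis.
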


\begin{ex}
    Let $U\in\E$ be a prebound of $\E$ (\ie the corresponding geometric morphism $\E\to\SetO$ is localic). Then the functor $\mathds{N}^{\op} \to \Sh(\pt(\E))$ sending $n$ to the vu-sheaf $\ev(U^n)$, is a semi-flat diagram separating vu-arrows (where $\mathds{N}$ denotes the category of integers seen as finite sets and set-functions between them).
    One can think of $U_p$ as the underlying set of the model $p$. In that case, the posetal reflection of $\eM_{\dsD}(p)$ is given by the poset of finite subsets of $U_p$ (which can be seen as finite sets of $U$-parameters in $p$) ordered by reverse inclusion. Thus, an initial ultrafilter $\lambda$ as in \Cref{def:lambda} can be seen as a ‘finite set of $U$-parameters in $p$ that contains all the elements of $U_p$’.
\end{ex}

\section{Subhyperconnectedness}\label{sec:vu-full}

This section and the next one (\Cref{sec:surjectivity}) are fully independent, even though they follow a similar proof pattern.
 
In this section we focus on the notion of fullness of vu-functors, that happens to be more subtle than the notion of (fully) faithfulness.
A vu-full vu-functor is a vu-functor that is full \emph{up to thickening}. The main technical result is that if the codomain arises from the points of topos, vu-fullness can be checked on the topological étalements (\Cref{shc-vufull-cor}).

We then introduce the notion of subhyperconnected geometric morphism, which is the toposical counterpart of vu-full vu-functors. We first show that these two notions agree at the 0-dimensional level. We then establish the relation between the 1-dimensional objects (\Cref{thm:vufull}), using that these notions can be detected on the localic slices (\Cref{shc-localic}\ref{shc-slices}) and the topological étalements (\Cref{shc-vufull-cor}). 

Finally, we deduce by orthogonality the relation between localic geometric morphisms and faithful vu-functors (\Cref{thm-loc-bis}), and we conclude the section with a characterisation of vu-categories arising from a class of points of a topos as taut bounded vu-categories having enough sheaves (\Cref{locsobercaract}).


\subsection{Vu-fullness}

\begin{defn}
    A vu-functor $F:\X\to\Y$ is \emph{vu-full} if for $p$ and $(q_s)_{s:\sigma}$ in $\X$, any vu-arrow $g : F(p)\vuto{\sigma}{F(q_s)}$ can be thickened to be in the image of $F$, \ie there is $\phi : \tau\to\sigma$ and $f:p\vuto{\tau}{q_{\phi t}}$ such that $F(f) = \phi^*g$.
\end{defn}

In full generality, a vu-full and faithful vu-functor is not necessarily fully faithful; however it is the case if the vu-categories are assumed to be taut.

\begin{prop}\label{taut-ff}
    A vu-functor between taut vu-categories is fully faithful if and only if it is vu-full and faithful.
\end{prop}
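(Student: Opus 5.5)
The forward direction is immediate. A fully faithful vu-functor is faithful, and it is vu-full with the trivial thickening $\phi=\id_\sigma$: any $g : F(p)\vuto{\sigma}F(q_s)$ has a preimage $f$, and ${\id_\sigma}^*g = g = F(f)$. Tautness is not needed here.

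For the converse, let $F:\X\to\Y$ be vu-full and faithful with $\X,\Y$ taut. Faithfulness already gives injectivity, so only surjectivity on each $\Hom_\sigma(p,q_s)$ remains. Fix $g : F(p)\vuto{\sigma}F(q_s)$. Vu-fullness gives $\phi:\tau\to\sigma$ and $f : p\vuto{\tau}q_{\phi t}$ with $F(f)=\phi^*g$. The plan is to show that $f$ is reducible along $\phi$ in $\X$, then use tautness of $\X$ to unthicken it to $f' : p\vuto{\sigma}q_s$ with $f = \phi^*f'$, and finally show $F(f') = g$.

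To check reducibility, take $\psi_1,\psi_2 : \lambda\rightrightarrows\tau$ with $\phi\psi_1=\phi\psi_2$. A vu-functor commutes with thickening, so
\[F(\psi_i^*f) = \psi_i^*F(f) = \psi_i^*\phi^*g = (\phi\psi_i)^*g,\]
and these agree for $i=1,2$. Both $\psi_i^*f$ are parallel vu-arrows $p\vuto{\lambda}q_{\phi\psi_1 \ell}$, since $\phi\psi_1=\phi\psi_2$ means the targets coincide as $\lambda$-families. Faithfulness of $F$ then yields $\psi_1^*f=\psi_2^*f$. Tautness of $\X$ therefore provides $f'$ with $\phi^*f'=f$.

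It remains to show $F(f')=g$. We have $\phi^*F(f') = F(\phi^*f') = F(f) = \phi^*g$. The uniqueness clause of tautness in $\Y$ now applies: $\phi^*g$ is reducible along $\phi$, since any vu-arrow of the form $\phi^*h$ is, because $\psi_i^*\phi^*h=(\phi\psi_i)^*h$. Its unthickening along $\phi$ is therefore unique, and so $F(f')=g$. In other words, $\phi^*$ is injective on hom-sets of a taut vu-category.

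The only delicate point is this last use of tautness of $\Y$, together with checking that $F$ commutes with reindexing, which is part of the definition of a vu-functor. Nothing deeper is needed.
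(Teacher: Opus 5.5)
Your proof is correct and follows the paper's argument exactly: vu-fullness gives a thickened preimage, faithfulness makes it reducible along $\phi$, tautness of $\X$ unthickens it, and uniqueness of unthickening in $\Y$ identifies its image with $g$. Your remark that $\phi^*g$ is automatically reducible along $\phi$, so that the uniqueness clause of tautness applies, spells out a step the paper leaves implicit.
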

\begin{proof}
    It is clear that a fully faithful vu-functor is faithful and vu-full. 
    Conversely, we take $F:\X\to\Y$ vu-full and faithful between taut vu-categories and $g : F(p)\vuto{\sigma}F(q_s)$, and we want to show that $g = F(f)$ for some $f:p\vuto{\sigma}q_s$.
    Vu-fullness gives us $\phi : \tau\to\sigma$ and $h:p\vuto{\tau}q_{\phi t}$ such that $F(h) = \phi^*g$.
    For any $\psi_1,\psi_2 : \lambda\rightrightarrows\tau$ equalising $\phi$, we have 
    \[F(\psi_1^*h) = \psi_1^*F(h) = \psi_1^*\phi^*g = \psi_2^*\phi^*g = \psi_2^*F(h) = F(\psi_2^*h)\]
    and so $\psi_1^*h = \psi_2^*h$ by faithfulness of $F$.
    Hence, the tautness hypothesis on $\X$ gives us some $f:p\vuto{\sigma}q_s$ such that $h = \phi^*f$, and we can then write
    \[\phi^*g = F(h) = F(\phi^*f) = \phi^*F(f),\]
    so that $F(f) = g$ by tautness of $\Y$.
\end{proof}

\begin{cor}\label{prop:vufull-posetal}
    A vu-functor between taut vu-posets is fully faithful if and only if it is vu-full.
\end{cor}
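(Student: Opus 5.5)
This follows directly from \Cref{taut-ff}: a vu-functor $F : X\to Y$ between taut vu-posets is fully faithful if and only if it is vu-full and faithful. So the only thing left to check is that faithfulness holds automatically in the posetal case.

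By definition each homset $\Hom_\sigma(p,q_s)$ of a vu-poset is a subsingleton. Any set map out of a subsingleton is injective, so every vu-functor between vu-posets is faithful. Combining this with \Cref{taut-ff}, the condition ``vu-full and faithful'' reduces to ``vu-full'', which gives the claim.

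As a sanity check, the direct argument is also short. Suppose $F$ is vu-full and $F(p)\vuleq{\sigma}F(q_s)$. Vu-fullness gives some $\phi:\tau\to\sigma$ with $p\vuleq{\tau}q_{\phi t}$. Tautness of $X$, in the form of \Cref{def:taut-poset}, then gives $p\vuleq{\sigma}q_s$. Since homsets are subsingletons, this is exactly fullness. Notably, tautness of $Y$ is not needed for this direction. There is no real obstacle: the only step to check is that faithfulness is automatic for vu-posets.
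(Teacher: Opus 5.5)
Your proof is correct and matches the paper's: it also deduces the corollary from \Cref{taut-ff} by noting that every vu-functor between vu-posets is faithful, since homsets are subsingletons. Your direct check is also sound, and you rightly observe that tautness of the codomain is not needed here, because the target homset is a subsingleton.
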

\begin{proof}
    A vu-functor between vu-posets is always faithful.
\end{proof}

\begin{prop}\label{prop:vufull}\leavevmode
    \begin{enumerate}
        \item\label{prop:vufull-topreflection} Vu-full functors are stable under topological reflections.
        \item\label{prop:vufull-pb} Vu-full functors are stable under pullbacks.
        \item\label{prop:vufull-topetalement} Vu-full functors are stable under topological étalements, \ie if $F : \X \to \Y$ is vu-full, then $\Tref{F/A} : \Tref{F^*\A} \to \Tref{\A}$ is fully faithful for each $A \in \Sh(\Y)$.
    \end{enumerate}
\end{prop}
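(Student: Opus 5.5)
We prove the three items in order; the third follows from the first two.

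For \ref{prop:vufull-topreflection}, let $F:\X\to\Y$ be vu-full. We show that $\Tref{F}:\Tref{\X}\to\Tref{\Y}$ is vu-full; by \Cref{prop:vufull-posetal} this is the same as being fully faithful, since both reflections are taut vu-posets.
\begin{itemize}
\item Suppose $F(p)\vuleq[\Tref{\Y}]{\sigma}F(q_s)$. By \Cref{def:Tref} there are $\phi:\tau\to\sigma$ and a vu-arrow $g:F(p)\vuto{\tau}F(q_{\phi t})$ in $\Y$.
\item Vu-fullness of $F$ applied to $g$ gives $\psi:\kappa\to\tau$ and $f:p\vuto{\kappa}q_{\phi\psi k}$ in $\X$ with $F(f)=\psi^*g$.
\item Hence $p\vuleq[\Tref{\X}]{\sigma}q_s$, witnessed by the thickening $\phi\circ\psi$.
\end{itemize}
The converse inequality is functoriality, so $\Tref{F}$ is fully faithful.

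For \ref{prop:vufull-pb}, take a pullback of $F$ along some $G:\Z\to\Y$ (in particular along an étale functor $\A\to\Y$). First we describe the pullback.
\begin{itemize}
\item Its objects are pairs $(x,z)$ with $F(x)=G(z)$.
\item Its $\sigma$-arrows $(x,z)\vuto{\sigma}(x_s,z_s)$ are pairs $(f,h)$ with $F(f)=G(h)$.
\item Composition and thickening are computed componentwise; this is the strict pullback in $\vuCAT$.
\end{itemize}
Now let $h:z\vuto{\sigma}z_s$ be a vu-arrow in $\Z$ between objects of the pullback. We get a vu-arrow $G(h):F(x)\vuto{\sigma}F(x_s)$ in $\Y$.
\begin{itemize}
\item Vu-fullness of $F$ gives $\phi:\tau\to\sigma$ and $f:x\vuto{\tau}x_{\phi t}$ with $F(f)=\phi^*G(h)=G(\phi^*h)$.
\item Then $(f,\phi^*h)$ is a vu-arrow of the pullback whose image under the projection is $\phi^*h$, as required.
\end{itemize}
The only point to check is that thickening commutes with $G$, which is part of being a vu-functor. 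Since we only need pullbacks along étale vu-functors, we may avoid discussing general pullbacks in $\vuCAT$. In that case the pullback $F^*\A$ is the total space of the vu-sheaf $A\circ F$, with objects $(x,\xi)$ for $\xi\in A_{F(x)}$, and the argument reads off directly.
\begin{itemize}
\item Given a vu-arrow $g:F(x)\vuto{\sigma}F(x_s)$ in $\Y$ with $g_*\xi=(\xi_s)$, we obtain $f:x\vuto{\tau}x_{\phi t}$ with $F(f)=\phi^*g$.
\item Then $f_*\xi=(\phi^*g)_*\xi=\phi^*(\xi_s)=(\xi_{\phi t})$, using compatibility of the action of $A$ with reindexing.
\item So $f$ lifts to a vu-arrow in $F^*\A$.
\end{itemize}

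For \ref{prop:vufull-topetalement}, combine the two previous items. By \ref{prop:vufull-pb}, $F/A:F^*\A\to\A$ is vu-full. By \ref{prop:vufull-topreflection}, $\Tref{F/A}$ is then vu-full, hence fully faithful by \Cref{prop:vufull-posetal}, since topological reflections are taut.

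The main obstacle is bookkeeping rather than ideas. We must make sure that the pullback $F^*\A$ in \Cref{def:etalements} really has the described vu-arrows, and that reindexing is compatible with the sheaf action, $(\phi^*g)_*\xi=\phi^*(g_*\xi)$. The latter holds because $A$ is a vu-functor into $\Set$, where thickening is the reindexing function $\phi^*$ on ultraproducts.
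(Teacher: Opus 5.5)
Your proof is correct and essentially the paper's: (i) is the same argument (apply vu-fullness to the witnessing arrow and compose the two thickenings), and (iii) is deduced from (i), (ii) and \Cref{prop:vufull-posetal} just as in the paper. The only variation is in (ii): you use the strict pullback (and the total space of $A\circ F$), whereas the paper uses the 2-pullback, whose objects $(\theta,x,y)$ carry an isomorphism $\theta$ by which one conjugates before invoking vu-fullness; for (iii) this makes no difference, since étale vu-functors lift isomorphisms, so strict pullbacks and 2-pullbacks along them agree up to equivalence.
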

\begin{proof}\leavevmode
    \begin{enumerate}
        \item[\pref{prop:vufull-topreflection}] We suppose that $F:\X\to\Y$ is vu-full and we want to show that $\Tref{F}:\Tref{\X}\to\Tref{\Y}$ is vu-full. We take $F(p)\vuleq{\sigma}F(q_s)$ in $\Tref{\Y}$ witnessed by some $g : F(p)\vuto{\tau}F(q_{\phi t})$ in $\Y$ for some $\phi : \tau\to\sigma$, the vu-fullness of $F$ gives us some $f : p\vuto{\lambda}q_{\phi \psi \ell}$ in $\X$ with $\psi:\lambda\to\tau$, showing that $p\vuleq{\sigma}q_s$ in $\Tref{\X}$.
        
        \item[\pref{prop:vufull-pb}] The proof is essentially the same as for usual 1-categories; we will use the explicit description of 2-pullbacks of vu-categories given in \cite[6.2(ii)]{Saa}.
        
        We take $F:\X\to\Z$ and $G:\Y\to\Z$, with $G$ vu-full, and we show that $\X\times_{\Z}\Y\to\X$ is again vu-full. Let $(\theta,x,y)$ and $(\theta_s,x_s,y_s)_{s:\sigma}$ be objects of $\X\times_{\Z}\Y$, \ie $\theta$ (resp. $\theta_s$) is an isomorphism from $F(x)$ to $G(y)$ (resp. from $F(x_s)$ to $G(y_s)$), and let $f: x\vuto{\sigma} x_s$.
        
        We consider 
        \[h \coloneqq \theta_s\circ_{\sigma} F(f) \circ_{\ptuf} \theta^{-1} : G(y)\vuto{\sigma}G(y_s),\]
        by vu-fullness of $G$ we can find some $\phi : \tau\to\sigma$ and some $g:y\vuto{\tau}y_{\phi t}$ such that $G(g) = \phi^*h$. And as $G(g)\circ\theta = (\theta_s)\circ F(f)$, we get a vu-arrow
        \[(\phi^*f,g) : (\theta,x,y)\vuto{\tau}(\theta_{\phi t},x_{\phi t},y_{\phi t})\]
        that is sent to $\phi^*f$ by $\X\times_{\Z}\Y\to\X$.

        \item[\pref{prop:vufull-topetalement}] Follows directly from the previous two. \qedhere
    \end{enumerate}
\end{proof}

Hence, if a vu-functor is vu-full, all its topological étalements are also vu-full (or equivalently by \Cref{prop:vufull-posetal}, fully faithful).
The rest of this section is dedicated to showing that the converse holds, assuming that the codomain is the vu-category of points of a topos. 

\begin{Thm}\label{shc-vufull}
    Let $F : \X\to\Y$ be a vu-functor and $[-]:\dsD\to\Sh(\Y)$ be a small semi-flat diagram of vu-sheaves separating vu-arrows.
    For $F : \X\to\Y$ to be vu-full it suffices that the topological étalements of $F$ above the $([D])_{D:\dsD}$ are fully faithful.
    
    
\end{Thm}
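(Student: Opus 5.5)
The plan is to merge, along an initial ultrafilter on the categories of elements, the vu-arrows produced by fully faithfulness of the topological étalements, and then to check the required equation sheaf-by-sheaf using the separation hypothesis.

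\emph{Step 1: local lifts.} Fix $p$, $(q_s)_{s:\sigma}$ in $\X$ and $g : F(p)\vuto{\sigma}F(q_s)$ in $\Y$. For each $\ell=(D_\ell,\xi_\ell)\in\eM_{\dsD}(F(p))$, choose a representative $(\zeta^\ell_s)_{s:\sigma}$ of $g_*\xi_\ell\in\int_{\sigma}[D_\ell]_{F(q_s)}$. Then $g$ itself is a $\sigma$-arrow $(F(p),\xi_\ell)\vuto{\sigma}(F(q_s),\zeta^\ell_s)$ in the total space of $[D_\ell]$. Hence $(F(p),\xi_\ell)\vuleq{\sigma}(F(q_s),\zeta^\ell_s)$ holds in $\Tref{\et{[D_\ell]}}$. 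The objects $(p,\xi_\ell)$ and $(q_s,\zeta^\ell_s)$ live in $F^*\et{[D_\ell]}$ and map to these. By fully faithfulness of $\Tref{F/[D_\ell]}$, we get $(p,\xi_\ell)\vuleq{\sigma}(q_s,\zeta^\ell_s)$ in $\Tref{F^*\et{[D_\ell]}}$. By \Cref{def:Tref} this means there are a thickening $\phi_\ell:\tau_\ell\to\sigma$ and a vu-arrow $f_\ell : p\vuto{\tau_\ell}q_{\phi_\ell t}$ in $\X$ such that
\[F(f_\ell)_*\xi_\ell = \phi_\ell^*(g_*\xi_\ell).\]

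\emph{Step 2: merging.} By semi-flatness and \Cref{def:lambda}, pick an initial ultrafilter $\lambda$ on $\eM_{\dsD}(F(p))$. Set $\tau\coloneqq\lambda\cdot\tau_\ell$ and $\phi(\ell,t)\coloneqq\phi_\ell(t)$. Since each $\phi_\ell$ pushes $\tau_\ell$ forward to $\sigma$, the pushforward of $\tau$ along $\phi$ is $\sigma$, so $\phi:\tau\to\sigma$ is a map of ultrafilters. Define
\[f\coloneqq f_\ell\circ_\lambda\delta : p\vuto{\lambda\cdot\tau_\ell}q_{\phi_\ell t},\]
where $\delta : p\vuto{\lambda}p$ is the diagonal. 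By the compatibility axioms between thickening and composition, $\phi^*g = (\phi_\ell^*g)\circ_\lambda\delta_{F(p)}$. Also $F(f) = F(f_\ell)\circ_\lambda\delta_{F(p)}$, since $F$ preserves diagonals and composites.

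\emph{Step 3: checking $F(f)=\phi^*g$.} Since $[-]$ separates vu-arrows, it suffices to show that $F(f)_*\xi_0 = (\phi^*g)_*\xi_0$ for every $D_0\in\dsD$ and $\xi_0\in[D_0]_{F(p)}$. Both sides are the $\tau$-families whose $\ell$-components are $F(f_\ell)_*\xi_0$ and $\phi_\ell^*g_*\xi_0$ respectively. Equality in $\int_\tau$ therefore follows from equality of these components for $\lambda$-all $\ell$. By initiality of $\lambda$, for $\lambda$-all $\ell$ there is $u:D_\ell\to D_0$ with $[u]_{F(p)}(\xi_\ell)=\xi_0$. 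Naturality of the vu-sheaf morphism $[u]$, together with Step 1, then gives
\[F(f_\ell)_*\xi_0 = [u]\bigl(F(f_\ell)_*\xi_\ell\bigr) = [u]\bigl(\phi_\ell^*g_*\xi_\ell\bigr) = \phi_\ell^*g_*[u]\xi_\ell = \phi_\ell^*g_*\xi_0,\]
as required.

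The main obstacle I expect is the bookkeeping in Steps 2 and 3. First, one must verify the two identities expressing $\phi^*g$ and $F(f)$ as $\lambda$-composites with a diagonal, using axiom (iii)(a) with $\lambda\to\ptuf$. Second, one must check that componentwise agreement for $\lambda$-all $\ell$ really yields equality in the ultraproduct over $\lambda\cdot\tau_\ell$, via the identification $\int_{\lambda}\int_{\tau_\ell} = \int_{\lambda\cdot\tau_\ell}$. A further point needing care is that the separation hypothesis is applied to two parallel $\tau$-arrows $F(p)\vuto{\tau}F(q_{\phi t})$ in $\Y$, which is legitimate since separation is stated for arbitrary shapes.
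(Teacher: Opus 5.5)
Your proposal is correct and is essentially the paper's proof. The paper pulls the diagram back to $F^*[-]$ over $\X$, uses full faithfulness of the étalements to verify condition (ii) of \Cref{prop:Barr-categorified}, and then invokes \Cref{merging-vumap}, which performs your merging $f_\ell\circ_\lambda\delta$ along an initial ultrafilter and the same naturality chase, before concluding by separation of vu-arrows. You have inlined that lemma, working directly on $\eM_{\dsD}(F(p))$, which is exactly the category of elements of $F^*[-]$ at $p$.
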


Using \Cref{prop:vufull}\pref{prop:vufull-topetalement} and \Cref{prop:semi-flat-exist} we can deduce the following corollary.

\begin{cor}\label{shc-vufull-cor}\leavevmode
    A vu-functor $\X\to\pt(\E)$ is vu-full if and only if all its topological étalements above the $\ev(E)$ with $E\in\E$ are fully faithful.
\end{cor}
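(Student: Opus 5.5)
The corollary should follow directly by combining \Cref{prop:vufull}\pref{prop:vufull-topetalement} for the forward direction with \Cref{shc-vufull} for the converse. \Cref{prop:semi-flat-exist} supplies the semi-flat diagram needed in the converse. No boundedness assumption on $\X$ should be needed, since neither ingredient uses it.

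\emph{Forward direction.} Suppose $F : \X\to\pt(\E)$ is vu-full. Then \Cref{prop:vufull}\pref{prop:vufull-topetalement} says that for every vu-sheaf $A\in\Sh(\pt(\E))$ the topological étalement $\Tref{F/A} : \Tref{F^*\A}\to\Tref{\A}$ is fully faithful. Taking $A = \ev(E)$ for $E\in\E$ gives exactly the stated condition. Recall why this holds: vu-fullness survives pullback along the étale map $\A\to\pt(\E)$ and then topological reflection. Both $\Tref{F^*\A}$ and $\Tref{\A}$ are taut vu-posets, so vu-fullness coincides with full faithfulness by \Cref{prop:vufull-posetal}.

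\emph{Converse.} First choose a small full subcategory $\dsD\subseteq\E$ that is dense and closed under finite products. This is possible because a Grothendieck topos has a small generating set, and closing it under finite products keeps it small and still generating, hence dense. By \Cref{prop:semi-flat-exist}, the functor $\dsD\to\Sh(\pt(\E))$, $D\mapsto\ev(D)$, is a small semi-flat diagram separating vu-arrows of $\pt(\E)$. By hypothesis, the topological étalements of $F$ above each $\ev(D)$, for $D\in\dsD$, are fully faithful. \Cref{shc-vufull}, applied with $\Y = \pt(\E)$ and this diagram, then gives that $F$ is vu-full.

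The only point needing care is the choice of $\dsD$, specifically that the diagram separates vu-arrows of $\pt(\E)$. This is where the hypothesis that the codomain is $\pt(\E)$ is used. It is handled entirely by \Cref{prop:semi-flat-exist}: vu-arrows of $\pt(\E)$ are natural transformations $p^*\Rightarrow\int_{s:\sigma}{q_s}^*$, and these are determined by their components on a dense subcategory. With that in place, the argument is purely a matter of invoking the earlier results; the real work lies in \Cref{shc-vufull} itself.
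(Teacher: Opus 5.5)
Your proposal is correct and follows the paper's route exactly. The forward direction is \Cref{prop:vufull}\pref{prop:vufull-topetalement} specialised to $A=\ev(E)$, and the converse is \Cref{shc-vufull} applied to the small semi-flat, vu-arrow-separating diagram $D\mapsto\ev(D)$ supplied by \Cref{prop:semi-flat-exist}; your argument that such a $\dsD$ exists (a small generating set closed under finite products is dense in a Grothendieck topos) fills in the one detail the paper leaves implicit.
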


The key ingredient to prove \Cref{shc-vufull} is the following lemma, showing how to use a semi-flat diagram to merge vu-arrows together.
\begin{lem}\label{merging-vumap}
    Let $[-] : \dsD\to\Sh(\X)$ be a semi-flat diagram, $p$ and $(q_s)_{s:\sigma}$ objects of $\X$, and maps $(g_D : [D]_p \to \int_{s:\sigma}[D]_{q_s})$ natural in $D\in\dsD$.
    
    To construct some $\phi : \tau\to\sigma$ and $f : p \vuto{\tau}{q_{\phi t}}$ such that
    \[\text{for all $\xi\in [D]_p$ }, f_*(\xi) = \phi^*g_D(\xi)\]
    it suffices to give for each $\xi\in [D]_p$ some $\phi_{\xi} : \tau_{\xi}\to\sigma$ and $f_{\xi} : p\vuto{\tau_{\xi}}{q_{\phi_{\xi} t}}$ such that
    \[{f_{\xi}}_*(\xi) = {\phi_{\xi}}^*(g_D(\xi)).\]
\end{lem}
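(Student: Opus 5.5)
The plan is to merge the local data $(\phi_\xi, f_\xi)$ along an initial ultrafilter on the category of elements, exactly as in the proof of \Cref{prop:Barr}. By semi-flatness and \Cref{def:lambda}, pick an initial ultrafilter $\lambda$ on the objects $\ell=(D_\ell,\xi_\ell)$ of $\eM_{\dsD}(p)$. Set $\tau \coloneqq \lambda\cdot\tau_{\xi_\ell}$, write its elements as $(\ell,t)$, and define $\phi(\ell,t)\coloneqq\phi_{\xi_\ell}(t)$. For $f$, take the composite
\[p\vuto[\delta]{\ell:\lambda}\bigl(p\vuto[f_{\xi_\ell}]{\tau_{\xi_\ell}}q_{\phi_{\xi_\ell}t}\bigr),\]
which is a vu-arrow $f : p\vuto{\tau}q_{\phi(\ell,t)}$.

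The first check is that $\phi$ really defines a morphism of ultrafilters $\tau\to\sigma$, i.e.\ $\phi_*\tau=\sigma$. This should be automatic: each $\phi_{\xi}$ is a morphism $\tau_\xi\to\sigma$, so for a $\sigma$-large $S_0$ we have $(\forall t:\tau_{\xi_\ell})\ \phi_{\xi_\ell}(t)\in S_0$ for every $\ell$, and hence $(\forall (\ell,t):\tau)\ \phi(\ell,t)\in S_0$. Here, unlike in \Cref{prop:Barr}, initiality is not even needed for this step.

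The main point is the identity $f_*(\xi)=\phi^*g_D(\xi)$ for a fixed $\xi\in[D]_p$. Using functoriality of the vu-sheaf $[D]$, we have $f_*(\xi) = \bigl((f_{\xi_\ell})_*\xi\bigr)_{\ell:\lambda}$, since $\delta_*\xi$ is the constant family. By initiality, for $\lambda$-all $\ell$ there is a map $u:D_\ell\to D$ in $\dsD$ with $[u]_p(\xi_\ell)=\xi$. For such $\ell$, naturality of $[u]$ with respect to the vu-arrow $f_{\xi_\ell}$ gives
\[(f_{\xi_\ell})_*\xi = [u]_{q}\bigl((f_{\xi_\ell})_*\xi_\ell\bigr) = [u]_q\bigl(\phi_{\xi_\ell}^*g_{D_\ell}(\xi_\ell)\bigr),\]
where $[u]_q$ is applied componentwise. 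Since thickening commutes with componentwise application of $[u]$, this equals $\phi_{\xi_\ell}^*\bigl(\int[u]_{q_s}\,g_{D_\ell}(\xi_\ell)\bigr)$. By naturality of $g$ in $D$, that is $\phi_{\xi_\ell}^*g_D(\xi)$. Assembling over $\lambda$, which is legitimate because elements of an ultraproduct only need to agree for $\lambda$-all $\ell$, we get $\bigl(\phi_{\xi_\ell}^*g_D(\xi)\bigr)_{\ell:\lambda}$. This is precisely $\phi^*g_D(\xi)$, using the identification $\int_{\ell:\lambda}\int_{t:\tau_\ell}=\int_{\tau}$.

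The hard step is this final bookkeeping. One must justify that $f_*$ of a $\lambda$-composite with the diagonal is computed componentwise, which follows from the vu-functoriality of $[D]$ applied to $\circ_\lambda$. One must also justify that an equality holding only for $\lambda$-almost all $\ell$ suffices, which holds because the equality lives in an ultraproduct over $\lambda\cdot\tau_\ell$. Everything else is naturality.
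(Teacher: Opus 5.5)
Your proposal is correct and follows the paper's proof essentially verbatim: you merge the $f_{\xi_\ell}$ along an initial ultrafilter $\lambda$ on $\eM_{\dsD}(p)$ via the diagonal $\delta$. You then use initiality together with naturality of $[\gamma]$ and of $(g_D)$ to get ${f_{\xi_\ell}}_*(\xi_0) = {\phi_{\xi_\ell}}^*g_{D_0}(\xi_0)$ for $\lambda$-all $\ell$, and your explicit check that $\phi$ is a morphism of ultrafilters, which the paper leaves implicit, is also right.
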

\begin{proof}
    We suppose given for any $\xi\in [D]_p$ some $f_{\xi} : p\vuto{\tau_{\xi}}{q_{\phi_{\xi}t}}$ (with $\phi_{\xi} : \tau_{\xi}\to\sigma$) such that ${f_{\xi}}_*(\xi) = {\phi_{\xi}}^*(g_D(\xi))$.
    We then choose an initial ultrafilter $\lambda$ on $\eM_{\dsD}(p)$ as in \ref{def:lambda} and merge the above $f_{\xi}$ along it
    \[f : p \vuto[\delta]{\ell:\lambda}{(p\vuto[f_{\xi_\ell}]{\tau_{\xi_\ell}}{q_{\phi_{\xi_\ell} t}})}.\]
    For any $\ell_0 = (D_0,\xi_0)$ in $\eM_{\dsD}(p)$ (so $\xi_0\in [D_0]_p$), we have $f_*(\xi_0) = ({f_{\xi_\ell}}_*(\xi_0))_{\ell:\lambda}$, and for $\ell:\lambda$ with $\ell=(D,\xi)$, we can assume by initiality of $\lambda$ that $\ell\geq\ell_0$, and so there is a $\gamma : D\to D_0$ with $[\gamma]_p(\xi) = \xi_0$.
        
    The diagram chase below then shows that ${f_{\xi}}_*(\xi_0) = {\phi_{\xi}}^*g_{D_0}(\xi_0)$.

\[\begin{tikzcd}
	&& {\int_{\sigma}[D]_{q_s}} \\
	\xi & {[D]_p} && {\int_{\tau_{\xi}}[D]_{q_{\phi_{\xi} t}}} \\
	&& {\int_{\sigma}[D_0]_{q_s}} \\
	{\xi_0 = [\gamma]_p(\xi)} & {[D_0]_p} && {\int_{\tau_{\xi}}[D_0]_{q_{\phi_{\xi} t}}}
	\arrow["{{\phi_{\xi}}^*}"{description}, from=1-3, to=2-4]
	\arrow["{([\gamma]_{q_{s}})}"{description, pos=0.7}, from=1-3, to=3-3]
	\arrow["\in"{marking, allow upside down}, draw=none, from=2-1, to=2-2]
	\arrow[dashed, maps to, from=2-1, to=4-1]
	\arrow["{g_D}"{description}, from=2-2, to=1-3]
	\arrow["{[\gamma]_p}"{description, pos=0.7}, from=2-2, to=4-2]
	\arrow["{([\gamma]_{q_{\phi_{\xi} t}})}"{description, pos=0.7}, from=2-4, to=4-4]
	\arrow["{{\phi_{\xi}}^*}"{description}, from=3-3, to=4-4]
	\arrow["\in"{marking, allow upside down}, draw=none, from=4-1, to=4-2]
	\arrow["{g_{D_0}}"{description}, from=4-2, to=3-3]
	\arrow["{{f_{\xi}}_*}"{description, pos=0.7}, from=4-2, to=4-4]
    \arrow["{{f_{\xi}}_*}"{description, pos=0.7}, crossing over, from=2-2, to=2-4]
\end{tikzcd}\]
        \begin{align*}
            &{f_{\xi}}_*(\xi_0) \\ &= {f_{\xi}}_*([\gamma]_p(\xi))  \\
            &= [\gamma]_{q_{\phi_{\xi}(t)}}({f_{\xi}}_*(\xi)) \quad\text{naturality of $[\gamma]$} \\
            &= [\gamma]_{q_{\phi_{\xi}(t)}}({\phi_{\xi}}^*g_D(\xi)) \ \text{the top triangle at $\xi$ by construction of $f_{\xi}$} \\
            &= {\phi_{\xi}}^*[\gamma]_{q_{s}}(g_{D}(\xi)) \quad\text{reindexing of ultraproducts} \\
            &= {\phi_{\xi}}^*g_{D_0}([\gamma]_{p}(\xi)) \quad\text{naturality of the $(g_D)$ in $D$}  \\
            &= {\phi_{\xi}}^*g_{D_0}(\xi_0)   
        \end{align*}
        And denoting $\phi : (\ell:\lambda)\cdot\tau_{\xi_\ell} \stackrel{(\phi_{\xi_\ell})}\to \sigma$,
        \[f_*(\xi_0)  = ({f_{\xi_\ell}}_*(\xi_0))_{\ell:\lambda} = ({\phi_{\xi_\ell}}^*g_{D_0}(\xi_0))_{\ell:\lambda} = {\phi}^*g_{D_0}(\xi_0).\qedhere\]
\end{proof}

\begin{prop}\label{prop:Barr-categorified}
    Let $[-] : \dsD\to\Sh(\X)$ be a semi-flat diagram, $p$ and $(q_s)_{s:\sigma}$ objects of $\X$, and maps $(g_D : [D]_p \to \int_{s:\sigma}[D]_{q_s})$ natural in $D\in\dsD$.
    The following are equivalent:
    \begin{enumerate}
        \item\label{Barr-categorified:i} there exists $f : p \vuto{\tau}q_{\phi t}$, with $\phi : \tau\to\sigma$, such that $f_*(\xi) = \phi^*g_D(\xi)$ for all $\xi\in [D]_p$;
        \item\label{Barr-categorified:ii} for $\xi\in [D]_p$, $(p,\xi)\vuleq{\sigma} (q_s , \zeta_s)$ in $\Tref{\et{[D]}}$ where $(\zeta_s)_{s:\sigma} \coloneqq g_D(\xi)$.
    \end{enumerate}
\end{prop}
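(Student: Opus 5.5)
The plan is to prove the two directions separately. The direction (ii)$\Rightarrow$(i) is exactly what \Cref{merging-vumap} was designed for, and (i)$\Rightarrow$(ii) only requires unwinding the definitions of the total space and of the topological reflection.

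For (i)$\Rightarrow$(ii), take $f : p\vuto{\tau}q_{\phi t}$ with $f_*(\xi) = \phi^*g_D(\xi)$. Fix $\xi\in[D]_p$ and write $(\zeta_s)_{s:\sigma} \coloneqq g_D(\xi)$. Then $f_*\xi = (\zeta_{\phi t})_{t:\tau}$. By the description of the total space $\et{[D]}$, $f$ is a $\tau$-arrow $(p,\xi)\vuto{\tau}(q_{\phi t},\zeta_{\phi t})$ in $\et{[D]}$. By \Cref{def:Tref}, this exhibits $(p,\xi)\vuleq{\sigma}(q_s,\zeta_s)$ in $\Tref{\et{[D]}}$, with thickening $\phi$.

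For (ii)$\Rightarrow$(i), we verify the hypothesis of \Cref{merging-vumap}. Fix $\xi\in[D]_p$ and again set $(\zeta_s)_{s:\sigma} \coloneqq g_D(\xi)$. By (ii) and \Cref{def:Tref}, there are $\phi_\xi : \tau_\xi\to\sigma$ and a vu-arrow $(p,\xi)\vuto{\tau_\xi}(q_{\phi_\xi t},\zeta_{\phi_\xi t})$ in $\et{[D]}$. By definition of the total space, this is a vu-arrow $f_\xi : p\vuto{\tau_\xi}q_{\phi_\xi t}$ in $\X$ with ${f_\xi}_*(\xi) = (\zeta_{\phi_\xi t})_{t:\tau_\xi} = {\phi_\xi}^*g_D(\xi)$. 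These are precisely the local data required by \Cref{merging-vumap}. Since $[-]$ is semi-flat and the $g_D$ are natural in $D$, the lemma produces $\phi:\tau\to\sigma$ and $f : p\vuto{\tau}q_{\phi t}$ with $f_*(\xi) = \phi^*g_D(\xi)$ for all $D$ and all $\xi\in[D]_p$, which is (i).

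There is no real obstacle, since all the work was done in \Cref{merging-vumap}. The only points needing care are these:
\begin{enumerate}
\item Reindexing of a family along $\phi$ must match the description of vu-arrows in the total space, namely that $f_*\xi$ equals the thickened target family.
\item In (ii)$\Rightarrow$(i), the choice of $f_\xi$ for each $\xi$ is made via the axiom of choice over the small set of pairs $(D,\xi)$. This is permitted because $\dsD$ is small.
\end{enumerate}
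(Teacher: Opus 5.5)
Your proposal is correct and follows the paper's proof: (i)$\Rightarrow$(ii) by reading $f$ as a vu-arrow $(p,\xi)\vuto{\tau}(q_{\phi t},\zeta_{\phi t})$ in $\et{[D]}$, and (ii)$\Rightarrow$(i) by \Cref{merging-vumap}. Your added details are sound: extracting each $f_\xi$ from the definition of $\Tref{\et{[D]}}$, and choosing these over the small set of pairs $(D,\xi)$.
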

\begin{proof}\leavevmode
    \begin{enumerate}
        \item[\pref{Barr-categorified:i} $\Rightarrow$ \pref{Barr-categorified:ii}] By hypothesis we have an $f : p \vuto{\tau}{q_{\phi t}}$ such that $f_*(\xi) = (\zeta_{\phi t})_{t:\tau}$, so we have a vu-arrow $(p,\xi)\vuto{}{(q_{\phi t},\zeta_{\phi t})}$ in $\et{[D]}$, and \pref{Barr-categorified:ii} follows.
        
        \item[\pref{Barr-categorified:ii} $\Rightarrow$ \pref{Barr-categorified:i}] Follows from \Cref{merging-vumap}.\qedhere
    \end{enumerate}
\end{proof}

\begin{rmk}\label{rmk:Barr-categorified}
    \Cref{cor:Barr-taut} ensures that \pref{Barr-categorified:ii} is equivalent to
    \begin{enumerate}\setcounter{enumi}{2}
        \item \label{Barr-categorified:iii} for any $A\subseteq [D]$ sub-vu-sheaf, $g_D$ restricts to a map $A_p \to \int_{s:\sigma}A_{q_s}$,
    \end{enumerate}
    and we recover \Cref{prop:Barr} by looking at the implication \pref{Barr-categorified:iii}$\Rightarrow$\pref{Barr-categorified:i} for the terminal diagram $1\to\Sh(\X)$, that is, the diagram pointing on the terminal vu-sheaf.
    In this case of the terminal diagram, we can give the following logical interpretation: if any closed formula satisfied by $p$ is also satisfied by the $(q_s)_{s:\sigma}$, then there exists a homomorphism from $p$ into some thickening of $(q_s)_{s:\sigma}$.
    The case of a general diagram can be seen as a generalisation allowing parameters: if a $\dsD$-diagram of set-functions from $p$ to $(q_s)_{s:\sigma}$ defined on the underlying sorts prescribed by $\dsD$ respects all the formulas with parameters, then it can be thickened to become a homomorphism of models.
\end{rmk}

        
        
    

We can finally prove \Cref{shc-vufull}.
\begin{proof}[Proof of \Cref{shc-vufull}]

    Let $F : \X\to\Y$ be a vu-functor and $[-] : \dsD\to\Sh(\Y)$ a small semi-flat diagram separating vu-arrows such that the vu-poset morphism $\Tref{F/[D]} : \Tref{F^*\et{[D]}} \to \Tref{\et{[D]}}$ is fully faithful for each $D\in\dsD$.
    %
    We want to show that $F$ is vu-full. We take some $g : F(p)\vuto{\sigma}{F(q_s)}$, and we want to construct a $f:p\vuto{\tau}{q_{\phi t}}$ with $\phi:\tau\to\sigma$ such that $F(f) = \phi^*g$. 
    
    This $f$ will be constructed by using \Cref{prop:Barr-categorified} with the diagram
    \[\llbracket - \rrbracket \coloneqq F^*[-] : \dsD\to\Sh(\X)\]
    which is a semi-flat diagram (because $F^* : \Sh(\Y)\to\Sh(\X)$ is lex).
    We show that the natural maps 
    \[(g_D : \llbracket D \rrbracket_p \to\textstyle\int_{s:\sigma}\llbracket D \rrbracket_{q_s})_{D:\dsD}\]
    given by pushing along $g$, satisfy \Cref{prop:Barr-categorified}\pref{Barr-categorified:ii}, \ie that for all $\xi\in \llbracket D \rrbracket_p$
    \[(p,\xi)\vuleq{\sigma}(q_s,\zeta_s) \text{ in } \Tref{\et{\llbracket D \rrbracket}} \text{ , with }(\zeta_s)_{s:\sigma} \coloneqq g_*(\xi).\]
    
    These points are mapped by the fully faithful map $\Tref{\et{\llbracket D \rrbracket}}\to \Tref{\et{[D]}}$ to respectively $(F(p),\xi)$ and $(F(q_s),\zeta_s)$, and we have the vu-inequality $(F(p),\xi)\vuleq{\sigma}(F(q_s),\zeta_s)$ in $\Tref{\et{[D]}}$ as $g$ induces a vu-arrow $(F(p),\xi)\vuto{\sigma}(F(q_s),\zeta_s)$ in $\et{[D]}$. 
    
    Hence these $(g_D)$ satisfy \ref{prop:Barr-categorified}\pref{Barr-categorified:ii} and \ref{prop:Barr-categorified}\pref{Barr-categorified:i} gives us some vu-arrow $f : p\vuto{\tau}(q_{\phi t})$ with $\phi : \tau\to\sigma$ such that 
    \[f_*(\xi) = \phi^*g_D(\xi) \text{ , for all }  \xi\in \llbracket D \rrbracket_p,\]
    \ie $F(f)_*(\xi) = (\phi^*g)_*(\xi)$ for all $\xi\in [D]_{F(p)}$. And $\dsD$ separating vu-arrows let us conclude that $F(f) = \phi^*g$. 
\end{proof}

\subsection{Subhyperconnectedness}\label{sec:shc}

In \cite[A4.6.10]{johnstone:elephant}, Johnstone characterises full functors between small categories by their presheaf toposes: a functor is full if and only if the geometric morphism it induces is a composite of a hyperconnected map followed by an embedding. We call such geometric morphisms \emph{subhyperconnected}.

\begin{defn}
    A geometric morphism $f:\F\to\E$ is said to be \emph{subhyperconnected} if it can be written as a composite $f = i \circ h$ of $h$ hyperconnected followed by $i$ an embedding.
\end{defn}

We recall the following stability properties of subhyperconnected geometric morphisms, analogous to the one proved for vu-full vu-functors.

\begin{prop}\label{shc-localic}\leavevmode 
    \begin{enumerate}
        \item\label{shc-localic-i} Subhyperconnected geometric morphisms are stable under localic reflections, and the localic reflection of a subhyperconnected map is an embedding of locales.
        \item\label{shc-ref} Subhyperconnected morphisms are stable under pullback.
        \item\label{shc-slices} A geometric morphism $f : \F\to\E$ is subhyperconnected if and only if its localic slices are embeddings of locales, \ie $\Lref{f/E}  : \Lref{\F/f^*E} \mono\Lref{\E/E}$ is an embedding of locales for any $E\in\E$.
    \end{enumerate}
\end{prop}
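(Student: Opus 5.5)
The plan is to rely on the (hyperconnected, localic) factorisation of a geometric morphism and on standard facts from the Elephant. The key observation is that subhyperconnectedness is the same as the localic part of that factorisation being an embedding. Write $f = m \circ h$ with $h : \F \to \G$ hyperconnected and $m : \G \to \E$ localic. If $f = i \circ h'$ with $h'$ hyperconnected and $i$ an embedding, then $i$ is localic, since embeddings are localic. Uniqueness of the factorisation (A4.6.5) then identifies $m$ with $i$. Hence $f$ is subhyperconnected if and only if $m$ is an embedding.

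For \pref{shc-localic-i}, apply $\Lref{-}$ to $f = i\circ h$. Hyperconnected morphisms induce isomorphisms on subterminal frames, since $h^*$ restricts to an isomorphism $\Sub(1)\cong\Sub(1)$ for $h$ hyperconnected. So $\Lref{f} \cong \Lref{i}$. An embedding $i$ induces a surjection of frames $\Sub_\E(1)\to\Sub_\G(1)$, because every subterminal of $\G$ is $i^*i_*$ of itself. Thus $\Lref{i}$ is a sublocale inclusion, and in particular $\Lref{f}$ is an embedding of locales, hence subhyperconnected.

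For \pref{shc-ref}, pull back $f = m\circ h$ along $g : \E'\to\E$. Both localic morphisms and embeddings are stable under pullback. Hyperconnected morphisms are stable under pullback along arbitrary geometric morphisms (Elephant C3.? / A4.6.6 for pullback along localic maps, together with the general result for hyperconnected maps). So $g^*f = (g^*m)\circ(\text{pullback of } h)$ is again of the form embedding after hyperconnected. If only pullback of hyperconnected maps along localic maps is available, the fallback is to factor $g$ as hyperconnected-localic and treat the two pieces separately; this is the step I would double-check against the literature.

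For \pref{shc-slices}, the forward direction follows by combining the previous two parts. The slice $f/E$ is the pullback of $f$ along the étale (hence localic) map $\E/E\to\E$, so it is subhyperconnected, and its localic reflection is an embedding by \pref{shc-localic-i}. The converse is the main obstacle.

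For the converse, it suffices to show that the localic part $m : \G\to\E$ is an embedding, i.e.\ that the counit $m^*m_*\to\id$ is an isomorphism, or equivalently that every object of $\G$ is of the form $m^*$ of something up to iso. The plan is to use that $\G$ is generated by subobjects of objects $m^*E$, since $m$ is localic. Subobjects of $m^*E$ in $\G$ correspond to subterminals of $\G/m^*E$. These in turn correspond to subterminals of $\F/f^*E$, because $h/E$ is again hyperconnected and so induces an isomorphism on $\Sub(1)$. That is, they are opens of $\Lref{\F/f^*E}$. Surjectivity of $\Sub_{\E/E}(1)\to\Sub_{\F/f^*E}(1)$, which is the embedding hypothesis on the localic slice, then says that every subobject of $m^*E$ is $m^*$ of a subobject of $E$. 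So $m^*$ is full on subobjects of its image.

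From this I would deduce that $m^*$ is full and essentially surjective onto a generating family closed under subobjects. For fullness, a morphism $m^*E\to m^*E'$ is a subobject of $m^*(E\times E')$ and is therefore $m^*$ of a relation, which is functional because $m^*$ is conservative on subobject lattices. This yields that $m$ is an embedding, via the characterisation that a localic morphism whose inverse image is full on subobjects is an inclusion (A4.6.? / the Elephant's criterion for embeddings). The technical care lies in the identification $\Sub_\G(m^*E)\cong\Sub(1)$ of the slice $\F/f^*E$, which uses that hyperconnected maps are stable under slicing.
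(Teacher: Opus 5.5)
Your treatment of (i), (ii) and the forward half of (iii) is fine and matches the paper. Hyperconnected morphisms are pullback-stable along arbitrary geometric morphisms, so you can drop the fallback in (ii); it would in any case still require pulling back a hyperconnected map along a hyperconnected one. Your reduction in the converse of (iii) is also correct. Via $h^*\colon \Sub_\G(m^*E)\cong\Sub_\F(f^*E)$, the hypothesis says exactly that $m^*\colon\Sub_\E(E)\to\Sub_\G(m^*E)$ is surjective for every $E$.

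The gap is the last step, from this to ``$m$ is an embedding''. The statements you lean on there are false.
\begin{itemize}
\item Every object being of the form $m^*E$ is necessary but not sufficient for an embedding, even for localic $m$. The point $\Set\to\mathbf{B}G$ of the classifying topos of a non-trivial group is étale (the slice over $G$ acting on itself), hence localic. Its inverse image, the forgetful functor, is essentially surjective, yet it is not an embedding.
\item Fullness of $m^*$ is the wrong target, since inverse images of embeddings are not full in general. For an open subtopos $\E/U\hookrightarrow\E$ with $U\neq 1$ subterminal, $U$ and $1$ have isomorphic inverse images while $\E(1,U)=\emptyset$.
\item $m^*$ is not injective on subobject lattices unless $m$ is a surjection. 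So your argument that the relation is functional has nothing to rest on.
\end{itemize}
What remains is the unnamed criterion ``localic and surjective on subobjects implies inclusion''. It is true, but it is precisely the non-trivial content of the converse.

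The missing ingredient is the surjection--embedding factorisation, whose surjective part supplies the injectivity on subobjects you lack. Write $m=j\circ q$ with $q\colon\G\to\G'$ a surjection and $j$ an embedding.
\begin{itemize}
\item For $G'\in\G'$ one has $G'\cong j^*j_*G'$. So surjectivity of $m^*=q^*j^*$ on $\Sub_\E(j_*G')$ forces $q^*\colon\Sub_{\G'}(G')\to\Sub_\G(q^*G')$ to be surjective.
\item This map is injective because $q^*$ is conservative.
\item Thus $q$ is hyperconnected (Elephant A4.6.6). It is also localic since $m$ is, so $q$ is an equivalence and $m\simeq j$.
\end{itemize}

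The paper runs this argument directly on the surjection--embedding factorisation $f=i\circ p$, in locale language. Each $\Lref{p/G}$ is a surjection, as the localic slice of a surjection. It is also an embedding, by cancellation, since $\Lref{f/i_*G}$ is one. Hence it is an isomorphism, so $p$ is hyperconnected. Done this way, your detour through the hyperconnected--localic factorisation is not needed at all.
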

\begin{proof}\leavevmode
    \begin{enumerate}
        \item The localic reflection preserves embeddings \cite[A4.6.12]{johnstone:elephant} and sends hyperconnected morphisms to equivalences \cite[A4.6.6]{johnstone:elephant}.
        \item Hyperconnected geometric morphisms are stable under pullbacks \cite[B3.3.7]{johnstone:elephant}, as well as embeddings \cite[A4.5.14(e)]{johnstone:elephant}.
        \item For the non-trivial direction of the last claim, we consider $f = i \circ p$ the surjection--embedding factorisation of $f$
        \[f : \F\stackrel{p}\twoheadrightarrow\G\stackrel{i}\hookrightarrow\E\]
        and we show that $p$ is hyperconnected. 
        Let $G\in\G$, we can write the following factorisation, where $E\coloneqq i_*G$ (and so $G = i^*E$ as $i$ is an embedding).
        \[\begin{tikzcd}
        	{\Lref{\F/p^*G} } && {\Lref{\E/i_*G} } \\
        	& {\Lref{\G/G} }
        	\arrow["{\Lref{f/E} }", from=1-1, to=1-3]
        	\arrow["{\Lref{p/G} }"', from=1-1, to=2-2]
        	\arrow["{\Lref{i/E} }"', from=2-2, to=1-3]
        \end{tikzcd}\]
        By \cite[A4.6.6(vi)]{johnstone:elephant} it is enough to show that $\Lref{p/G}  :\Lref{\F/p^*G} \to\Lref{\G/G} $ induces an equivalence of locales. The morphism of locales $\Lref{p/G} $ is surjective as the localic slice of a surjective geometric morphism (see \Cref{prop:surj-prop} below); moreover, the assumption that $\Lref{f/E} $ is an embedding makes $\Lref{p/G} $ one too by the cancellation properties of embeddings. As a result $\Lref{p/G} $ is an equivalence as a geometric morphism which is both an embedding and a surjection \cite[A4.2.11]{johnstone:elephant}. \qedhere

        %
        
    \end{enumerate}
\end{proof}

We can finally compare subhyperconnected geometric morphisms and vu-full vu-functors. We first begin with a lemma establishing the 0-dimensional case.
\begin{lem}\label{lem:full-0dim}
    A continuous map of topological spaces is fully faithful (as a morphism of vu-posets) if and only if the morphism of locales it induces is an embedding.
    Conversely, a morphism between locales with enough points is an embedding if and only if it induces a fully faithful vu-functor on its points. 
\end{lem}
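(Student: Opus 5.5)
We reduce both statements to a comparison between the order structure of opens and the vu-inequalities, using \Cref{cor:Barr-taut}.

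\emph{First statement.} Let $f : X\to Y$ be a continuous map of topological spaces. By \Cref{prop:vufull-posetal} and \Cref{cor:Barr-taut}, $f$ is fully faithful as a vu-poset morphism exactly when the following holds for all $p$ and $(q_s)_{s:\sigma}$ in $X$: if every open $V$ of $Y$ containing $f(p)$ contains $f(q_s)$ for $\sigma$-all $s$, then every open $U$ of $X$ containing $p$ contains $q_s$ for $\sigma$-all $s$.

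The locale map $\Open(Y)\to\Open(X)$, $V\mapsto f^{-1}V$, is an embedding precisely when the frame map $f^{-1}$ is surjective. That is, every open of $X$ has the form $f^{-1}V$, which says $X$ carries the topology induced from $Y$ along $f$.

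If every open of $X$ is of the form $f^{-1}V$, full faithfulness is immediate. For the converse, fix an open $U$ of $X$ and set $W \coloneqq \bigcup\{f^{-1}V : f^{-1}V\subseteq U\}$. We must show $U\subseteq W$. Suppose $p\in U\setminus W$. Then for every open $V\ni f(p)$, the set $f^{-1}V$ meets $X\setminus U$.

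These sets $f^{-1}V\cap(X\setminus U)$ are closed under finite intersections, so by \Cref{lem:extend-uf} they extend to an ultrafilter $\sigma$ on $X\setminus U$. Taking $q_s \coloneqq s$, the ultrafamily $(f(q_s))_{s:\sigma}$ converges to $f(p)$. By full faithfulness, $(q_s)$ then converges to $p$, so $q_s\in U$ for $\sigma$-all $s$. This contradicts $q_s\notin U$.

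The only delicate point is this ultrafilter construction. Injectivity is not needed, because the vu-poset of $X$ is taut and not necessarily $T_0$.

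\emph{Second statement.} Let $L,M$ be locales with enough points and let $g : L\to M$ be a locale morphism. Since they have enough points, $L\cong\Open(\pt L)$ and $M\cong\Open(\pt M)$ with $\pt L,\pt M$ sober, and $g$ corresponds to the continuous map $\pt(g)$. Hence $g$ is an embedding iff $\Open(\pt(g))$ is, and by the first part this holds iff $\pt(g)$ is fully faithful as a vu-poset morphism.

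By \Cref{prop:rpt=ptr} and tautness, $\pt(g)$ as a map of vu-posets coincides with the vu-functor on points $\pt(\Linj g)$ of the localic toposes. For localic toposes, $\pt$ is already posetal, so this identification is harmless.

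The main obstacle is the converse direction of the first statement, namely producing the ultrafamily that witnesses failure of full faithfulness; the rest is bookkeeping with soberness.
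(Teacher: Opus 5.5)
Your proof is correct, but it takes a more elementary route than the paper.

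\emph{Forward direction.} The paper does not argue with opens here. It invokes \Cref{cor:emb-ff}: an embedding of toposes is fully faithful on points, via the lemma on functors with a fully faithful right adjoint. This gives the direction for arbitrary locales, with no spatiality hypothesis. You instead observe that when $X$ carries the topology induced along $f$, full faithfulness is immediate from the definition of convergence.

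\emph{Converse.} The paper replaces $f$, up to equivalence, by the inclusion of a sub-vu-poset and then cites \Cref{prop:opens-are-upsets}. Full faithfulness says that the convergence of $X$ coincides with the convergence of the topology induced from $Y$. Since a topology is determined by its convergence, the two topologies agree. Your ultrafilter construction on $X\setminus U$ is precisely the argument behind the hard direction of \Cref{prop:opens-are-upsets} (equivalently \Cref{lem:closure}), specialised to this situation. So you re-prove inline what the paper cites. The paper's version is shorter; yours is self-contained, needing only \Cref{lem:extend-uf}. It also handles non-injective $f$ directly, without the \emph{up to equivalence} step.

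\emph{Locale statement.} Your reduction to the spatial case via $L\cong\Open(\pt L)$ is fine, and makes explicit what the paper leaves implicit.

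Two minor remarks:
\begin{enumerate}
\item Citing \Cref{prop:vufull-posetal} and \Cref{cor:Barr-taut} in your first sentence is unnecessary. For spaces, the criterion you state is literally the definition of $\vuleq[X]{\sigma}$ plus the fact that homsets of vu-posets are subsingletons.
\item Injectivity is not needed simply because your argument never uses it. A locale embedding out of a non-$T_0$ space need not be injective on points; tautness of $X$ plays no role in this.
\end{enumerate}
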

\begin{proof}
    An embedding of locales, or an embedding of topological spaces, induces a fully faithful functor on the points by \Cref{cor:emb-ff}. Conversely, if $f : X\to Y$ is a fully faithful vu-functor between topological spaces seen as vu-posets, then up to equivalence we can suppose that $f$ is an inclusion of a sub-vu-poset of $Y$, and \Cref{prop:opens-are-upsets} shows that the opens of $X$ are exactly the restriction of the opens of $Y$.
\end{proof}

\begin{Thm}\leavevmode\label{thm:vufull}
    \begin{enumerate}
        \item\label{thm:vufull:iii} Let $F : \X\to\Y$ be a vu-full vu-functor between bounded vu-categories, then $\Sh(F)$ is subhyperconnected. 
        \item\label{thm:vufull:i} Let $\X$ be a bounded vu-category, then $\eta_{\X} : \X\to\pt(\Sh(\X))$ is vu-full.
        \item\label{thm:vufull:ii} Let $f: \F\to\E$ be a subhyperconnected geometric morphism, then $\pt(f)$ is vu-full.
    \end{enumerate}
\end{Thm}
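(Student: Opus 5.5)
The proof reduces all three items to the 0-dimensional case. On the topos side, subhyperconnectedness is detected on localic slices (\Cref{shc-localic}\ref{shc-slices}). On the vu-side, vu-fullness is inherited by topological étalements (\Cref{prop:vufull}\pref{prop:vufull-topetalement}) and, when the codomain is of the form $\pt(\E)$, is detected on them (\Cref{shc-vufull-cor}). The bridge between the two sides is \Cref{lem:full-0dim}, together with \Cref{prop:loc-slice}, \Cref{circular-square} and \Cref{prop:rpt=ptr}.

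For \ref{thm:vufull:iii}, let $F : \X\to\Y$ be vu-full and $B\in\Sh(\Y)$. By \Cref{prop:loc-slice}, the localic slice of $\Sh(F)$ above $B$ is $\shloc(\Tref{F^*\B})\to\shloc(\Tref{\B})$. By \Cref{prop:vufull}\pref{prop:vufull-topetalement}, the étalement $\Tref{F^*\B}\to\Tref{\B}$ is a fully faithful map of taut vu-posets. These are topological spaces by \Cref{prop:Tref-adjoint}, since $\F^*\B$ and $\B$ are bounded, being étale over bounded vu-categories. By \Cref{lem:full-0dim}, the induced morphism of locales is an embedding. Since $B$ is arbitrary, \Cref{shc-localic}\ref{shc-slices} gives that $\Sh(F)$ is subhyperconnected.

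For \ref{thm:vufull:ii}, let $f:\F\to\E$ be subhyperconnected. By \Cref{shc-vufull-cor}, it suffices to show that each topological étalement of $\pt(f)$ above $\ev(E)$ is fully faithful. By \Cref{prop:etalements}\ref{prop:etalements:ii}, this étalement is $\Tref{\pt(\F/f^*E)}\to\Tref{\pt(\E/E)}$. It sits in a commutative square whose vertical maps are the comparisons $\Tref{\pt(-)}\to\pt(\Lref{-})$, which are fully faithful by \Cref{prop:rpt=ptr}, and whose bottom map is $\pt(\Lref{f/E})$. That bottom map is fully faithful by \Cref{cor:emb-ff}, because $\Lref{f/E}$ is an embedding of locales by \Cref{shc-localic}\ref{shc-slices}. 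If $G\circ V = W\circ T$ with $V,W,G$ fully faithful, then $T$ is fully faithful, at least for faithfulness and fullness on vu-posets. Indeed, $T(p)\vuleq{\sigma} T(q_s)$ gives $W T(p)\vuleq{\sigma} WT(q_s)$, hence $GV(p)\vuleq{\sigma} GV(q_s)$, hence $p\vuleq{\sigma} q_s$. So the top map is fully faithful and $\pt(f)$ is vu-full.

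For \ref{thm:vufull:i}, I would again use \Cref{shc-vufull-cor} for the codomain $\pt(\Sh(\X))$. Since the $\ev(A)$ with $A\in\Sh(\X)$ exhaust the relevant vu-sheaves, I need the étalement above $\ev(A)$ to be fully faithful. By \Cref{prop:etalements}\ref{prop:etalements:i}, it is $\Tref{\A}\to\Tref{\pt(\Sh(\A))}$. Compose it with the fully faithful comparison $\Tref{\pt(\Sh(\A))}\to\pt(\Lref{\Sh(\A)})$ from \Cref{prop:rpt=ptr}, and identify $\Lref{\Sh(\A)}$ with $\shloc(\Tref{\A})$ via \Cref{circular-square}. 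The composite is then the canonical map from the topological space $\Tref{\A}$ to the points of its locale of opens, i.e.\ to its sobrification. This map is fully faithful, by \Cref{lem:full-0dim} applied to the identity, or directly from \Cref{prop:opens-are-upsets}. By the same cancellation as above, the étalement is fully faithful.

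The main obstacle will be the bookkeeping in these squares. One has to check that the composite in \ref{thm:vufull:i} really is the canonical point map under the identification of \Cref{circular-square}. One also has to justify the boundedness of étalements, so that \Cref{prop:Tref-adjoint} applies; I expect this to follow since $\Sh(\A)\simeq\Sh(\X)/A$ is a topos.
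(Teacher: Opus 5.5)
Your proposal is correct and follows the paper's proof essentially step for step. The first item goes through the localic slices via \Cref{prop:loc-slice}, \Cref{prop:vufull}, \Cref{lem:full-0dim} and \Cref{shc-localic}. The other two items go through \Cref{shc-vufull-cor}, using the same commutative squares built from \Cref{prop:rpt=ptr} and \Cref{circular-square} and the same cancellation of fully faithful maps.
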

\begin{proof}\leavevmode
    \begin{enumerate}
        \item We show the hypothesis of \Cref{shc-localic}\pref{shc-slices}. We take $B\in\Sh(\Y)$, by \Cref{prop:loc-slice} we have to show that 
        \[\shloc(\Tref{F^*\B}) \to \shloc(\Tref{\B})\] is an embedding of locales.
        
        By \Cref{prop:vufull}\pref{prop:vufull-topetalement} $\Tref{F^*\B} \to \Tref{\B}$ is fully faithful, and we conclude by \Cref{lem:full-0dim}.

        \item We show the hypothesis of \Cref{shc-vufull-cor}. We take $A\in\Sh(\X)$, by \Cref{prop:etalements}\ref{prop:etalements:i} we have to show that 
        \[\Tref{\A}\to\Tref{\pt(\Sh(\A))}\]
        is fully faithful.

        We have
        \[\begin{tikzcd}
        	{\Tref{\A}} & {\Tref{\ptvu(\shvu(\A))}} \\
        	{\ptloc(\shloc(\Tref{\A})))} & {\ptloc(\Lref{\shvu(\A)})}
        	\arrow[from=1-1, to=1-2]
        	\arrow[hook, from=1-1, to=2-1]
        	\arrow[hook, from=1-2, to=2-2]
        	\arrow["{\cong}", from=2-1, to=2-2]
        \end{tikzcd}\]
        where $\ptloc(\shloc(\Tref{\A})))\cong \ptloc(\Lref{\shvu(\A)})$ by \Cref{circular-square}, 
        $\Tref{\ptvu(\shvu(\A))}\mono\ptloc(\Lref{\shvu(\A)})$ is fully faithful by \Cref{prop:rpt=ptr},
        $\Tref{\A}\to\ptloc(\shloc(\Tref{\A})))$ is fully faithful by \Cref{lem:full-0dim},
        and we conclude by cancellative property of fully faithful vu-functors.
        
        \item We will show that $\pt(f)$ satisfies \ref{shc-vufull-cor}. We take $A\in\E$, by \Cref{prop:etalements}\ref{prop:etalements:ii} we have to show that
        \[\Tref{\pt(\F/f^*A)} \to \Tref{\pt(\E/A)}\]
        is fully faithful.

        We have
        \[\begin{tikzcd}
        	{\Tref{\pt(\F/f^*A)}} & {\Tref{\pt(\E/A)}} \\
        	{\pt(\Lref{\F/f^*A})} & {\pt(\Lref{\E/A})}
        	\arrow[from=1-1, to=1-2]
        	\arrow[hook, from=1-1, to=2-1]
        	\arrow[hook, from=1-2, to=2-2]
        	\arrow[hook, from=2-1, to=2-2]
        \end{tikzcd}\]
        where $\Tref{\pt(\F/f^*A)}\mono\pt(\Lref{\F/f^*A})$ and $\Tref{\pt(\E/A)}\mono\pt(\Lref{\E/A})$ are fully faithful by \Cref{prop:rpt=ptr},
        $\pt(\Lref{\F/f^*A} )\to \pt(\Lref{\E/A})$ is fully faithful by \Cref{shc-localic}\pref{shc-ref} and \Cref{lem:full-0dim},
        and we conclude by cancellative property of fully faithful vu-functors.\qedhere
     \end{enumerate}
\end{proof}

We can fill one more line of (\ref{tableau}). 

\begin{cor}\label{table:shc}
    \

    \centerline{
    \begin{tabular}{r l||C C|C C|C C|}
    & & \multicolumn{2}{c|}{$f\squig\pt(f)$}&
        \multicolumn{2}{c|}{$\Sh(F)\squig F$}&
        \multicolumn{2}{c|}{$f\squig F$}\\
    \cline{3-8}
    geometric morphisms & vu-functors & $\Rightarrow$ & $\Leftarrow$ & $\Rightarrow$ & $\Leftarrow$ & $\Rightarrow$ & $\Leftarrow$ \\
    \cline{1-8}
    subhyperconnected & vu-full
        & \checkmark & \checkmark
        & $\crossmark$ & \checkmark 
        & \checkmark & \checkmark
    \end{tabular}
    }
\end{cor}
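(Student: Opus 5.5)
The plan is to read off each of the six cells of the table from \Cref{thm:vufull}, the reconstruction theorem (\Cref{thm:reconstruction}) and the toy vu-category of \Cref{counterex}.

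\emph{Column $f\squig\pt(f)$, both with $\E,\F$ having enough points.} The direction $\Rightarrow$ is exactly \Cref{thm:vufull}\pref{thm:vufull:ii}. For $\Leftarrow$, suppose $\pt(f)$ is vu-full. Then \Cref{thm:vufull}\pref{thm:vufull:iii} shows that $\Sh(\pt(f))$ is subhyperconnected. Since $\E$ and $\F$ have enough points, the counits $\ev$ are equivalences, and by \Cref{thm:reconstruction} $\Sh(\pt(f))$ is identified with $f$ up to these equivalences. Subhyperconnected morphisms are closed under composition with equivalences, so $f$ is subhyperconnected.

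\emph{Column $\Sh(F)\squig F$.} The direction $\Leftarrow$ is \Cref{thm:vufull}\pref{thm:vufull:iii}. For the $\crossmark$ in direction $\Rightarrow$, take $F=\eta_\X:\X\to[1]$ with $\X$ from \Cref{counterex}. Then $\Sh(F)$ is an equivalence, hence subhyperconnected. I need $F$ to fail vu-fullness, and this is the one step requiring a small check. The objects $a,b$ are distinct and $\Hom_\sigma(b,a)$ is empty in $\X$, so I try to exploit the pair $b,a$. I expect $[1]$ to have a vu-arrow $F(b)\vuto{}F(a)$ in the appropriate direction, namely the one corresponding to the Sierpiński order. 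No thickening of it can lift, because $\X$ has no vu-arrows from $b$ to $a$ of any shape. If the orientation of the walking arrow turns out opposite, I would instead modify the example (for instance swapping roles), or reflect the counterexample accordingly.

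\emph{Column $f\squig F$, with $f:\Sh(\X)\to\E$ adjunct to $F:\X\to\pt(\E)$.} Here $F=\pt(f)\circ\eta_\X$, and $f$ corresponds to $\Sh(F)$ composed with the counit equivalence $\E\simeq\Sh(\pt(\E))$.
\begin{enumerate}
\item For $\Leftarrow$: if $F$ is vu-full, then $\Sh(F)$ is subhyperconnected by \Cref{thm:vufull}\pref{thm:vufull:iii}, and therefore so is $f$.
\item For $\Rightarrow$: if $f$ is subhyperconnected, then $\pt(f)$ is vu-full by \Cref{thm:vufull}\pref{thm:vufull:ii}, and $\eta_\X$ is vu-full by \Cref{thm:vufull}\pref{thm:vufull:i}. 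It remains to show that vu-full vu-functors compose.
\end{enumerate}

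The composition check goes as follows. Take $g:GF(p)\vuto{\sigma}GF(q_s)$. Vu-fullness of $G$ gives $\phi:\tau\to\sigma$ and $h$ with $G(h)=\phi^*g$. Vu-fullness of $F$ then gives $\psi$ and $f$ with $F(f)=\psi^*h$. Hence
\[GF(f)=\psi^*\phi^*g=(\phi\psi)^*g,\]
using that vu-functors commute with thickening.

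I expect the main obstacle to be the counterexample cell, namely correctly identifying the direction of the vu-arrows in $[1]$ relative to $\X$. Everything else is bookkeeping.
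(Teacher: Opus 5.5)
Your treatment of the six positive cells is correct and matches the paper, which simply says they follow from \Cref{thm:reconstruction} and \Cref{thm:vufull}. Your explicit check that vu-full vu-functors compose, needed for $F=\pt(f)\circ\eta_\X$, is a step the paper leaves implicit, and it is right.

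The $\crossmark$ cell fails, however, because $\eta_\X\colon\X\to[1]$ \emph{is} vu-full. This is exactly \Cref{thm:vufull}\pref{thm:vufull:i}, which you yourself invoke in the last column. It can also be checked by hand.
\begin{enumerate}
\item $\eta_\X$ is a vu-functor with $\eta_\X(a)\neq\eta_\X(b)$. So the walking arrow points from $\eta_\X(a)$ to $\eta_\X(b)$, and $[1]$ has no vu-arrows back, just as $\Hom_\sigma(b,a)=\emptyset$ in $\X$.
\item The homs of $[1]$ are singletons or empty, and the corresponding homs of $\X$ are nonempty (e.g.\ $\Hom_\sigma(a,b)=\pt(K)$). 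So every vu-arrow of $[1]$ between images lifts with no thickening at all.
\end{enumerate}
No reorientation or swapping of $a$ and $b$ helps. What the unit of this example destroys is faithfulness, not fullness. That is why the paper uses $\eta_\X$ only in the étale, embedding and localic rows.

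The counterexample has to go the other way, as in the paper. Let $F\colon[1]\to\X$ pick out one $\ptuf$-arrow $x\in\pt(K)$.
\begin{enumerate}
\item Restriction along $F$ sends a vu-sheaf on $\X$ (sets $A,B$ with a necessarily constant continuous map $K\to B^A$) to the corresponding function $A\to B$. So $\Sh(F)$ is an equivalence, hence subhyperconnected.
\item Take $y\neq x$ in $\Hom_\ptuf(a,b)$ and any $\phi\colon\tau\to\ptuf$. Compatibility of thickening with composition gives $\phi^*y=y\circ_\tau\delta$. In $\X$ this is the limit of the constant family, i.e.\ $y$ itself.
\item On the other hand, $F$ sends the singleton $\Hom^{[1]}_\tau(0,1)$ to $\phi^*x=x$.
\end{enumerate}
So no thickening of $y$ lies in the image, and $F$ is not vu-full.
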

\begin{proof}
    All the \checkmark can be deduced from Theorems \ref{thm:reconstruction} and \ref{thm:vufull}. 
    For the counter-examples, we consider $\X$ from \Cref{counterex}. 
    If $F : [1] \to \X$ is a vu-functor pointing on any $\ptuf$-arrow of $\X$, then $F$ is not vu-full, although $\Sh(F)$ is an equivalence of toposes and so in particular is subhyperconnected.
\end{proof}

\subsection{Localicness}

By orthogonality we can now show the remaining relations between localic geometric morphisms and faithful vu-functors.
\begin{Thm}\label{thm-loc-bis}\leavevmode
    \begin{enumerate}
        \item\label{thm:loc:i} Let $f$ be a geometric morphism between toposes with enough, if $\pt(f)$ is faithful then $f$ is localic.
        \item\label{thm:loc:iii} Let $\X$ be a bounded vu-category, if $F : \X \to \pt(\E)$ is faithful then its adjunct $f : \Sh(\X) \to \E$ is localic. 
    \end{enumerate}
\end{Thm}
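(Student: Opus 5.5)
The plan is to use the (hyperconnected, localic) factorisation of geometric morphisms together with the results already established for subhyperconnected maps and vu-full vu-functors. Factor $f = \ell\circ h$ with $h : \F\to\G$ hyperconnected and $\ell : \G\to\E$ localic. For both items, it suffices to show that $h$ is an equivalence. A hyperconnected morphism is in particular subhyperconnected (take the identity embedding) and surjective. So the strategy is to show that the hyperconnected part is also an embedding, and then conclude by \cite[A4.2.11]{johnstone:elephant} that $h$ is an equivalence.

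For \ref{thm:loc:i}, assume $\E$ and $\F$ have enough points, so $\G$ does too. I would first check that this holds: $\G$ is a quotient of $\F$ through the surjection $h$, so the points of $\F$ composed with $h$ are jointly surjective. Now $\pt(f) = \pt(\ell)\circ\pt(h)$ is faithful, so $\pt(h)$ is faithful. By \Cref{thm:vufull}\ref{thm:vufull:ii}, $\pt(h)$ is vu-full. Since $\pt(\F)$ and $\pt(\G)$ are taut (\Cref{prop:sober-taut}), \Cref{taut-ff} shows that $\pt(h)$ is fully faithful. By the first column of \Cref{table:emb} (the $\Leftarrow$ direction, valid since both toposes have enough points), $h$ is an embedding. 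Being also hyperconnected, hence surjective, $h$ is an equivalence, and therefore $f\simeq\ell$ is localic.

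For \ref{thm:loc:iii}, I would reduce to the first item as far as possible. The adjunct is $f = \ev^{-1}\circ\Sh(F)$, or more precisely $f$ corresponds to $\pt(f)\circ\eta_\X = F$. Factor $f = \ell\circ h$ as above, with $h : \Sh(\X)\to\G$ and $\G$ having enough points. By the adjunction, $F$ factors as $\pt(\ell)\circ H$, where $H = \pt(h)\circ\eta_\X : \X\to\pt(\G)$ is the adjunct of $h$. Since $F$ is faithful, $H$ is faithful. Moreover $H$ is vu-full: this is the last column of \Cref{table:shc}, because $h$ is subhyperconnected. Hence $H$ is faithful and vu-full.

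The main obstacle is that $\X$ need not be taut, so I cannot directly use \Cref{taut-ff}. Instead I would aim to show that $h$ is an embedding by a localic-slice argument. Since $h$ is hyperconnected, it suffices to show that $h$ is an equivalence, and for this I would use \Cref{table:emb}: the $\Leftarrow$ in the column $f\squig F$ says that $h$ is an embedding whenever $H$ is fully faithful. To get fully faithfulness, I would replace $\X$ by the image in $\pt(\Sh(\X))$. Since $\eta_\X$ is vu-full (\Cref{thm:vufull}\ref{thm:vufull:i}) and $H = \pt(h)\circ\eta_\X$ is faithful, $\eta_\X$ is faithful. Then $\pt(h)$ is vu-full and, restricted to the essential image of $\eta_\X$, faithful; I expect faithfulness to propagate to all of $\pt(\Sh(\X))$ using vu-subdensity/tautness. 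Failing that, I would argue as follows. The embedding criterion only needs $\pt(h)$ restricted to points in the image of $\eta_\X$. On the full sub-vu-category of $\pt(\Sh(\X))$ spanned by that image, which is taut as a full sub-vu-category of a taut one, $\pt(h)$ is faithful and vu-full, hence fully faithful by \Cref{taut-ff}. The adjunct of this full sub-vu-category's inclusion into $\pt(\G)$ is again $h$, since its vu-sheaves agree with $\Sh(\X)$ via \Cref{locsobercaract}-type reasoning. So \Cref{table:emb} makes $h$ an embedding, hence an equivalence, and $f$ is localic.
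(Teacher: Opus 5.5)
Your part (i) is the paper's proof: hyperconnected--localic factorisation $f=\ell\circ h$; $\pt(h)$ is faithful and, by \Cref{thm:vufull}, vu-full; it is then fully faithful by \Cref{taut-ff} and \Cref{prop:sober-taut}, so $h$ is an embedding by \Cref{table:emb}. Your check that the middle topos inherits enough points is left implicit in the paper.

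\textbf{Part (ii): the key step is asserted, not proved.} The paper reduces (ii) to (i) by claiming $\pt(\Sh(\X))\to\pt(\E)$ is faithful, because $F=\pt(f)\circ\eta_\X$ is faithful and $\eta_\X$ is vu-full. You instead factor $f$ first and aim at the $f\squig F$ column of \Cref{table:emb} on the full image $\Z$ of $\eta_\X$. Both routes rest on one step you state without argument: $\pt(h)$ is faithful on arrows between points of the form $\eta_\X(p)$. This is not formal, since $\eta_\X$ is only vu-full and such arrows need not come from $\X$. It does hold:
\begin{enumerate}
\item Take $g_1,g_2 : \eta_\X(p)\vuto{\sigma}\eta_\X(q_s)$ with $\pt(h)(g_1)=\pt(h)(g_2)$.
\item Apply vu-fullness to $g_1$, then to the thickened $g_2$. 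This gives one $\phi:\tau\to\sigma$ and $f_1,f_2 : p\vuto{\tau}q_{\phi t}$ with $\eta_\X(f_i)=\phi^*g_i$.
\item Then $H(f_1)=\phi^*\pt(h)(g_1)=\phi^*\pt(h)(g_2)=H(f_2)$, so $f_1=f_2$ by faithfulness of $H$.
\item Hence $\phi^*g_1=\phi^*g_2$, and $g_1=g_2$ because thickening is injective in $\pt(\Sh(\X))$ (\Cref{lemma:taut-sober}).
\end{enumerate}

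\textbf{Second unproved step.} The claim $\Sh(\Z)\simeq\Sh(\X)$, with $h$ as adjunct, is not what \Cref{locsobercaract} gives and needs its own argument. Restriction along $\X\to\Z$ is:
\begin{enumerate}
\item faithful, since it is surjective on objects;
\item full, since naturality with respect to $\phi^*g$ descends to $g$ by injectivity of $\phi^*$ on ultraproducts;
\item essentially surjective, via $\ev$.
\end{enumerate}
This equivalence is also what makes $\Z$ bounded, so that the table applies. The ``localic-slice argument'' you mention plays no role.

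\textbf{Points outside the image of $\eta_\X$.} Your worry here is justified, and it affects the paper's own proof. The paper asserts that $\eta_\X$ is ``bijective on objects'', which is false in general: for a non-sober space $\eta_\X$ is the sobrification, and the paper itself only proves vu-subdensity (\Cref{thm:surj}). The propagation you ``expect'' does work and is the cleanest repair:
\begin{enumerate}
\item Let $(i,(r_k))$ exhibit $P$ as a vu-retract of image points, and $(i'_s,(r'_{s,m}))$ do the same for each $Q_s$.
\item Then $g\circ\delta_P=(g\circ r_k)\circ i$ and $\delta_{Q_s}\circ g=r'_{s,m}\circ(i'_s\circ g)$, and both are thickenings of $g$.
\item So $g$ is determined by the composites $i'_s\circ g\circ r_k$ between image points, where faithfulness was shown above.
\item This gives faithfulness of $\pt(f)$ on all of $\pt(\Sh(\X))$, and (ii) then follows from (i).
\end{enumerate}

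\textbf{Comparison.} Once the two steps above are filled in, your $\Z$-route is a valid alternative. It avoids the forward reference to \Cref{thm:surj}, keeping \Cref{sec:vu-full} independent of \Cref{sec:surjectivity}. Its cost is the extra identification $\Sh(\Z)\simeq\Sh(\X)$.
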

\begin{proof}\leavevmode
    \begin{enumerate}
        \item We consider $f = \ell\circ h$ the hyperconnected--localic factorisation of $f$ and we show that $h$ is an embedding. The assumption that $\pt(f)$ is faithful implies that $\pt(h)$ is also faithful, but by \Cref{thm:vufull}\ref{thm:vufull:ii} $\pt(h)$ is also vu-full, hence $\pt(h)$ is fully faithful by \Cref{taut-ff} and \Cref{prop:sober-taut}, and so $h$ is an embedding by \Cref{table:emb}.
        \item By \ref{thm:loc:i} it is enough to show that $\pt(\Sh(\X))\to\pt(\E)$ is faithful, and that can be deduced from the composition
        \[\X\to\pt(\Sh(\X))\to\pt(\E)\]
        with $\X\to\pt(\E)$ being faithful and $\X\to\pt(\Sh(\X))$ being vu-full and bijective on objects. \qedhere
    \end{enumerate}
\end{proof}

And we fill the table (\ref{tableau}). 

\begin{cor}\label{table:loc}
    \

    \centerline{
    \begin{tabular}{r l||C C|C C|C C|}
    & & \multicolumn{2}{c|}{$f\squig\pt(f)$}&
        \multicolumn{2}{c|}{$\Sh(F)\squig F$}&
        \multicolumn{2}{c|}{$f\squig F$}\\
    \cline{3-8}
    geometric morphisms & vu-functors & $\Rightarrow$ & $\Leftarrow$ & $\Rightarrow$ & $\Leftarrow$ & $\Rightarrow$ & $\Leftarrow$ \\
    \cline{1-8}
    localic & faithful
        & \checkmark & \checkmark 
        & $\crossmark$ & ?
        & $\crossmark$ & \checkmark
    \end{tabular}
    }
\end{cor}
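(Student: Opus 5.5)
The statement is the localic/faithful line of the table. There are six entries to justify: four check marks and two crosses. The plan is to assemble them from results already proved, exactly as for \Cref{table:emb} and \Cref{table:shc}.

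For the column $f\squig\pt(f)$ with $f$ between toposes with enough points:
\begin{itemize}
\item $\Rightarrow$ is \Cref{thm-loc}: a localic $f$ gives a faithful $\pt(f)$.
\item $\Leftarrow$ is \Cref{thm-loc-bis}\pref{thm:loc:i}. That argument factors $f$ as hyperconnected followed by localic. The hyperconnected part $h$ has $\pt(h)$ vu-full by \Cref{thm:vufull} and faithful by cancellation. Since $\pt$ of a topos is taut (\Cref{prop:sober-taut}), \Cref{taut-ff} makes $\pt(h)$ fully faithful, and \Cref{table:emb} then makes $h$ an embedding. A hyperconnected embedding is an equivalence, so $f$ is localic.
\end{itemize}

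For the column $f\squig F$, with $f:\Sh(\X)\to\E$ adjunct to $F:\X\to\pt(\E)$:
\begin{itemize}
\item $\Leftarrow$ is exactly \Cref{thm-loc-bis}\pref{thm:loc:iii}.
\item For the column $\Sh(F)\squig F$, the entry $\Leftarrow$ is recorded as open (``?''), so nothing is to be proved there.
\end{itemize}

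For the two crosses, I would reuse the counterexample vu-category $\X$ of \Cref{counterex}, as in the previous table lines. Let $F=\eta_\X:\X\to[1]=\pt(\Set^{[1]})$.
\begin{itemize}
\item $F$ is not faithful, because $\X$ has the many parallel $\sigma$-arrows $\pt(K)$ and $K$ is non-trivial, while $[1]$ is a vu-poset.
\item Its adjunct $f:\Sh(\X)\to\Set^{[1]}$ is an equivalence of toposes, hence localic.
\end{itemize}
This refutes $\Rightarrow$ in the column $f\squig F$. The same $F$ has $\Sh(F)$ an equivalence, which also refutes $\Rightarrow$ in the column $\Sh(F)\squig F$.

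The only point needing care is checking that $\Sh(\eta_\X)$ is indeed an equivalence. This follows from pseudo-idempotence of the adjunction in \Cref{thm:reconstruction}, since $\Sh(\eta_\X)$ is inverse to the counit at $\Sh(\X)$ up to the triangle identity. Alternatively, it follows directly from the identification $\Sh(\X)\simeq\Set^{[1]}$ in \Cref{counterex}.
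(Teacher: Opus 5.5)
Your proposal is correct and follows the paper's proof. The check marks are assembled from \Cref{thm-loc} and \Cref{thm-loc-bis}, which rest on the reconstruction theorem. Both crosses are witnessed by the non-faithful unit $\eta_\X:\X\to[1]$ of \Cref{counterex}, whose associated geometric morphism $\Sh(\X)\to\Set^{[1]}$ is an equivalence. The only slip is cosmetic: the line has three check marks, two crosses and one open entry, not four check marks.
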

\begin{proof}
    All the \checkmark can be deduced from Theorems \ref{thm:reconstruction}, \ref{thm-loc}, and \ref{thm-loc-bis}. 
    For the counter-examples, we consider $\X$ from \Cref{counterex}, then the unit $F : \X\to\ [1]$ is not faithful, although $f : \Sh(\X)\to\Set^{[1]}$ is an equivalence of toposes and so in particular is localic.
\end{proof}

We can deduce the following characterisation, identifying vu-categories arising from a class of points of a topos and taut bounded vu-categories that have enough sheaves.
\begin{cor}\label{locsobercaract}
    Let $\X$ be a taut bounded vu-category. The following are equivalent:
    \begin{enumerate}
        \item \label{locsobercaract:i} the unit $\X\to\pt(\Sh(\X))$ is fully faithful;
        \item\label{locsobercaract:ii} $\X$ fully embeds into a vu-category of the form $\pt(\E)$;
        \item \label{locsobercaract:iii}$\X$ admits a faithful vu-functor into a vu-category of the form $\pt(\E)$;
        \item \label{locsobercaract:iv}$\X$ has enough sheaves, that is, $\X\to\pt(\Sh(\X))$ is faithful.
    \end{enumerate}
    These conditions are moreover equivalent to:
    \begin{enumerate}[start=5]
        \item \label{locsobercaract:v} $\X$ fully embeds into the vu-category of points of a presheaf topos;
        \item\label{locsobercaract:vi} $\X$ admits a faithful vu-functor into the vu-category of sets.
    \end{enumerate}
    
\end{cor}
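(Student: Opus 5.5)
We prove the cycle \ref{locsobercaract:i}$\Rightarrow$\ref{locsobercaract:ii}$\Rightarrow$\ref{locsobercaract:iii}$\Rightarrow$\ref{locsobercaract:iv}$\Rightarrow$\ref{locsobercaract:i}, and then attach \ref{locsobercaract:v} and \ref{locsobercaract:vi}.

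The first two implications are immediate: take $\E = \Sh(\X)$, and note that fully faithful implies faithful.

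For \ref{locsobercaract:iii}$\Rightarrow$\ref{locsobercaract:iv}, let $F : \X\to\pt(\E)$ be faithful. By the reconstruction theorem (\Cref{thm:reconstruction}), $F$ factors up to isomorphism as $\X\xrightarrow{\eta_\X}\pt(\Sh(\X))\to\pt(\E)$. If a composite is faithful, its first factor is faithful, so $\eta_\X$ is faithful. By \Cref{rmk:enough-sheaves-unit} this is exactly having enough sheaves.

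For \ref{locsobercaract:iv}$\Rightarrow$\ref{locsobercaract:i}:
\begin{itemize}
\item[] $\eta_\X$ is vu-full by \Cref{thm:vufull}\ref{thm:vufull:i};
\item[] $\X$ is taut by hypothesis, and $\pt(\Sh(\X))$ is taut by \Cref{prop:sober-taut};
\item[] so \Cref{taut-ff} turns vu-full and faithful into fully faithful.
\end{itemize}
This step is where the tautness hypothesis is actually used.

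For \ref{locsobercaract:v}, we need a presheaf topos in which $\Sh(\X)$ embeds. Every Grothendieck topos is a subtopos of a presheaf topos: choose a small generating full subcategory $\C$, giving an embedding $j:\Sh(\X)\hookrightarrow[\C^{\op},\Set]$. By \Cref{cor:emb-ff}, $\pt(j)$ is fully faithful. Composing with $\eta_\X$ gives \ref{locsobercaract:i}$\Rightarrow$\ref{locsobercaract:v}. The implication \ref{locsobercaract:v}$\Rightarrow$\ref{locsobercaract:ii} is trivial.

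For \ref{locsobercaract:vi}, the direction \ref{locsobercaract:vi}$\Rightarrow$\ref{locsobercaract:iii} is immediate, since $\Set = \pt(\SetO)$. For the converse, I expect the main obstacle to be producing a \emph{single} faithful vu-sheaf out of a separating family. Assume \ref{locsobercaract:iv}. Choose a small separating family of vu-sheaves $(A_i)_{i\in I}$, for instance the objects of a small dense subcategory of $\Sh(\X)$ (compare \Cref{prop:semi-flat-exist}); it separates because every vu-sheaf is a colimit of these. Then take $A \coloneqq \coprod_i A_i$. This is again a vu-sheaf, since coproducts are pointwise and commute with ultraproducts. Suppose $A_f = A_g$. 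Restricting to the summand $A_i\hookrightarrow A$, and using that ultraproducts of coproduct inclusions are injective, gives $(A_i)_f = (A_i)_g$ for every $i$. Hence $f=g$, so $A:\X\to\Set$ is faithful.
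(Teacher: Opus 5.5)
Your proof is correct. For \ref{locsobercaract:i}--\ref{locsobercaract:v} it is the paper's argument: the factorisation through the unit gives \ref{locsobercaract:iii}$\Rightarrow$\ref{locsobercaract:iv}. \Cref{thm:vufull}\ref{thm:vufull:i}, \Cref{prop:sober-taut} and \Cref{taut-ff} give \ref{locsobercaract:iv}$\Rightarrow$\ref{locsobercaract:i}. An embedding into a presheaf topos together with \Cref{cor:emb-ff} gives \ref{locsobercaract:v}.

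For \ref{locsobercaract:vi} you take a different route. The paper uses the fact that every topos is localic over $\SetO$, so \Cref{thm-loc} makes $\pt(\Sh(\X))\to\pt(\SetO)=\Set$ faithful. Composing with the faithful unit then yields the faithful vu-sheaf $\ev(U)$ for any prebound $U$. You instead build a single sheaf by hand, as a coproduct of a small separating family, and check faithfulness directly: the summand inclusions are pointwise injective, and ultraproducts preserve injections.

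Your $A$, being a coproduct of generators, is itself a prebound, so both arguments produce the same kind of object. Yours is more elementary, since it bypasses the subquotient argument of \Cref{lem_localic}. The paper's explains conceptually why a single faithful sheaf exists, and shows that any prebound $U$ works, not only a coproduct of generators.

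Two of your justifications need repair, though neither affects the conclusion.
\begin{enumerate}
\item Infinite coproducts do \emph{not} commute with ultraproducts: for non-principal $\sigma$ the canonical map $\mathbb{N}=\coprod_{n\in\mathbb{N}}\int_{\sigma}1\to\int_{\sigma}\mathbb{N}$ is not surjective. You do not need this fact. $\coprod_i A_i$ is simply the coproduct in the topos $\Sh(\X)$, computed pointwise. Its structure maps are induced summand-wise by $(A_i)_p\to\int_{\sigma}(A_i)_{q_s}\to\int_{\sigma}\coprod_i (A_i)_{q_s}$.
\item The step ``every vu-sheaf is a colimit of these, hence they separate vu-arrows'' silently uses that colimits in $\Sh(\X)$ are computed pointwise. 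This holds because evaluation at $p$ is the inverse image of the point $\eta_\X(p)$, so every element of $B_p$ comes from some $(A_i)_p$. Alternatively, once \ref{locsobercaract:i} holds, you can simply invoke \Cref{prop:semi-flat-exist} with $\E=\Sh(\X)$.
\end{enumerate}
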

\begin{proof}
    The implications \ref{locsobercaract:i}$\Rightarrow$\ref{locsobercaract:ii} and \ref{locsobercaract:ii}$\Rightarrow$\ref{locsobercaract:iii} are trivial. 
    For \ref{locsobercaract:iii}$\Rightarrow$\ref{locsobercaract:iv} we factorise $\X\to\pt(\E)$ by the unit
    \[\X\to\pt(\Sh(\X))\to\pt(\E),\]
    so that $\X\to\pt(\E)$ faithful implies $\X\to\pt(\Sh(\X))$ faithful.
    And \ref{locsobercaract:iv}$\Rightarrow$\ref{locsobercaract:i} follows easily from \Cref{thm:vufull}\ref{thm:vufull:i}, \Cref{taut-ff}, and \Cref{prop:sober-taut}.

    Finally, \ref{locsobercaract:v} and \ref{locsobercaract:vi} follow, respectively, from the fact that any topos embeds into a presheaf topos, and that any topos is localic over $\SetO$. 
\end{proof}

\section{Surjectivity}\label{sec:surjectivity}

This section is dedicated to the vu-categorical counterpart of geometric surjections.
We begin with the easy case of when a set of points of a topological space is separating (\Cref{prop:subcl-sep-topological-case}). We then categorify this result, introducing the crucial notion of \emph{vu-retractions} (\Cref{def:vu-retract}). Finally, we define \emph{vu-subdense} vu-functors as a vu-counterpart to geometric surjections, and, using techniques similar to those in \Cref{sec:vu-full}, we show how they relate to each other.



\subsection{Subclosure in vu-posets}\label{sec:subcl}

\begin{defn}\label{def:separating-in-vu-pos}
    A collection $A$ of points of a vu-poset (or a topological space) $X$ is said \emph{separating} if any two opens $U \subseteq V$ that agree on $A$ are equal.
\end{defn}

If one takes a logical point of view, in which the points of $X$ are would-be models of some propositional theory and the opens are would-be formulas of that theory, then a separating collection of models is one that can decide entailment of formulas.
Perhaps surprisingly, since the notion of separation deals directly with the duality between points and opens, it is possible to characterise separation entirely in terms of points. A version of this story, for topological spaces, already appears in \cite{johnstone-1980:open-maps}, whence the next notion originates.

In \cite{johnstone-1980:open-maps}, Johnstone gives a full answer to question of when a class of points of a topological space is separating: he defines the \emph{subclosure} of $A\subseteq X$ as the set of points $p\in X$ such that every locally closed set (intersection of a closed set and an open) containing $p$ intersects $A$, and then shows that $A$ is separating if and only if its subclosure is all of $X$.
%

We want to extend this notion of subclosure to the setting of vu-posets and then vu-categories, for that, we first give an alternative description of the subclosure in terms of ultraconvergence.

\begin{prop}\label{prop:subclosure-topology}
    Let $A$ be a collection of points of a vu-poset (or a topological space) $X$. For $p$ a point of $X$, the following are equivalent:
    \begin{enumerate}
        \item every locally closed subspace (intersection of an open and a closed subspace, see \ref{def:open}) containing $p$ meets $A$; \label{cond:subclosure-johnstone}
        \item $p$ belongs to $\cl(A \cap \cl\{p\})$; \label{cond:subclosure-closure}
        \item there exists an ultrafamily $(a_s)_{s:\sigma}$ of points in $A$ and a $\sigma$-family of ultrafilters $(\tau_s)_{s:\sigma}$ such that $p \vuleq{\sigma} a_s$ and $a_s \vuleq{\tau_s} p$ for all $s:\sigma$. If moreover $X$ is taut, we can assume all the $\tau_s$ are trivial. \label{cond:subclosure-vu-arrow}
    \end{enumerate}
\end{prop}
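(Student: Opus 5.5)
I will prove the cycle (i)$\Leftrightarrow$(ii)$\Leftrightarrow$(iii), using only \Cref{lem:smallest-closedset}, \Cref{lem:closure} and the definition of $\cl$ in \Cref{def:open}. For a topological space, all notions are computed in the associated vu-poset via \Cref{prop:opens-are-upsets}, so it suffices to treat vu-posets.

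\emph{(i)$\Leftrightarrow$(ii).} By \Cref{lem:smallest-closedset}, $\cl\{p\}$ is the smallest closed subposet containing $p$. Hence a locally closed set $U\cap F$ with $U$ open, $F$ closed, contains $p$ exactly when $p\in U$ and $\cl\{p\}\subseteq F$. So (i) says: every open $U$ containing $p$ meets $A\cap F$ for every closed $F\ni p$. Monotonicity in $F$ reduces this to the smallest such $F$, namely $F=\cl\{p\}$, and (i) becomes ``every open containing $p$ meets $A\cap\cl\{p\}$''. By \Cref{lem:closure}, this is precisely $p\in\cl(A\cap\cl\{p\})$.

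\emph{(ii)$\Leftrightarrow$(iii).} Unfolding the definition of closure, $p\in\cl(A\cap\cl\{p\})$ means there is $(a_s)_{s:\sigma}$ with $a_s\in A\cap\cl\{p\}$ and $p\vuleq{\sigma}a_s$. The condition $a_s\in\cl\{p\}$ says some ultrafamily of copies of $p$ converges to $a_s$, i.e.\ $a_s\vuleq{\tau_s}p$ with $p$ repeated along a family indexed by $\tau_s$. One must be careful here that $\cl$ is indeed computed by a single vu-inequality; this is the definition given in \Cref{def:open}, and \Cref{lem:smallest-closedset} guarantees it is closed. By the axiom of choice we choose such $\tau_s$ for each $s$, obtaining (iii). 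Conversely, (iii) gives $a_s\in A\cap\cl\{p\}$ for all $s:\sigma$. Since $\cl$ only requires $\sigma$-almost all terms, we may replace the $\sigma$-small set of bad indices by any member of the family, or restrict $\sigma$ to a large set; this yields (ii).

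\emph{Taut refinement.} Given $a_s\vuleq{\tau_s}p$, the family is the constant family $p$, which is the thickening of the $\ptuf$-family $p$ along $!:\tau_s\to\ptuf$. \Cref{def:taut-poset} then unthickens it to $a_s\vuleq{\ptuf}p$. The only delicate point in the whole argument is this reformulation of a constant family as a thickening, and it is routine.
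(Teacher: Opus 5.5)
Your proof is correct and follows the paper's argument: (i)$\Leftrightarrow$(ii) uses the minimality of $\cl\{p\}$ among closed sets containing $p$ together with the open-set characterisation of closure, and (ii)$\Leftrightarrow$(iii) unfolds the definition of $\cl$. You also spell out the taut refinement, which the paper leaves implicit: you view the constant $\tau_s$-family at $p$ as a thickening along $\tau_s\to\ptuf$ and then unthicken it.
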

\begin{proof}\leavevmode
    \begin{enumerate}
        \item[$\pref{cond:subclosure-johnstone}\Leftrightarrow\pref{cond:subclosure-closure}$] By \Cref{lem:smallest-closedset}, $\cl\{p\}$ is the smallest closed subspace containing $p$, so \pref{cond:subclosure-johnstone} is equivalent to
        \[\text{for all open } U\text{ containing }p,\ U\cap\cl\{p\}\text{ intersects } A,\]
        which by \ref{lem:closure} means $p\in\cl(A\cap\cl\{p\})$.
        \item[$\pref{cond:subclosure-closure}\Leftrightarrow\pref{cond:subclosure-vu-arrow}$]
        Follows directly from \Cref{def:open}. \qedhere
    \end{enumerate}
\end{proof}

\begin{defn}\label{def:poset-subcl}
    If the above condition is satisfied we say that \emph{$p$ is in the subclosure of $A$}, and we denote by $\subcl(A)\subseteq X$ the collection of $p$ that are in the subclosure of $A$.
    Concretely, $p\in\subcl(A)$ if there exists a $\sigma$-family $(a_s)_{s:\sigma}$ of points in $A$ and a $\sigma$-family of ultrafilters $(\tau_s)_{s:\sigma}$ such that $p \vuleq{\sigma} a_s$ and $a_s \vuleq{\tau_s} p$ for $s:\sigma$.
\end{defn}

\begin{rmk}
    The condition \pref{cond:subclosure-johnstone} shows that if $X$ is the vu-poset of points of a topological space we recover the definition of Johnstone \cite[before Lem.~2.1]{johnstone-1980:open-maps}. The condition \pref{cond:subclosure-vu-arrow} is an equivalent formulation from the vu-posetal point of view, it is the one we will use to categorify the notion of subclosure. 
\end{rmk}

We can now show an analogue of \cite[Lem.~2.1]{johnstone-1980:open-maps}. 

\begin{prop}\label{prop:subcl-sep-topological-case}
    A collection of points $A$ of a vu-poset $X$ is separating if and only if its subclosure is $X$.
\end{prop}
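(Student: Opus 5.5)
We prove the two directions separately, using the characterisation of the subclosure given in \Cref{prop:subclosure-topology}\pref{cond:subclosure-johnstone}: a point $p$ lies in $\subcl(A)$ exactly when every locally closed subset $U\cap F$ (with $U$ open and $F$ closed) containing $p$ meets $A$. The only facts about opens and closeds we need are elementary. By \Cref{def:open}, the complement of an open is closed and vice versa. Opens are closed under arbitrary unions and finite intersections, and closed sets under finite unions, since ultrafilters split finite unions.

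First suppose $\subcl(A)=X$, and let $U\subseteq V$ be opens with $U\cap A=V\cap A$. If some $p$ lies in $V\setminus U$, consider $V\cap(X\setminus U)$. It is locally closed, being the intersection of the open $V$ with the closed set $X\setminus U$, and it contains $p$. Since $p\in\subcl(A)$, this set contains some $a\in A$. But then $a\in V\cap A=U\cap A$, which contradicts $a\notin U$. Hence $U=V$, and $A$ is separating.

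Conversely, suppose $A$ is separating and take $p\in X$. Let $U$ be an open and $F$ a closed set with $p\in U\cap F$; we must show $U\cap F$ meets $A$. Set $V\coloneqq U$ and $W\coloneqq U\cap(X\setminus F)$. Then $W$ is open, as a finite intersection of opens, and $W\subseteq V$. Moreover $p\in V\setminus W$, so $W\neq V$. Because $A$ is separating, $V$ and $W$ must differ on $A$. Since $W\subseteq V$, this means some $a\in A$ lies in $V\setminus W=U\cap F$, as required. Therefore $p\in\subcl(A)$.

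The only step that needs checking is that the complement $X\setminus F$ of a closed set is open, so that $W$ is a legitimate open. This follows directly from the upset/downset definitions in \Cref{def:open}: if $q\notin F$ and $q\vuleq{\sigma}r_s$, then it cannot be that $r_s\in F$ for $\sigma$-all $s$, since $F$ is downward-closed. Hence $r_s\notin F$ for $\sigma$-all $s$, because $\sigma$ is an ultrafilter. So the argument is essentially formal once we use condition \pref{cond:subclosure-johnstone}, and I expect no real obstacle. Tautness is not needed.
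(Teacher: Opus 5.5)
Your proof is correct. It runs both directions through the locally closed characterisation \pref{cond:subclosure-johnstone} of \Cref{prop:subclosure-topology}, which is essentially Johnstone's original argument. The paper instead mixes characterisations.

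For the converse, the paper uses \pref{cond:subclosure-closure} together with \Cref{lem:closure}. It separates $U$ from the open $U\setminus\cl\{p\}$, which is exactly your argument specialised to the smallest closed set $F=\cl\{p\}$ containing $p$.

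For the forward direction, the paper works with the vu-inequalities of \pref{cond:subclosure-vu-arrow} rather than with the locally closed set $V\cap(X\setminus U)$. From $p\in U$ and $p\vuleq{\sigma}a_s$ it gets $a_s\in U\cap A\subseteq V$ for $\sigma$-all $s$. Then $a_{s_0}\vuleq{\tau_{s_0}}p$ for a suitable $s_0$ forces $p\in V$.

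Your version is slightly more self-contained: it only needs the complement facts, which you verify. The paper's forward argument, however, is the one that categorifies directly. It is precisely the pattern of \Cref{vu-retract-direction-facile}, where a vu-retraction $(i,(r_s))$ replaces the pair of vu-inequalities. The locally closed formulation has no evident counterpart in that setting.
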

\begin{proof}
    We suppose that $\subcl(A) = X$ and we show that $A$ is separating. Let $U$ and $V$ be two opens which agree on $A$, and pick $p \in U$, we want to show $p\in V$.
    By assumption, there exists $(a_s)_{s:\sigma}$ in $A$ such that $p \vuleq{\sigma} a_s$ and $a_s \vuleq{\tau_s} p$.
    From $p \vuleq{\sigma} a_s$ and $p \in U$, it follows that for $s:\sigma$, we have $a_s \in U$ and so $a_s \in V$.
    In particular there exists at least one $s_0$ such that $a_{s_0} \in V$ and $a_{s_0} \vuleq{\tau_{s_0}} p$, which guarantees $p \in V$. This shows $U \subseteq V$, and by symmetry the reverse also holds.

    Conversely, we suppose that $A$ is separating in $X$. By \Cref{lem:closure}, it is enough to show that any open $U$ containing $p$ intersects $A\cap \cl\{p\}$.
    As $U \setminus \cl\{p\}$ is an open strictly contained in $U$, it must be separated from $U$ by some $a\in A$. In other words, there exists $a \in A$ such that $a \in U$ and $a\in\cl\{p\}$, hence $U$ intersects $A\cap \cl\{p\}$ as wanted. 
\end{proof}



\subsection{Subclosure categorified}\label{sec:subcl-cat}

We now categorify the notion of separating set of points and of subclosure. The end-goal is to prove \Cref{prop:subcl-sep-categorified} that is a categorification of \Cref{prop:subcl-sep-topological-case}.

The following definition extends \Cref{def:separating-in-vu-pos} to vu-categories.

\begin{defn}\label{def:separating-in-vu-cat}
    A collection of points $\dsA$ of a vu-category $\X$ is \emph{separating} if every map of sheaves $f: A\to B$ such that all $(f_a : A_a\to B_a)_{a:\dsA}$ are isomorphisms is itself an isomorphism, \ie if the evaluation functors $(\ev_a : \Sh(\X)\to\Set)_{a:\dsA}$ are jointly conservative.
\end{defn}

Again, we would like to give a sheaf-less characterisation of separating families of points.
The following particular case can be found in the literature.

\begin{prop} \label{prop:presheaf-separating-retract}
    Let $\C$ be a small category seen as a vu-category (see \cite[4.2(vi)]{Saa}),
    then $\dsA\subseteq\Ob(\C)$ is separating if and only if every object of $\C$ is a retract of an object of $\dsA$.
\end{prop}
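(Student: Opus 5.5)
The plan is to reduce to the classical statement about presheaf toposes. By \Cref{thm:reconstruction} and \cite[4.2(vi)]{Saa}, the vu-sheaves on $\C$, seen as a vu-category, are the functors $\C\to\Set$, so $\Sh(\C)\simeq[\C,\Set]$. Under this equivalence the evaluation functors $\ev_a$ are ordinary evaluation at $a$. So $\dsA$ is separating exactly when evaluations at objects of $\dsA$ are jointly conservative on $[\C,\Set]$. We then prove the two directions directly.

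For the easy direction, assume every $c\in\C$ is a retract of some $a\in\dsA$, say with $i:c\to a$ and $r:a\to c$ and $ri=\id_c$. Take $f:A\to B$ in $[\C,\Set]$ with every $f_a$ invertible. Then $f_c$ is a retract of $f_a$ in the arrow category of $\Set$, via the naturality squares for $i$ and $r$. A retract of an isomorphism is an isomorphism, so $f_c$ is invertible, and hence $f$ is.

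For the converse, assume $\dsA$ is separating and fix $c\in\C$. Consider the representable $y_c=\C(c,-)$ and its subfunctor $S\subseteq y_c$ consisting of those $g:c\to d$ that factor through some object of $\dsA$. This is a subfunctor because such morphisms are closed under postcomposition. For $a\in\dsA$, every $g:c\to a$ factors trivially through $a$, so $S_a=\C(c,a)$. Thus the mono $S\hookrightarrow y_c$ is bijective on all $a\in\dsA$, and separation makes it an isomorphism. In particular $\id_c\in S_c$, so $\id_c=r\circ i$ for some $i:c\to a$ and $r:a\to c$ with $a\in\dsA$, i.e.\ $c$ is a retract of $a$.

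The main obstacle is the first step: justifying that vu-sheaves on $\C$ are exactly copresheaves and that the $\ev_a$ match ordinary evaluation. I would take this from \cite[4.2(vi)]{Saa}, where a category is viewed as a vu-category whose vu-arrows are $\sigma$-families of arrows, i.e.\ arrows into a principal family. A vu-functor $\C\to\Set$ is then determined by its underlying functor, because $\Set$'s vu-arrows into principal ultraproducts are ordinary functions. The rest is the routine argument above.
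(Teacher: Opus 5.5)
Your proof is correct. It follows the paper's route: identify $\Sh(\C)$ with $[\C,\Set]$, so that being separating becomes joint conservativity of the evaluations at objects of $\dsA$. Where the paper simply cites \cite[A4.2.7(b)]{johnstone:elephant} (via separating sets of points of the presheaf topos), you prove that classical fact directly. You do so with the retract-of-an-isomorphism argument and the subfunctor of $\C(c,-)$ of maps factoring through $\dsA$, which makes the argument self-contained.

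The only thing to tidy is your justification of the identification. The vu-arrows $c\vuto{\sigma}d_s$ of $\C$ are $\sigma$-families of arrows, i.e.\ elements of $\int_{s:\sigma}\C(c,d_s)$, not ``arrows into a principal family''. A vu-functor $\C\to\Set$ is determined by its underlying functor because each such $(f_s)_{s:\sigma}$ is the composite of the diagonal $\delta : c\vuto{\sigma}c$ with the $\ptuf$-arrows $f_s$, and vu-functors preserve both.
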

\begin{proof}
    The category of vu-sheaves above $\C$ is precisely the presheaf topos $\Set^{\C}$.
    Hence, this follows from \cite[A4.2.7(b)]{johnstone:elephant},
    by noticing that a set of objects of $\C$ separates $\Sh(\C)$ if and only if it induces a separating set of points of the presheaf topos $\Set^{\C}$.
\end{proof}

The above condition \ref{prop:subclosure-topology} \pref{cond:subclosure-vu-arrow} for vu-posets strongly suggests the retract pattern of the above categorical case \ref{prop:presheaf-separating-retract}. One would like to interpret \Cref{prop:subclosure-topology} \pref{cond:subclosure-vu-arrow} as saying that the point $p$ is a ‘vu-retract’ of the family $(a_s)_{s:\sigma}$.

\begin{defn}\label{def:vu-retract}
    Let $\X$ be any vu-category. A point $p$ is a \emph{vu-retract} of $(a_s)_{s:\sigma}$ if there are vu-arrows $i : p \vuto{\sigma} a_s$ and $(r_s : a_s\vuto{\tau_s} p)_{s:\sigma}$ such that $r_s\circ_{\sigma} i : p \vuto{{\sigma}\cdot\tau_s} p$ is equal to the diagonal $\delta : p\vuto{{\sigma}\cdot\tau_s}p$. The pair $(i,(r_s)_{s:\sigma})$ will be called a \emph{vu-retraction}.
\end{defn}


\begin{ex}\leavevmode\label{ex:vu-retract}
    We can recover our two motivating examples.
    \begin{enumerate}
        \item In a topological space $T$, a point $p$ is a vu-retract of points in $A \subseteq X$ precisely if it belongs to $\subcl(A)$ (see \Cref{def:poset-subcl}).
        \item In a 1-category $\C$ seen as a vu-category, if $p$ is a retract of $a$ in the usual sense, then $p$ is a vu-retract of the constant family $(a)_*$.
    \end{enumerate}
    Here are some more interesting, and maybe unexpected, examples.
    \begin{enumerate}[start=3]
        \item In an ultracategory seen as a vu-category (see \cite[4.2.(vii)]{Saa}), any object $A$ is an ultraretract of its ultrapower $(A^{\sigma})_{\ptuf}$, with $i : A \vuto{\ptuf} A^{\sigma}$ given by the diagonal $A\to A^{\sigma}$ and $r : A^{\sigma}\vuto{\sigma}A$ given by the identity $A^{\sigma} \to A^{\sigma}$. Note that this example shows the need for the $(\tau_s)$ in the definition of vu-retraction. 
        
        \item Perhaps surprisingly, the notion of vu-retract is powerful enough to see filtered colimits. Let $(a_i)_{i:I}$ be a filtered diagram in some vu-category $\X$, with colimit $p$ in the vu-categorical sense (\ie the universal property holds for vu-arrows).
        Then, for any final ultrafilter $\sigma$ on $I$ (see \Cref{def:final-uf}), $p$ is a vu-retract of $(a_i)_{i:\sigma}$,
        the $(r_s : a_s\vuto{\ptuf} p)_{s:\sigma}$ come from the cocone defining $p$, and the vu-arrow $i : p \vuto{\sigma} a_s$ is given by the universal property of $p$ (consider the composite of the diagonal of $a_i\vuto{\sigma}a_i$ with the $(a_i \vuto{\ptuf} a_s)_{s:\sigma}$).
    \end{enumerate}
\end{ex}

\begin{rmk}
    We believe that, assuming $\X$ taut, one can fully characterize such filtered colimits (in the above vu-categorical sense) by means of vu-retracts. This would imply that in the context of taut vu-categories, filtered colimits are \emph{absolute}, thereby explaining why vu-functors between vu-categories of points of toposes automatically preserve filtered colimits. Note that a particular case of this fact has already been observed in \cite[5.3.4]{Lurie}.
\end{rmk}

\begin{rmk}
    It is worth noting that a characterisation of soberness by nets for topological spaces has been given in \cite{Nets}. The author uses nets instead of filters, but we can make the following analogies: an ‘observative net’ corresponds to some yet-to-be-defined notion of ‘vu-idempotent’, and ‘strongly convergence’ corresponds to some ‘splitting’ of it. Moreover, the ‘b-topology’ is the topology generated by the locally closed sets, connecting the ‘b-closure’ to the subclosure.
\end{rmk}

\begin{defn}\label{def:cat-subcl}
    The \emph{subclosure} of a collection $\dsA$ of objects of a vu-category $\X$ is the full sub-vu-category consisting of the points which are vu-retracts of points in $\dsA$. We will denote it by $\subcl_\X(\dsA)\subseteq\X$, or just $\subcl(\dsA)$ if clear from context.
\end{defn}

\begin{rmk}\label{rmk:subclosure-Tref}
    The above notion is a categorification of \Cref{def:poset-subcl} in the sense that for vu-posets we recover the already defined notion. 
    Not only is it true that $\subcl_X(A) = \subcl_{\Tinj(X)}(A)$, as we have noted, but also $\subcl_{\X}(\dsA) \subseteq \subcl_{\Tref{\X}}(\dsA)$, although the inclusion is strict in general as we ask for an additional coherence.
\end{rmk}


We can now state the categorified analogue of \Cref{prop:subcl-sep-topological-case}: a collection of points of a vu-category is separating if and only if it generates it by vu-retracts.
We start with the more straightforward direction.
\begin{prop}\label{vu-retract-direction-facile}
    If all points of $\X$ are vu-retracts of points in $\dsA$ then $\dsA$ is separating.
\end{prop}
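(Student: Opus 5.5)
To prove \Cref{vu-retract-direction-facile}, take a map of vu-sheaves $f : A\to B$ such that $f_a : A_a\to B_a$ is bijective for every $a\in\dsA$, and show that $f_p$ is bijective for an arbitrary point $p$ of $\X$. The only structure available is the vu-retraction $(i,(r_s)_{s:\sigma})$ with $i : p\vuto{\sigma}a_s$, $a_s\in\dsA$, and $r_s : a_s\vuto{\tau_s}p$. Two facts will be used repeatedly. First, functoriality of $A$ and $B$ together with $r_s\circ_\sigma i=\delta$ gives
\[(r_s)_*\circ i_* = \delta_* : A_p\to \textstyle\int_{\sigma\cdot\tau_s}A_p,\]
and $\delta_*$ is the diagonal map, which is injective (the $\phi^*$ of \Cref{lemma:taut-sober} along $\sigma\cdot\tau_s\to\ptuf$). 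Second, naturality of $f$ along any vu-arrow $g : x\vuto{\kappa}y_k$ reads $g_*\circ f_x = (\int_\kappa f_{y_k})\circ g_*$. The argument is then the classical one that retracts of isomorphisms are isomorphisms, carried out through ultraproducts.

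Injectivity. Suppose $\xi,\xi'\in A_p$ satisfy $f_p\xi=f_p\xi'$. Naturality along $i$ gives $\int_\sigma f_{a_s}(i_*\xi)=\int_\sigma f_{a_s}(i_*\xi')$. Since each $f_{a_s}$ is a bijection, $\int_\sigma f_{a_s}$ is a bijection, so $i_*\xi=i_*\xi'$. Applying $(r_s)_*$ yields $\delta_*\xi=\delta_*\xi'$, and injectivity of the diagonal gives $\xi=\xi'$.

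Surjectivity. Fix $\zeta\in B_p$. Push it forward to $i_*\zeta=(\zeta_s)_{s:\sigma}\in\int_\sigma B_{a_s}$ and pull back along the bijections to obtain $(\xi_s)_{s:\sigma}$ with $f_{a_s}\xi_s=\zeta_s$. Then form $\eta := ((r_s)_*\xi_s)_{s:\sigma}\in\int_{\sigma\cdot\tau_s}A_p$. By naturality along the $r_s$,
\[\textstyle\int f_p(\eta)=((r_s)_*\zeta_s)_s=\delta_*\zeta.\]
So $\eta$ lies in the preimage under $\int_{\sigma\cdot\tau_s}f_p$ of a diagonal element. The remaining task is to show that $\eta$ is itself diagonal, $\eta=\delta_*\xi$, since then injectivity of the diagonal in $B$ gives $f_p\xi=\zeta$.

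This last step is the main obstacle, and I would handle it with an ultraproduct argument rather than categorically. Suppose $\zeta\notin\operatorname{im} f_p$. Then for $(\sigma\cdot\tau_s)$-all $(s,t)$ the component $\eta_{s,t}\in A_p$ satisfies $f_p(\eta_{s,t})=\zeta$, because equality in the ultraproduct holds on a large set. That puts $\zeta$ in the image of $f_p$, a contradiction. Hence $\zeta\in\operatorname{im} f_p$, which proves surjectivity directly, and $\eta$ is in fact diagonal up to the injectivity already shown. The argument uses only that equality in an ultraproduct is witnessed on a large, hence nonempty, set, which is exactly the computation behind the posetal case \Cref{prop:subcl-sep-topological-case}. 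So $f_p$ is bijective for all $p$, $f$ is an isomorphism, and $\dsA$ is separating.
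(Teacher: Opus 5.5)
Your proof is correct, but it takes a different route from the paper.

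The paper first reduces conservativity of $(\ev_a)_{a:\dsA}$ to faithfulness. It does this using that $\Sh(\X)$ is a pretopos, hence balanced, so that a faithful functor out of it reflects isomorphisms. It then needs only one diagram chase: for parallel $f,g : A\rightrightarrows B$ agreeing on $\dsA$, one gets $\delta\circ f_p = ({r_s}_*)\circ(f_{a_s})\circ i_* = ({r_s}_*)\circ(g_{a_s})\circ i_* = \delta\circ g_p$, and injectivity of the diagonal gives $f_p=g_p$.

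You instead keep a single map $f$ and transfer bijectivity along the vu-retraction directly. Your injectivity step is essentially the paper's chase applied to elements. Your surjectivity step has no counterpart in the paper, because balancedness makes it unnecessary there. That step lifts $i_*\zeta$ along the $f_{a_s}$, pushes forward along the $r_s$, and uses that the large set of $(s,t)$ with $f_p(\eta_{s,t})=\zeta$ is non-empty.

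Your route is more elementary and does not rely on the pretopos structure of $\Sh(\X)$. It also shows slightly more: injectivity and surjectivity of the $f_{a_s}$ are each transferred to $f_p$ separately. The paper's route is shorter.

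Two cosmetic points:
\begin{enumerate}
\item The ``suppose $\zeta\notin\operatorname{im} f_p$'' framing is superfluous, since you directly exhibit $\zeta=f_p(\eta_{s,t})$ for some $(s,t)$.
\item Passing from pointwise bijectivity to $f$ being an isomorphism of vu-sheaves uses the routine fact that the pointwise inverses are again vu-natural. This holds because an ultraproduct of inverses is the inverse of the ultraproduct.
\end{enumerate}
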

\begin{proof}
    The category $\Sh(\X)$ is a pretopos, so to show that $(\ev_a)_{a:\dsA}:\Sh(\X)\to\Set^{\dsA}$ is conservative it is enough to show that it is faithful.
    Let $f$ and $g$ be two parallel maps $A \rightrightarrows B$ in $\Sh(\X)$ which agree on all points $a\in \dsA$, we need to show that $f = g$. For $p \in \X$, we choose a vu-retraction $i : p \vuto{\sigma} a_s$ and $(r_s : a_s \vuto{\tau_s} p)_{s:\sigma}$ with the $(a_s)_{s:\sigma}$ in $\dsA$ and we consider the following diagram of sets.
\[\begin{tikzcd}
	{A_p} & {B_p} & \\
	{\int_{\sigma}A_{a_s}} & {\int_{\sigma}B_{a_s}} & {\int_{\sigma}\int_{\tau_s}B_p}
	\arrow["{f_p}"{description}, curve={height=-6pt}, dashed, from=1-1, to=1-2]
	\arrow["{g_p}"{description}, curve={height=6pt}, dashed, from=1-1, to=1-2]
	\arrow["{i_*}"', from=1-1, to=2-1]
	\arrow["{i_*}"', from=1-2, to=2-2]
	\arrow["\delta", hook, from=1-2, to=2-3]
	\arrow[""{name=0, anchor=center, inner sep=0}, "{(f_{a_s})}", curve={height=-6pt}, from=2-1, to=2-2]
	\arrow[""{name=1, anchor=center, inner sep=0}, "{(g_{a_s})}"', curve={height=6pt}, from=2-1, to=2-2]
	\arrow["{({r_s}_*)}"', from=2-2, to=2-3]
	\arrow[between={0.2}{0.8}, equals, from=0, to=1]
\end{tikzcd}\]
    The right triangle commutes by definition of a vu-retraction and the left square commutes by naturality. Since the diagonal $\delta$ is an injection, it follows that $f_p = g_p$, as desired.
\end{proof}

The converse direction is harder, and the proof will rely crucially on merging along a semi-flat diagram. The core ingredient is \Cref{prop:ultraretract-crux} which shows that being a vu-retract of a collection of points can be checked on the topological étalements above the sheaves in such a diagram.

\begin{nota}
    Let $\dsA$ be a collection of points of a vu-category $\X$.
    Given a vu-sheaf $Y \in\Sh(\X)$ we denote its total space by $\Y\to\X$, and we denote by $\dsA_Y$ the collection of points of $\Y$ that lie above a point in $\dsA$, \ie 
    $\dsA_Y$ consists of the pairs $(a, y)$ with $a \in \dsA$ and $y \in Y(a)$.
\end{nota}

\begin{lem}\label{prop:separating-slice}
     If $\dsA$ is a separating collection of points of $\X$, then $\dsA_Y$ is a separating collection of points in $\Y$.
\end{lem}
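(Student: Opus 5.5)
The plan is to identify the evaluation functors on $\Sh(\Y)$ with evaluations on $\Sh(\X)/Y$, and then reduce conservativity on the slice to conservativity on $\Sh(\X)$.

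\textbf{Step 1: identify $\Sh(\Y)$ with the slice.} By \Cref{thm:etale}\ref{prop:slice-etale} there is an equivalence $\Sh(\Y)\simeq\Sh(\X)/Y$. Under it, a vu-sheaf $B$ on $\Y$ corresponds to $\Sigma B \to Y$, where
\[(\Sigma B)_p=\textstyle\bigsqcup_{y\in Y_p}B_{(p,y)}.\]
Concretely, this is the total space of $B$ composed with $\Y\to\X$, which is étale by \Cref{lem:slice-etale}.

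\textbf{Step 2: compare evaluations.} For a point $(a,y)$ of $\Y$ we have $B_{(a,y)}$ equal to the fibre of $(\Sigma B)_a\to Y_a$ over $y$. So for a map $g:B\to C$ in $\Sh(\Y)$, consider the map $\Sigma g$ over $Y$ at the point $a$. It is the disjoint union over $y\in Y_a$ of the maps $g_{(a,y)}$. Hence $(\Sigma g)_a$ is a bijection if and only if $g_{(a,y)}$ is a bijection for every $y\in Y_a$.

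\textbf{Step 3: conclude.} Let $g:B\to C$ be a map in $\Sh(\Y)$ such that $g_{(a,y)}$ is an isomorphism for all $(a,y)\in\dsA_Y$. By Step 2, $(\Sigma g)_a$ is an isomorphism for every $a\in\dsA$. Since $\dsA$ is separating, $\Sigma g$ is an isomorphism in $\Sh(\X)$. The forgetful functor $\Sh(\X)/Y\to\Sh(\X)$ reflects isomorphisms, so $g$ is an isomorphism in the slice, hence in $\Sh(\Y)$ by Step 1. This is exactly the statement that $(\ev_{(a,y)})_{(a,y)\in\dsA_Y}$ is jointly conservative.

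Nothing here seems hard. The only point requiring care is Step 1: we must check that the equivalence of \Cref{thm:etale} really computes $\Sigma B$ pointwise as the disjoint union of fibres. This follows directly from the description of total spaces, since the fibre of $\B\to\X$ over $p$ is $\bigsqcup_{y}B_{(p,y)}$.

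If one prefers to avoid identifying slices, there is an alternative for Step 3. One can argue as in \Cref{vu-retract-direction-facile}, reducing conservativity to faithfulness since $\Sh(\Y)$ is a pretopos. Faithfulness then follows from the same fibrewise decomposition, applied to two parallel maps $g,h$ that agree on $\dsA_Y$.
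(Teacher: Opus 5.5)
Your proposal is correct and follows the paper's argument essentially verbatim: transport $g$ along $\Sh(\Y)\simeq\Sh(\X)/Y$, observe that its stalk at $a\in\dsA$ is the coproduct $\sum_{y\in Y(a)} g_{(a,y)}$ of bijections, and invoke that $\dsA$ is separating in $\X$. Your explicit remark that the forgetful functor $\Sh(\X)/Y\to\Sh(\X)$ reflects isomorphisms fills in a step that the paper leaves implicit.
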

\begin{proof}
    Let $f: A \to B$ be a map of sheaves $\Sh(\Y)$ that is an isomorphism on the points of $\dsA_Y$, \ie for any $a\in\dsA$ and $y\in Y_a$ the map $f_{(a,y)} : A_{(a,y)}\to B_{(a,y)}$ is a bijection.
    We have $\Sh(\X)/Y \cong \Sh(\Y)$ by \Cref{thm:etale}\ref{prop:slice-etale}, this allows us to see $f$ as a map in $\Sh(\X)$. Concretely, we have the following diagram of étale vu-functors.
    \[\begin{tikzcd}
        \A && \B \\
        & \Y \\
        & \X
        \arrow["f", from=1-1, to=1-3]
        \arrow["{\pi_A}"', from=1-1, to=2-2]
        \arrow["{\pi_B}", from=1-3, to=2-2]
        \arrow["{\pi_Y}", from=2-2, to=3-2]
    \end{tikzcd}\]
    As $\dsA$ is separating in $\X$, it is enough to show that $f: \A\to\B$, seen as a map of sheaves above $\X$, is a bijection on the points of $\dsA$.
    But for any point $a\in\dsA$, the map on the stalks at $a$ is given by 
    \[\textstyle\sum_{y\in Y(a)}A_{(a,y)}\to\sum_{y\in Y(a)}B_{(a,y)}\]
    that is the sum of the maps $(f_{(a,y)} : A_{(a,y)}\to B_{(a,y)})_{y\in Y(a)}$ that are all bijections by assumption.
\end{proof}

\begin{lem}\label{lem:subcl-etalement}
    Let $Y\in\Sh(\X)$ and $(p,\xi)\in\Y$ such that $p \in \subcl_\X(\dsA)$, then $(p,\xi) \in \subcl_{\Y}(\dsA_Y)$.
\end{lem}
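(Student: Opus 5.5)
The plan is to lift a vu-retraction witnessing $p \in \subcl_\X(\dsA)$ directly to the total space $\Y$, using the unique lifting property of the étale vu-functor $\pi_Y : \Y \to \X$. Start from a vu-retraction $i : p \vuto{\sigma} a_s$ and $(r_s : a_s \vuto{\tau_s} p)_{s:\sigma}$ in $\X$, with $(a_s)_{s:\sigma}$ in $\dsA$ and $r_s \circ_\sigma i = \delta : p \vuto{\sigma\cdot\tau_s} p$. Write $(y_s)_{s:\sigma} \coloneqq i_*\xi \in \int_{s:\sigma} Y_{a_s}$; choosing representatives, each $(a_s, y_s)$ is a point of $\dsA_Y$. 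By étaleness (equivalently, by the description of vu-arrows of the total space), $i$ lifts uniquely to $\tilde i : (p,\xi) \vuto{\sigma} (a_s, y_s)$.

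Next, lift the $r_s$. For each $s$, $r_s$ lifts uniquely to $\tilde r_s : (a_s, y_s) \vuto{\tau_s} (p, \zeta_{s,t})$, where $(\zeta_{s,t})_{t:\tau_s} \coloneqq {r_s}_* y_s$. The key point is to show that this codomain is the constant family $\xi$, at least for $\sigma$-all $s$ and $\tau_s$-all $t$, i.e. that $(\zeta_{s,t})_{\sigma\cdot\tau_s} = \delta(\xi)$ in $\int_{\sigma\cdot\tau_s} Y_p$. This follows from functoriality of the vu-sheaf $Y$ on composites: $(r_s \circ_\sigma i)_* \xi = ({r_s}_*({i_*\xi})_s)$, which is exactly $(\zeta_{s,t})$, while $\delta_*\xi$ is the diagonal family $(\xi)_{\sigma\cdot\tau_s}$ (thickening of $\id$ acts by the diagonal map of ultraproducts). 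Since the two arrows are equal, the two families agree. Hence, after modifying on a small set (the families only matter up to the ultrafilter), $\tilde r_s : (a_s,y_s) \vuto{\tau_s} (p,\xi)$.

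Finally, I would check the retraction equation upstairs. The composite $\tilde r_s \circ_\sigma \tilde i$ and the diagonal $\delta_{(p,\xi)}$ in $\Y$ are both $\sigma\cdot\tau_s$-arrows from $(p,\xi)$ to the constant family $(p,\xi)$, and both are sent by $\pi_Y$ to $r_s\circ_\sigma i = \delta_p$, since a vu-functor preserves composites and thickenings of identities. Uniqueness of lifts along the étale $\pi_Y$ then forces them to be equal. Thus $(\tilde i, (\tilde r_s))$ is a vu-retraction exhibiting $(p,\xi)$ as a vu-retract of $(a_s,y_s)_{s:\sigma}$ in $\dsA_Y$.

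The only delicate step is the bookkeeping in the middle: identifying the codomain family of the lifted $\tilde r_s$ with the constant family $\xi$ as an element of the ultraproduct over $\sigma\cdot\tau_s$, and making sure that the choice of representatives for the $\sigma$-family $(y_s)$ and the $\sigma\cdot\tau_s$-family $(\zeta_{s,t})$ is harmless. I would handle this by working throughout in the total-space description of vu-arrows, where a vu-arrow $(p,\xi)\vuto{\sigma}(q_s,\zeta_s)$ is just an $f$ with $f_*\xi = (\zeta_s)$, so that the computation reduces to the equality of families derived above.
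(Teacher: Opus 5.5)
Your proposal is correct and follows essentially the same route as the paper's proof. Both lift $i$ and the $r_s$ to the total space, use $r_s\circ_\sigma i=\delta$ together with the functoriality of $Y$ to show that the lifted $r_s$ land on the constant family at $(p,\xi)$, and then conclude that the composite upstairs is the diagonal by the unique lifting property of the étale vu-functor $\Y\to\X$.
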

\begin{proof}
    By assumption, there exists a vu-retraction $i : p \vuto{\sigma}a_s$ and $(r_s : a_s \vuto{\tau_s}p)_{s:\sigma}$ in $\X$ with the $(a_s)$ in $\dsA$. 
    We denote $i_*(\xi) \in \int_{s:\sigma} Y_{a_s}$ by $(\zeta_s)_{s : \sigma}$, so that the pair $(i,\xi)$ gives a vu-arrow $(p,\xi) \vuto{\sigma} (a_s, \zeta_s)$ in $\Y$.
    Similarly, for $s:\sigma$, we denote ${r_s}_*(\zeta_s) \in \int_{t:\tau_s} Y_p$ by $(\omega_{s,t})_{t:\tau_s}$ so that the pair $(r_s, \zeta_s)$ gives a vu-arrow $(a_s, \zeta_s) \vuto{\tau_s} (p,\omega_{s,t})$ in $\Y$.
    Since $r_s \circ_\sigma i$ is the diagonal, we see that 
    \[
        ((\omega_{s,t})_{t:\tau_s})_{s:\sigma} = ({r_s}_*(\zeta_s))_{s:\sigma} =  (r_s \circ_\sigma i)_*(\xi) = ((\xi)_{t:\tau_s})_{s:\sigma},
    \]
    so that $(r_s, \zeta_s)$ is actually a vu-arrow $(a_s, \zeta_s) \vuto{\tau_s} (p,\xi)$.
    It only remains to see that the composite 
    \[(p,\xi)\vuto{\sigma}((a_s,\zeta_s)\vuto{\tau_s}(p,\xi))\]
    in $\Y$ is the diagonal vu-arrow, and this follows from the unique lifting property of the étale vu-functor $\Y\to\X$.
\end{proof}

\begin{prop}\label{prop:ultraretract-crux}
    Let $[-] : \dsD\to\Sh(\X)$ be a semi-flat diagram separating vu-arrows, $\dsA$ a collection of points of $\X$ and $p$ an object of $\X$.
    The following are equivalent:
    \begin{enumerate}
        \item\label{prop:ultraretract-crux-i}
        $p$ belongs to $\subcl(\dsA)$ in $\X$; 
        
        \item\label{prop:ultraretract-crux-ii}
        for each $\xi \in [D]_p$, the pair $(p, \xi)$ belongs to $\subcl(\dsA_{[D]})$ in $\Tref{\et{[D]}}$;

        \item\label{prop:ultraretract-crux-iii}
        for each pair $\ell = (D_\ell,\xi_\ell)\in\eM_{\dsD}(p)$ (see \Cref{nota:em}), there exists $i_\ell : p \vuto{\sigma_\ell}a_{\ell,s}$ and $(r_{\ell,s} : a_{\ell,s} \vuto{\tau_{\ell,s}}p)_{s:\sigma_\ell}$ in $\X$ with the $(a_{\ell,s})$ in $\dsA$, such that the composite $r_{\ell, s} \circ_{\sigma_\ell} i_\ell : p \vuto{\sigma_\ell.\tau_{\ell,s}}p$ sends $\xi_\ell$ to $(\xi_\ell)_{\sigma_\ell\cdot\tau_{\ell,s}}$.
    \end{enumerate}
\end{prop}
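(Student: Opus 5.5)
I would prove the cycle (i)$\Rightarrow$(ii)$\Rightarrow$(iii)$\Rightarrow$(i), mirroring the structure of \Cref{prop:Barr-categorified}.

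\emph{(i)$\Rightarrow$(ii).} Given $\xi\in[D]_p$, \Cref{lem:subcl-etalement} applied to $Y=[D]$ places $(p,\xi)$ in $\subcl_{\et{[D]}}(\dsA_{[D]})$. By \Cref{rmk:subclosure-Tref}, $\subcl_{\Y}\subseteq\subcl_{\Tref{\Y}}$, so $(p,\xi)$ also lies in the subclosure computed in $\Tref{\et{[D]}}$.

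\emph{(ii)$\Rightarrow$(iii).} Fix $\ell=(D_\ell,\xi_\ell)$. Since $\Tref{\et{[D_\ell]}}$ is taut, \Cref{prop:subclosure-topology}\pref{cond:subclosure-vu-arrow} gives points $(a_{\ell,s},\zeta_s)$ in $\dsA_{[D_\ell]}$ with
\[(p,\xi_\ell)\vuleq{\sigma}(a_{\ell,s},\zeta_s)\quad\text{and}\quad(a_{\ell,s},\zeta_s)\vuleq{\ptuf}(p,\xi_\ell).\]
Unfolding the definition of $\Tref{-}$, these are witnessed by genuine vu-arrows in $\et{[D_\ell]}$ after thickening: an arrow $p\vuto{\tau}a_{\ell,\phi t}$ carrying $\xi_\ell$ to $(\zeta_{\phi t})$, and arrows $a_{\ell,s}\vuto{\kappa_s}p$ carrying $\zeta_s$ to $(\xi_\ell)_{\kappa_s}$. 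I reindex the first family along $\phi$, so the second family becomes $(a_{\ell,\phi t}\vuto{\kappa_{\phi t}}p)_{t:\tau}$, and then compose. The composite $p\vuto{\tau\cdot\kappa_{\phi t}}p$ sends $\xi_\ell$ to the constant family, since elements are pushed forward compositionally. This is exactly the weak retraction asked for in (iii), with $\sigma_\ell:=\tau$.

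\emph{(iii)$\Rightarrow$(i).} This is the main step, and it uses merging as in \Cref{merging-vumap}. Choose an initial ultrafilter $\lambda$ on $\eM_{\dsD}(p)$ by \Cref{def:lambda}. Define
\[i:=p\vuto[\delta]{\ell:\lambda}\bigl(p\vuto[i_\ell]{\sigma_\ell}a_{\ell,s}\bigr),\]
of shape $\lambda\cdot\sigma_\ell$, and take the retractions $r_{\ell,s}$, indexed by $(\ell,s)$. Then $r\circ i$ equals the merged composite $\delta$ followed by $(r_{\ell,s}\circ i_\ell)_\ell$, and I must show this equals the diagonal $p\vuto{}p$. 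Since $[-]$ separates vu-arrows, it suffices to check equality after pushing any $\xi_0\in[D_0]_p$.

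By initiality, for $\lambda$-all $\ell=(D,\xi)$ there is $\gamma:D\to D_0$ with $[\gamma]_p(\xi)=\xi_0$. Naturality of $[\gamma]$ then gives
\[(r_{\ell,s}\circ i_\ell)_*\xi_0=[\gamma]\bigl((r_{\ell,s}\circ i_\ell)_*\xi\bigr)=[\gamma](\xi)_{\text{const}}=(\xi_0)_{\text{const}}.\]
Assembling these along $\lambda$ shows that $(r\circ i)_*\xi_0$ is constant at $\xi_0$, which is also the value of the diagonal on $\xi_0$. Hence $(i,(r_{\ell,s}))$ is a vu-retraction of $p$ onto points of $\dsA$.

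The main obstacle is the bookkeeping in this last step. The shapes must be reassociated as $(\lambda\cdot\sigma_\ell)\cdot\tau_{\ell,s}$ versus $\lambda\cdot(\sigma_\ell\cdot\tau_{\ell,s})$, and the composite must be identified with a thickened diagonal so that separation of vu-arrows applies to two parallel arrows of the same shape. I would handle both points through associativity of composition together with the compatibility of thickening and composition in \Cref{def:vucat}.
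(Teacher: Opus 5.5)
Your proposal is correct and follows essentially the same route as the paper. (i)$\Rightarrow$(ii) uses \Cref{lem:subcl-etalement} together with \Cref{rmk:subclosure-Tref}; (ii)$\Rightarrow$(iii) unfolds $\Tref{-}$ and reindexes the thickening away; and (iii)$\Rightarrow$(i) merges the $i_\ell$ along an initial ultrafilter on $\eM_{\dsD}(p)$, then checks the composite against the diagonal using the separating diagram and naturality of the $[\gamma]$.
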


\begin{proof}\leavevmode
    \begin{enumerate}
        \item[\ref{prop:ultraretract-crux-i} $\Rightarrow$ \ref{prop:ultraretract-crux-ii}]
        Follows from \Cref{lem:subcl-etalement} and \Cref{rmk:subclosure-Tref}.   

        %

        \item[\ref{prop:ultraretract-crux-ii} $\Rightarrow$ \ref{prop:ultraretract-crux-iii}]
        We fix $\ell = (D,\xi) \in \eM_{\dsD}(p)$. By assumption,
        there exists a family $(a_s, \zeta_s)_{s : \sigma}$ in $\dsA_{[D_\ell]}$ such that 
        \[(p, \xi) \vuleq[{\Tref{\et{[D]}}}]{\sigma} (a_s, \zeta_s) \qquad \text{and} \qquad (a_s, \zeta_s) \vuleq[{\Tref{\et{[D]}}}]{} (p,\xi)  \text{, for $s : \sigma$ }.\]
        Given the definition of the topological reflection, we can find a vu-arrow $(p, \xi)\vuto{\sigma}(a_s, \zeta_s)$ (actually, to some thickening of $(a_s, \zeta_s)_{s : \sigma}$, but up to a re-indexing of the $(a_s,\zeta_s)$ we can suppose this thickening trivial) as well as vu-arrows $(a_s, \zeta_s) \vuto{\tau_s} (p,\xi)$.
        By construction of $\et{[D]}$, we obtain a vu-arrow $i : p \vuto{\sigma}a_s$ in $\X$ such that $i_*(\xi) = (\zeta_s)_{s : \sigma}$, and vu-arrows $(r_s : a_s \vuto{\tau_s}p)$ such that ${r_s}_*(\zeta_s) = (\xi)_{\tau_s}$ for $\sigma$-all $s$.
        It follows that
        \[(r_s\circ_\sigma i)_*(\xi) = ({r_s}_*(\zeta_s))_{s:\sigma} = ((\xi)_{t:\tau_s})_{s:\sigma}\]
        and therefore the condition \ref{prop:ultraretract-crux-iii} holds.
        
        \item[\ref{prop:ultraretract-crux-iii} $\Rightarrow$ \ref{prop:ultraretract-crux-i}]
        We consider an initial ultrafilter $\lambda$ on $\eM_{\dsD}(p)$ (see \Cref{def:lambda}), \ie such that, for each $\ell_0$, the set of $\ell$'s for which there exist $\gamma : \ell \to \ell_0$ in $\dsD$ is $\lambda$-large.

    
        Let $\sigma \coloneqq \lambda\cdot\sigma_\ell$, we merge all the maps $i_\ell : p \vuto{\sigma_\ell} a_{\ell,s}$ into a single $\sigma$-arrow,
        \[i : p\vuto[\delta]{\lambda}(p\vuto[i_\ell]{\sigma_\ell}{a_{\ell,s}})\]
        so that the codomain of $i : p\vuto{\sigma}a_{\ell,s}$ is the $\sigma$-family $(a_{\ell,s})$ of $\dsA$, and we collect all the $(r_{\ell,s})$ into a single $\sigma$-family of arrows, so that it is enough to show that the composite $r_{\ell,s} \circ_\sigma i$ is the diagonal $\delta : p \vuto{\sigma\cdot\tau_{\ell,s}}p$.
        
        Let $\tau_\ell \coloneqq (s : \sigma_\ell)\cdot\tau_{\ell,s}$, we consider the $\tau_\ell$ vu-arrows $(j_\ell \coloneqq r_{\ell, s} \circ_{\sigma_\ell} i_\ell)$ so that $r_{\ell,s} \circ_\sigma i = j_\ell \circ_\lambda \delta$, 
        \[p \vuto[i]{\sigma} (a_{\ell, s} \vuto[r_{\ell,s}]{\tau_{\ell,s}} p) = p \vuto[\delta]{\lambda} (p \vuto[i_\ell]{\sigma_\ell} (a_{\ell, s} \vuto[r_{\ell,s}]{\tau_{\ell,s}} p)) = p \vuto[\delta]{\lambda} (p \vuto[j_\ell]{\tau_\ell} p).\]
        
        Since the $[-] : \dsD\to\Sh(\X)$ separates vu-arrows, it suffices to check that each sheaf in $\dsD$ sends $j_\ell \circ_\lambda \delta $ and the diagonal to the same map of sets, that is, that for any $\ell_0 = (D_0,\xi_0) \in \eM_\dsD(p)$, the ultrafamily $(j_\ell \circ_\lambda \delta)_*(\xi_0) = \left({j_\ell}_*(\xi_{0})\right)_{\ell : \lambda}$ is the constant ultrafamily $((\xi_0)_{\tau_\ell})_{\ell:\lambda}$.
        
        We thus fix $\ell_0 = (D_0,\xi_0)$ in $\eM_\dsD(p)$, 
        and we show that ${j_\ell}_*(\xi_0)$ is the constant $\tau_\ell$-family at $\xi_0$ for all $\ell : \lambda$.
        By assumption on $\lambda$, we can assume that for all $\ell : \lambda$ there exists $\gamma : \ell \to \ell_0$, \ie $\gamma : D_\ell \to D_{0}$ such that $[\gamma]_p(\xi_\ell) = \xi_{0}$, and we consider the following naturality square of $j_\ell$, which shows ${j_\ell}_*([\gamma]_p(\xi_\ell)) = ([\gamma]_p)_{\tau_\ell} ({j_\ell}_*(\xi_{\ell}))$.
        \[\begin{tikzcd}
        	{\xi_\ell} & {[D_\ell]_p} & {\int_{\tau_\ell} [D_\ell]_p} & {} \\
        	{\xi_{0} = [\gamma]_p(\xi_\ell)} & {[D_{0}]_p} & {\int_{\tau_\ell}[D_{0}]_p}
        	\arrow["\in"{description}, draw=none, from=1-1, to=1-2]
        	\arrow[dashed, maps to, from=1-1, to=2-1]
        	\arrow["{{j_\ell}_*}", from=1-2, to=1-3]
        	\arrow["{[\gamma]_p}"', from=1-2, to=2-2]
        	\arrow["{([\gamma]_p)_{\tau_\ell}}", from=1-3, to=2-3]
        	\arrow["\in"{description}, draw=none, from=2-1, to=2-2]
        	\arrow["{{j_\ell}_*}"', from=2-2, to=2-3]
        \end{tikzcd}\]
        We conclude: 
        \begin{align*}
            {j_\ell}_*(\xi_{0}) &= {j_\ell}_*([\gamma]_p(\xi_\ell)) & \\
            &= ([\gamma]_p)_{\tau_\ell} ({j_\ell}_*(\xi_{\ell})) &\text{naturality of $j_\ell$} \\
            &= ([\gamma]_p)_{\tau_\ell} ((\xi_\ell)_{\tau_\ell}) & \text{assumption of \ref{prop:ultraretract-crux-iii}}\\
            &= ([\gamma]_p (\xi_\ell))_{\tau_\ell} & \\
            &= (\xi_{0})_{\tau_\ell}. &  \qedhere   
        \end{align*} 

    \end{enumerate}
\end{proof}

\begin{cor}\label{cor:ultraretract}
    Let $\X\subseteq\pt(\E)$ be a full sub-vu-category of $\pt(\E)$, and $\dsA$ a collection of points of $\X$. Then $\subcl_\X(\dsA) = \X$ if and only if $\subcl_{\Tref{\Y}}(\dsA_{Y}) = \Tref{\Y}$ for any $Y = \ev(E)\in\Sh(\X)$, $E\in\E$.
\end{cor}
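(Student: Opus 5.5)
The plan is to apply \Cref{prop:ultraretract-crux} with a semi-flat diagram supplied by \Cref{prop:semi-flat-exist}. Since $\X\subseteq\pt(\E)$ is full, \Cref{prop:semi-flat-exist} gives a small dense full subcategory $\dsD\subseteq\E$ closed under finite products. The diagram $[-]\coloneqq\ev(-)|_\X : \dsD\to\Sh(\X)$ is then semi-flat and separates vu-arrows. The equivalence \ref{prop:ultraretract-crux-i}$\Leftrightarrow$\ref{prop:ultraretract-crux-ii} of \Cref{prop:ultraretract-crux} then says: $p\in\subcl_\X(\dsA)$ if and only if $(p,\xi)\in\subcl_{\Tref{\et{[D]}}}(\dsA_{[D]})$ for every $D\in\dsD$ and $\xi\in[D]_p$.

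For the forward direction, suppose $\subcl_\X(\dsA)=\X$ and take any $E\in\E$ with $Y=\ev(E)$. By \Cref{lem:subcl-etalement}, every $(p,\xi)\in\Y$ lies in $\subcl_\Y(\dsA_Y)$, since $p\in\subcl_\X(\dsA)$. By \Cref{rmk:subclosure-Tref}, $\subcl_\Y(\dsA_Y)\subseteq\subcl_{\Tref{\Y}}(\dsA_Y)$. Hence $\subcl_{\Tref{\Y}}(\dsA_Y)=\Tref{\Y}$, because $\Tref{\Y}$ has the same points as $\Y$. This direction does not need the diagram at all.

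For the converse, assume $\subcl_{\Tref{\Y}}(\dsA_Y)=\Tref{\Y}$ for every $Y=\ev(E)$. Take $p\in\X$. Each $D\in\dsD$ is an object of $\E$, so $[D]=\ev(D)$ restricted to $\X$ is among the allowed $Y$. Thus for every $\xi\in[D]_p$, the point $(p,\xi)$ of $\Tref{\et{[D]}}$ lies in $\subcl(\dsA_{[D]})$, which is exactly condition \ref{prop:ultraretract-crux-ii}. \Cref{prop:ultraretract-crux} then yields $p\in\subcl_\X(\dsA)$.

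The only step needing care is making sure the vu-sheaves $\ev(E)$ restricted to the full sub-vu-category $\X$ give the semi-flat, vu-arrow-separating diagram of \Cref{prop:semi-flat-exist}. This is precisely the ``in particular'' clause of that corollary, so I expect no real obstacle beyond matching notation. One checks that restricting along the full inclusion $\X\subseteq\pt(\E)$ preserves finite products of vu-sheaves, which holds because products are computed pointwise. Separation of vu-arrows transfers directly, since a vu-arrow of $\X$ is a vu-arrow of $\pt(\E)$ and density of $\dsD$ determines natural transformations $p^*\Rightarrow\int_\sigma q_s^*$ by their components on $\dsD$.
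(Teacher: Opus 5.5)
Your proposal is correct and follows the paper's own route: the paper proves this corollary simply by combining \Cref{prop:ultraretract-crux} with the semi-flat, vu-arrow-separating diagram supplied by \Cref{prop:semi-flat-exist}. Your only elaboration is to prove the forward direction for an arbitrary $E\in\E$ directly via \Cref{lem:subcl-etalement} and \Cref{rmk:subclosure-Tref}, which is exactly the paper's argument for \ref{prop:ultraretract-crux-i}$\Rightarrow$\ref{prop:ultraretract-crux-ii} and sidesteps the need for $E$ to lie in $\dsD$.
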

\begin{proof}
    Follows from \Cref{prop:ultraretract-crux} above and \Cref{prop:semi-flat-exist}.
\end{proof}

\begin{Thm}\label{prop:subcl-sep-categorified}
    Let $\X\subseteq\pt(\E)$ be a full sub-vu-category of $\pt(\E)$, and $\dsA$ a collection of points of $\X$. Then $\dsA$ is separating if and only if $\subcl_\X(\dsA) = \X$. 
\end{Thm}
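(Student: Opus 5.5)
One direction is immediate. If $\subcl_\X(\dsA)=\X$, then every point of $\X$ is a vu-retract of points in $\dsA$, and \Cref{vu-retract-direction-facile} shows that $\dsA$ is separating. This direction uses nothing about $\X$ sitting inside $\pt(\E)$.

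For the converse, assume $\dsA$ is separating in $\X$. By \Cref{cor:ultraretract} it suffices to show that $\subcl_{\Tref{\Y}}(\dsA_Y)=\Tref{\Y}$ for every vu-sheaf $Y=\ev(E)$ with $E\in\E$. Here $\Y\to\X$ is the total space of $Y$. The argument then proceeds in three steps.

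\begin{enumerate}
\item By \Cref{prop:separating-slice}, $\dsA_Y$ is a separating collection of points of $\Y$.
\item Transport this to the topological reflection. Opens of $\Tref{\Y}$ are vu-functors $\Tref{\Y}\to[1]$. By the universal property of the reflection, these correspond to vu-functors $\Y\to[1]$, that is, to subterminal objects of $\Sh(\Y)$. Now take opens $U\subseteq V$ of $\Tref{\Y}$ that agree on $\dsA_Y$. They give a monomorphism $U\hookrightarrow V$ between subterminal vu-sheaves which is a bijection at every point of $\dsA_Y$. Since $\dsA_Y$ is separating in $\Y$, this monomorphism is an isomorphism, so $U=V$. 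Hence $\dsA_Y$ is separating in the vu-poset $\Tref{\Y}$ in the sense of \Cref{def:separating-in-vu-pos}.
\item Apply \Cref{prop:subcl-sep-topological-case} to the vu-poset $\Tref{\Y}$. It gives $\subcl_{\Tref{\Y}}(\dsA_Y)=\Tref{\Y}$, which is what \Cref{cor:ultraretract} requires.
\end{enumerate}

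Step 2 needs the most care: the identification of opens of $\Tref{\Y}$ with subterminal vu-sheaves on $\Y$, and the compatibility of stalks. Concretely, a subterminal vu-sheaf takes value $1$ at $y$ exactly when $y$ lies in the corresponding upset, so a pointwise bijection at points of $\dsA_Y$ means exactly that $U$ and $V$ agree on $\dsA_Y$. \Cref{prop:subcl-sep-topological-case} is stated for arbitrary vu-posets, so no smallness or tautness of $\Tref{\Y}$ is needed. Boundedness is not needed either, since the argument only uses the category $\Sh(\Y)$ and its subterminals.

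The genuinely hard input is the merging along a semi-flat diagram. That work is already packaged in \Cref{prop:ultraretract-crux} and \Cref{cor:ultraretract}, and it is the step where the hypothesis $\X\subseteq\pt(\E)$ is used, through \Cref{prop:semi-flat-exist}.
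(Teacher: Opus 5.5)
Your proposal is correct and follows the paper's proof: the easy direction via \Cref{vu-retract-direction-facile}, and the converse by reducing, through \Cref{cor:ultraretract} and \Cref{prop:subcl-sep-topological-case}, to separation of $\dsA_Y$ in $\Tref{\Y}$, which comes from \Cref{prop:separating-slice}. Your Step 2 spells out the paper's one-word \emph{a fortiori}, and correctly so: opens of $\Tref{\Y}$ correspond to subterminal vu-sheaves on $\Y$, with stalks $1$ exactly on the upset.
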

\begin{proof}
    It is a sufficient condition by \Cref{vu-retract-direction-facile}, we now show that it is necessary, we suppose $\dsA$ separating and show that $\subcl_X(\dsA) = \X$. By \Cref{cor:ultraretract} and \Cref{prop:subcl-sep-topological-case}, it is sufficient to show that $\dsA_{Y}$ is separating in $\Tref{\Y}$ for any $Y = \ev(E)$, $E\in\E$. But for each $E$, the collection of points $\dsA_{Y}$ is separating in $\Y$ by \Cref{prop:separating-slice}, and so, a fortiori, separating in $\Tref{\Y}$.
\end{proof}


\subsection{Geometric surjections}\label{sec:surj}

The present section can be understood as a ``relativisation'' of the previous one, where we define a class of maps based on the notion of subclosure and see that it corresponds to the all-important class of surjective geometric morphisms of toposes.
We recall that a morphism of locales $f : X\to Y$ is surjective if $f^* : \Open(Y)\to\Open(X)$ preserves the order strictly, similarly a geometric morphism is surjective if its inverse image is conservative. However, a continuous map of topological spaces $f: X\to Y$ can induce a surjection of locales without being surjective on points (for example the sobrification of a non-sober space), but taking the subclosure of the image fixes this discrepancy: a continuous map of topological spaces induces a surjection of locales if and only if $Y = \subcl(f(X))$. This leads to the following notion of \emph{vu-subdense functor}.

\begin{defn}\label{def:vu-subdense}
    A vu-functor $F : \X \to \Y$ is \emph{vu-subdense} if the subclosure of its image $F(\X) \subseteq \Y$ is the whole of $\Y$,
    \[\subcl_\Y(F(X)) = \Y,\]
    \ie if any point of $\Y$ is a vu-retract of an ultrafamily of points in the image of $F$.
\end{defn}

For topological spaces, \Cref{prop:subclosure-topology} lets us rephrase vu-subdensity.

\begin{prop}\label{prop:vu-subdense-topological}
    Let $f : X \to Y$ be a functor of topological spaces (or equivalently, essentially small taut vu-posets), the following assertions are equivalent:
    \begin{enumerate}
        \item \label{prop:vu-subdense-topological-i}%
        $f$ is vu-subdense; 
        \item \label{prop:vu-subdense-topological-ii}%
        $f^{-1} : \mathcal{P}(Y) \to \mathcal{P}(X)$ takes non-empty locally closed sets to non-empty locally closed sets; 
        \item \label{prop:vu-subdense-topological-iii}%
        $f^* : \Open(Y) \to \Open(X)$ preserves the order \emph{strictly}, that is $f$ induces a surjection of locales.
    \end{enumerate}
\end{prop}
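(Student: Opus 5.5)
I will prove the cycle (i)$\Leftrightarrow$(ii) and (ii)$\Leftrightarrow$(iii), taking $X$ and $Y$ to be essentially small taut vu-posets. Opens are then exactly upsets (\Cref{prop:opens-are-upsets}), and for a single point both notions of vu-retract coincide with subclosure (\Cref{ex:vu-retract}). Fix $f : X \to Y$ throughout.

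\emph{Step (i)$\Leftrightarrow$(ii).} By \Cref{prop:subclosure-topology}\pref{cond:subclosure-johnstone}, a point $q\in Y$ lies in $\subcl(f(X))$ if and only if every locally closed subset $L\ni q$ meets $f(X)$. Meeting $f(X)$ means exactly $f^{-1}(L)\neq\emptyset$. So $\subcl(f(X))=Y$ holds if and only if $f^{-1}$ sends every non-empty locally closed $L$ to a non-empty set. The preimage $f^{-1}(L)=f^{-1}(U)\cap f^{-1}(F)$ is automatically locally closed, because $f$ is continuous: a vu-functor pulls back upsets to upsets and downward-closed sets to downward-closed sets, directly from \Cref{def:open}. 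Hence (ii) is just a restatement of (i).

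\emph{Step (ii)$\Rightarrow$(iii).} Let $U\subsetneq V$ be opens of $Y$. Then $V\setminus U = V\cap(Y\setminus U)$ is a non-empty locally closed set. By (ii) its preimage $f^{-1}(V)\setminus f^{-1}(U)$ is non-empty, so $f^{-1}(U)\subsetneq f^{-1}(V)$. This shows that $f^*$ reflects the order strictly. Since $f^*$ is a frame map, this is equivalent to $f^*$ being injective, which is equivalent to the induced morphism of locales being surjective.

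\emph{Step (iii)$\Rightarrow$(ii).} Let $L = U\cap F$ be non-empty locally closed, and put $V := U\setminus F = U\cap(Y\setminus F)$. This $V$ is open and $V\subsetneq U$, since some point of $L$ lies in $U$ but not in $V$. Strictness of $f^*$ gives $f^{-1}(V)\subsetneq f^{-1}(U)$. Any point in the difference lies in $f^{-1}(U)\cap f^{-1}(F)=f^{-1}(L)$, so $f^{-1}(L)\neq\emptyset$.

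\emph{Alternative route and main obstacle.} One could instead get (i)$\Leftrightarrow$(iii) by applying \Cref{prop:subcl-sep-topological-case} to the image $f(X)\subseteq Y$. This works because two opens of $Y$ agree on $f(X)$ exactly when their $f^*$-images coincide. The only subtle point in the whole argument is checking that strict preservation of order is what locale surjectivity means. This needs the standard fact that a frame map is injective if and only if it reflects the order, because it preserves binary meets: if $f^*U=f^*V$, then $f^*(U\cap V)=f^*U$, so injectivity gives $U\cap V=U$, i.e.\ $U\subseteq V$. I expect no genuine obstacle; everything else is bookkeeping with upsets and downsets.
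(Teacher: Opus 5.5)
Your proof is correct and is essentially the paper's: the paper proves (i)$\Rightarrow$(ii)$\Rightarrow$(iii) exactly as you do, and closes the cycle with (iii)$\Rightarrow$(i) by invoking \Cref{prop:subcl-sep-topological-case}, which is your ``alternative route''. Your direct (iii)$\Rightarrow$(ii) via the open $U\setminus F$, combined with \Cref{prop:subclosure-topology}, just unrolls the trick the paper uses inside the proof of that proposition, where it takes $F=\cl\{p\}$.
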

\begin{proof}\leavevmode
    \begin{enumerate}
        \item[\ref{prop:vu-subdense-topological-i}$\Rightarrow$\ref{prop:vu-subdense-topological-ii}]%
        If $A$ is a non-empty locally closed set of $Y$, then $A$ must meet $f(X)$ non-trivially, that is, $f^{-1}(A)$ is non-empty, and it is clearly locally closed since $f$ is continuous.  
        \item[\ref{prop:vu-subdense-topological-ii}$\Rightarrow$\ref{prop:vu-subdense-topological-iii}]%
        If $U \subsetneq V$ in $\Open(Y)$, then $(Y \setminus U) \cap V$ is a non-empty locally closed set, hence so is its image under $f^{-1}$. That is, $f^*(U) \subsetneq f^*(V)$.
        \item[\ref{prop:vu-subdense-topological-iii}$\Rightarrow$\ref{prop:vu-subdense-topological-i}]%
        This is a reformulation of \Cref{prop:subcl-sep-topological-case}.\qedhere
    \end{enumerate}
\end{proof}


Perhaps unsurprisingly at this point, we go on to show that vu-subdense functors enjoy stability properties similar those we singled out for vu-full functors. The following properties are similar to the ones for vu-full functors from \Cref{prop:vufull}, except that vu-subdense functors are not stable under arbitrary pullbacks, but only under pullbacks along étale functors.

\begin{prop}\label{prop:subdense-stability}\leavevmode
    \begin{enumerate}
        \item\label{prop:subdense-stability:o} Let $F :\X\to \Y$ and $G:\Y\to\Z$ be vu-functors.
        If $F$ and $G$ are vu-subdense then so is $G\circ F$. If $G\circ F$ is vu-subdense then so is $G$. And if $G\circ F$ is vu-subdense and $G$ is fully faithful then so is $F$. 
        \item\label{prop:subdense-stability:i} Vu-subdense functors are stable under topological reflection.
        \item\label{prop:subdense-stability:ii} Vu-subdense functors are stable under pullback along étale functors, \ie if $F : \X \to \Y$ is vu-subdense, so is $F/A : F^*\A \to \A$ for each $A \in \Sh(\Y)$.
        \item\label{prop:subdense-stability:iii} Vu-subdense functors are stable under topological étalements (see \Cref{def:etalements}), \ie if $F : \X \to \Y$ is vu-subdense, so is $\Tref{F/A} : \Tref{F^*\A} \to \Tref{\A}$ for each $A \in \Sh(\Y)$.
    \end{enumerate}
        %
        %
\end{prop}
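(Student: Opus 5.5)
We prove the four items in order; each one is a direct manipulation of vu-retractions.

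For (i), suppose $F$ and $G$ are vu-subdense and let $z\in\Z$. Since $G$ is vu-subdense, $z$ is a vu-retract of a family $(G(y_s))_{s:\sigma}$ via $i:z\vuto{\sigma}G(y_s)$ and $r_s:G(y_s)\vuto{\tau_s}z$. Since $F$ is vu-subdense, each $y_s$ is a vu-retract of a family $(F(x_{s,k}))_{k:\kappa_s}$ via $i'_s$ and $r'_{s,k}$. We build the composite retraction
\[z\vuto[i]{\sigma}\bigl(G(y_s)\vuto[G(i'_s)]{\kappa_s}GF(x_{s,k})\bigr)\]
together with $r_s\circ_{\ptuf}G(r'_{s,k})$ suitably reindexed. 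The retraction equation then follows from associativity, from functoriality of $G$ (so that $G$ sends $r'_{s,k}\circ i'_s=\delta$ to the diagonal), and from axiom (iii)(a) moving the diagonal past $r_s$. The tensor shapes must be rearranged carefully, since the diagonal of $G(y_s)$ of shape $\kappa_s\cdot\mu_{s,k}$ composed with $r_s$ equals a thickening of $r_s$. The second claim of (i) is immediate, because $F(\X)$-retracts are $GF(\X)$-points viewed in $G(\Y)$. For the third claim, given $x$ (read as $y$ with $G$ fully faithful), take a vu-retraction of $G(y)$ onto $GF(\X)$ and lift all its arrows uniquely along $G$. Faithfulness then preserves the equation $r\circ i=\delta$, since $G$ preserves diagonals and composites.

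For (ii), a vu-retraction in $\Y$ maps to $\Tref{\Y}$, and the points of $\Tref{\X}$ coincide with those of $\X$. Hence $\subcl_{\Y}(F\X)\subseteq\subcl_{\Tref\Y}(F\X)$ by \Cref{rmk:subclosure-Tref}.

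For (iii), take $(q,\xi)\in\A$ and a vu-retraction of $q$ onto the image of $F$. Applying \Cref{lem:subcl-etalement} gives a retraction of $(q,\xi)$ onto points $(F(x_s),\zeta_s)$. These points lie in the image of $F/A$, since $F^*\A$ has objects $(x,\zeta)$ with $\zeta\in A_{F(x)}$; this uses the explicit 2-pullback along an étale functor, which is strict. Finally, (iv) follows by combining (iii) with (ii).

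The main obstacle is the first claim of (i), namely checking that the composite of two vu-retractions is a vu-retraction. This requires correctly bookkeeping the reindexings $\id\cdot\psi$ and $\phi\cdot\id$ from the compatibility axioms; the other items are routine.
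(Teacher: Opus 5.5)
Your proposal is correct and follows the paper's proof. The paper calls (i) clear, derives (ii) from \Cref{rmk:subclosure-Tref}, derives (iii) from \Cref{lem:subcl-etalement}, and gets (iv) from the previous two, exactly as you do. Your explicit check in (i) that vu-retractions compose goes through, with one notational fix: the retraction should be $r_s\circ_{\mu_{s,k}}G(r'_{s,k})$, composed over the shape $\mu_{s,k}$ of $r'_{s,k}$ rather than over $\ptuf$. With that fix, axiom (iii)(a) turns $r_s$ after the diagonal into a thickening of $r_s$, and axiom (iii)(b) pulls this thickening out to leave a thickening of $\delta$, which is again a diagonal.
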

\begin{proof}
        \ref{prop:subdense-stability:o} is clear,
        \ref{prop:subdense-stability:i} is a consequence of \Cref{rmk:subclosure-Tref}, \ref{prop:subdense-stability:ii} follows from \Cref{lem:subcl-etalement}, 
        and \ref{prop:subdense-stability:iii} follows from the previous two.
\end{proof}

The main result of the previous section now takes a familiar form, analogous to \Cref{shc-vufull-cor}: assuming the codomain is a full sub-vu-category of one arising from the points of a topos, vu-subdense functors can be detected by their topological étalements.

\begin{prop}\label{prop:vu-subdense-slice}
    Let $\Y \subseteq \pt(\E)$ be a full sub-vu-category of $\pt(\E)$. A vu-functor $F : \X\to\Y$ is vu-subdense if and only if all its topological étalements above sheaves from $\E$ are vu-subdense, \ie  $\Tref{F/A} : \Tref{F^*\A} \to \Tref{\A}$ is vu-subdense for any $A = \ev(E) \in \Sh(\Y)$ with $E\in\E$.
\end{prop}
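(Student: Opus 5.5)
The forward direction is immediate from \Cref{prop:subdense-stability}\ref{prop:subdense-stability:iii}: if $F$ is vu-subdense, then so is every topological étalement $\Tref{F/A}$, in particular for $A=\ev(E)$. The real content is the converse. The plan is to reduce it to \Cref{prop:ultraretract-crux}, applied inside $\Y$ to the collection of points $\dsA \coloneqq F(\X)$.

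So assume every $\Tref{F/A}$ with $A=\ev(E)$ is vu-subdense. By \Cref{prop:semi-flat-exist}, choose a small semi-flat diagram $[-]:\dsD\to\Sh(\Y)$ of vu-sheaves of the form $\ev(E)$ that separates vu-arrows of $\Y$. Fix a point $p\in\Y$. By \Cref{prop:ultraretract-crux}, to get $p\in\subcl_\Y(F(\X))$ it suffices to verify condition \ref{prop:ultraretract-crux-ii}: for each $D$ and each $\xi\in[D]_p$, the point $(p,\xi)$ lies in $\subcl(\dsA_{[D]})$ computed in $\Tref{\et{[D]}}$. Since $p$ is arbitrary, this gives $\subcl_\Y(F(\X))=\Y$.

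To verify \ref{prop:ultraretract-crux-ii}, set $A=[D]$. The hypothesis says that $\Tref{F^*\A}\to\Tref{\A}$ is vu-subdense, so $(p,\xi)$ is in the subclosure of the image of $F^*\A$ in $\Tref{\A}$. I will identify this image with $\dsA_{A}$ by unwinding the explicit description of the pullback $F^*\A$ \cite[6.2(ii)]{Saa}. Its objects are pairs $(x,\zeta)$ with $x\in\X$ and $\zeta\in A_{F(x)}$, and $(x,\zeta)$ is sent to $(F(x),\zeta)$. These images are exactly the pairs $(a,\zeta)$ with $a\in F(\X)$ and $\zeta\in A_a$, that is, exactly $\dsA_A$. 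Taking the topological reflection does not change the underlying points, so the two subclosures agree and \ref{prop:ultraretract-crux-ii} holds.

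The main subtlety I expect is this identification. The pullback is only a 2-pullback, so objects carry an isomorphism $\theta: F(x)\cong q$, and points of $\A$ are taken up to such isomorphisms. This should be harmless, because subclosure in a vu-poset is invariant under isomorphic points. It is also worth checking that \Cref{prop:ultraretract-crux} only needs the diagram to separate vu-arrows of $\Y$, which the restriction of $\ev$ to the full sub-vu-category $\Y\subseteq\pt(\E)$ provides by \Cref{prop:semi-flat-exist}.
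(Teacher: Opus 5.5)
Your proposal is correct and takes the paper's route. The forward direction is \Cref{prop:subdense-stability}\ref{prop:subdense-stability:iii}, and the converse is \Cref{cor:ultraretract} applied to $\dsA = F(\X)$; you just unfold that corollary into \Cref{prop:ultraretract-crux} together with \Cref{prop:semi-flat-exist}. Your explicit identification of the image of $\Tref{F^*\A}\to\Tref{\A}$ with $\dsA_A$, which the paper leaves implicit, is accurate, and invariance of subclosure under isomorphic points disposes of the 2-pullback concern.
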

\begin{proof}
    The direct direction was just shown in \Cref{prop:subdense-stability}, and the converse follows from \Cref{cor:ultraretract}.
\end{proof}

The class of geometric surjections also enjoys similar stability conditions.

\begin{prop}\leavevmode\label{prop:surj-prop}
    \begin{enumerate}
        \item\label{prop:surj-prop:i} Geometric surjections are stable under localic reflection.
        \item\label{prop:surj-prop:ii} Geometric surjections morphisms are stable under pullback along étale geometric morphisms.
        \item\label{prop:surj-prop:iv} A geometric morphism $f : \F\to\E$ is surjective if and only if its localic slices (see \Cref{def:loc-slice}) are surjections of locales, \ie $\Lref{f/E}  : \Lref{\F/f^*E} \to\Lref{\E/E}$ 
        is a surjection of locales for any $E\in\E$.
    \end{enumerate}
\end{prop}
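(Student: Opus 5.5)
We prove the three items in order, since each feeds into the next; throughout we use that a geometric morphism $f : \F\to\E$ is surjective exactly when $f^*$ is faithful, equivalently conservative, equivalently when $f^*$ reflects the order on subobjects.

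For \ref{prop:surj-prop:i}, recall that the localic reflection sends $\E$ to the locale $\Sub_\E(1)$, and that $\Lref{f}$ has inverse image the restriction $f^* : \Sub_\E(1)\to\Sub_\F(1)$. If $f^*$ is conservative, it reflects the order on subobjects of $1$. Hence if $f^*U \leq f^*V$ then $f^*(U\wedge V) = f^*U$, which forces $U\wedge V = U$ because $f^*$ preserves finite meets. So $\Lref{f}^*$ is order-reflecting, which is exactly what it means for a locale map to be surjective.

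For \ref{prop:surj-prop:ii}, we use the standard pullback square in \Cref{def:loc-slice}, which identifies the inverse image of $f/E : \F/f^*E\to\E/E$ with $f^*$ acting on objects over $E$. An arrow $\alpha$ over $E$ is an isomorphism in the slice if and only if it is an isomorphism in $\E$, and the same holds in $\F/f^*E$. Consequently conservativity of $f^*$ passes directly to $(f/E)^*$. We can also cite \cite[A4.2.x]{johnstone:elephant} for this fact.

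For \ref{prop:surj-prop:iv}, the forward direction is obtained by combining \ref{prop:surj-prop:ii} and \ref{prop:surj-prop:i}. The converse is the main point. Assume every localic slice is surjective. It suffices to show that $f^*$ reflects the order on subobjects, i.e.\ that for $A, B\rightarrowtail E$ with $f^*A\leq f^*B$ as subobjects of $f^*E$ we have $A\leq B$. The key observation is that $\Sub_\E(E)$ is the frame of opens of $\Lref{\E/E}$, and likewise $\Sub_\F(f^*E)$ is the frame of $\Lref{\F/f^*E}$, with the localic slice acting on opens precisely by $f^*$. Surjectivity of this locale map therefore says exactly that $f^*:\Sub(E)\to\Sub(f^*E)$ reflects the order, which gives $A\leq B$.

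It remains to pass from reflection of the subobject order to conservativity of $f^*$. Suppose $f^*(\alpha)$ is an isomorphism for some $\alpha : X\to Y$. Since $f^*$ preserves images, the image of $\alpha$, viewed as a subobject of $Y$, is sent to the top subobject; by order reflection it is already the top, so $\alpha$ is epic. For monicity we use the kernel pair: the diagonal $X\rightarrowtail X\times_Y X$ becomes the top subobject after applying $f^*$, hence is already the top, so $\alpha$ is monic. Therefore $\alpha$ is an isomorphism, as required. We expect no real obstacle here; the only point that needs care is the identification of the opens of the localic slice with $\Sub(E)$.
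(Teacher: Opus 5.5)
Your proposal is correct and follows the paper's route. The paper proves (i) and (ii) by citing the Elephant (or ``by inspection''), and gets (iii) by identifying the frame of $\Lref{\E/E}$ with $\Sub_\E(E)$ and invoking Mac Lane--Moerdijk's characterisation of surjections via injectivity of $f^*:\Sub(E)\to\Sub(f^*E)$. You prove these cited facts directly, including the passage from order-reflection on subobjects to conservativity via images and the kernel-pair diagonal, and every step checks out.
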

\begin{proof}
    The claim \ref{prop:surj-prop:i} is exactly \cite[A4.6.12]{johnstone:elephant},
    claim \ref{prop:surj-prop:ii} follows from inspection or \cite[C3.1.24]{johnstone:elephant} (take a discrete internal category there),
    and \ref{prop:surj-prop:iv} is a reformulation of \cite[VII.4, Lem.~3]{mac-lane-moerdijk}.
\end{proof}

Finally, we put together all the above results to get the following.
\begin{lem}\label{lem:surj-0dim}\leavevmode    
    \begin{enumerate}
        \item\label{lem:surj-0dim:i}\label{lem:surj-0dim:iii} A vu-functor between topological spaces is vu-subdense if and only if it induces a surjection of locales. In particular the sobrification $X\to\pt(\Open(X))$ is a vu-subdense embedding. 
        \item\label{lem:surj-0dim:ii} For any topos with enough points $\E$, the vu-functor $\Tref{\pt(\E)}\to\pt(\Lref{\E})$ is a vu-subdense embedding.
    \end{enumerate}
\end{lem}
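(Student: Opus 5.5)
For \ref{lem:surj-0dim:i}, I will use \Cref{prop:vu-subdense-topological}, whose equivalence \ref{prop:vu-subdense-topological-i}$\Leftrightarrow$\ref{prop:vu-subdense-topological-iii} is exactly the first claim. For the sobrification $X\to\pt(\Open(X))$, I check the two properties separately.

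\emph{Embedding.} The map is fully faithful on vu-posets. Convergence in $X$ and convergence in $\pt(\Open(X))$ are both defined by membership in opens, and the opens of $\pt(\Open(X))$ restrict bijectively to those of $X$. Then \Cref{lem:full-0dim}, or directly \Cref{prop:opens-are-upsets}, gives full faithfulness.

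\emph{Vu-subdense.} The induced morphism of locales $\Open(\pt(\Open(X)))\to\Open(X)$ is the standard isomorphism. In particular it is a surjection of locales, and the first part applies.

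For \ref{lem:surj-0dim:ii}, full faithfulness of $\Tref{\pt(\E)}\to\pt(\Lref{\E})$ is \Cref{prop:rpt=ptr}. Vu-subdensity is the harder part, since points of $\Lref{\E}$ need not lift to $\E$. I use the first part: both sides are topological spaces, because $\Tref{\pt(\E)}$ is bounded by \Cref{prop:Tref-adjoint}, so it suffices to show that the induced map of locales is surjective. By \Cref{circular-square} applied to $\X=\pt(\E)$, we have $\shloc(\Tref{\pt(\E)})\cong\Lref{\Sh(\pt(\E))}$. Since $\E$ has enough points, $\Sh(\pt(\E))\simeq\E$ by \Cref{thm:reconstruction}, so $\shloc(\Tref{\pt(\E)})\cong\Lref{\E}$.

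The remaining step, which I expect to need the most care, is that under these identifications the map of locales $\shloc(\Tref{\pt(\E)})\to\shloc(\pt(\Lref{\E}))\cong\Lref{\E}$ is the identity. The last isomorphism holds because $\Lref{\E}$ is spatial: $\E$ has enough points, hence so does its localic reflection. To verify this, I trace an open, that is, a subterminal $U\in\E$. It is sent to $\{q\in\pt(\Lref{\E}) : q\in U\}$, and pulling back gives $\{p : p^*U\neq\emptyset\}$, which is exactly the open of $\Tref{\pt(\E)}$ corresponding to $U$ under \Cref{circular-square}. Hence the frame map is an isomorphism, in particular injective, so the map is a surjection of locales and \ref{lem:surj-0dim:i} concludes.
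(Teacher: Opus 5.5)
Your proof is correct and essentially the paper's: (i) comes from \Cref{prop:vu-subdense-topological}, and for (ii) the paper gets full faithfulness from \Cref{prop:rpt=ptr} and vu-subdensity from the fact that the points of $\E$ form a separating class of points of $\Lref{\E}$ (since $\E\to\Lref{\E}$ is surjective and $\E$ has enough points), which is exactly the injectivity of the frame map that you check by identifying it with the identity of $\Sub_\E(1)$. One small slip: it is $\pt(\E)$, not $\Tref{\pt(\E)}$, that is bounded, and this is what lets \Cref{prop:Tref-adjoint} treat $\Tref{\pt(\E)}$ as a topological space.
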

\begin{proof}
    \ref{lem:surj-0dim:i} follows easily from \Cref{prop:vu-subdense-topological}. The vu-functor $\Tref{\pt(\E)}\to\pt(\Lref{\E})$ is fully faithful by \Cref{prop:rpt=ptr}, so it only remains to show that is it vu-subdense. For that, it is enough to notice that the points of $\E$ induce a separating class of points of $\Lref{\E}$ (because $\E\coverto\Lref{\E}$ is surjective and $\E$ is assumed to have enough points).
\end{proof}

\begin{Thm}\leavevmode\label{thm:surj}
    \begin{enumerate}
        \item Let $F : \X\to\Y$ be a vu-subdense vu-functor between bounded vu-categories, then $\Sh(F)$ is surjective. 
        \item Let $\X$ be a bounded vu-category, then $\eta_{\X} : \X\to\pt(\Sh(\X))$ is vu-subdense.
        \item Let $f: \F\to\E$ be a geometric surjection, then $\pt(f)$ is vu-subdense.
    \end{enumerate}
\end{Thm}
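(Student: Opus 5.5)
Each of the three claims follows the template of \Cref{thm:vufull}: reduce to the localic slices, respectively the topological étalements, and settle the 0-dimensional case with \Cref{lem:surj-0dim}.

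For (i), I verify the criterion of \Cref{prop:surj-prop}\ref{prop:surj-prop:iv}. Fix $B\in\Sh(\Y)$. By \Cref{prop:loc-slice}, the localic slice of $\Sh(F)$ above $B$ is $\shloc(\Tref{F^*\B})\to\shloc(\Tref{\B})$. Since $F$ is vu-subdense, \Cref{prop:subdense-stability}\ref{prop:subdense-stability:iii} shows that the topological étalement $\Tref{F^*\B}\to\Tref{\B}$ is vu-subdense. Both spaces are topological, being reflections of bounded vu-categories (\Cref{prop:Tref-adjoint}), so \Cref{lem:surj-0dim}\ref{lem:surj-0dim:i} makes the induced morphism of locales a surjection. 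Hence every localic slice is surjective, and so $\Sh(F)$ is surjective.

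For (ii), the codomain is $\pt(\Sh(\X))$, so I may use \Cref{prop:vu-subdense-slice} with $\E=\Sh(\X)$ and $\Y=\pt(\E)$. It then suffices to check that, for each $A\in\Sh(\X)$, the topological étalement of $\eta_\X$ above $\ev(A)$ is vu-subdense. By \Cref{prop:etalements}\ref{prop:etalements:i} this étalement is $\Tref{\A}\to\Tref{\pt(\Sh(\A))}$. I compose it with $\Tref{\pt(\Sh(\A))}\to\pt(\Lref{\Sh(\A)})$ and compare with $\Tref{\A}\to\pt(\shloc(\Tref{\A}))\cong\pt(\Lref{\Sh(\A)})$, using \Cref{circular-square}. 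The latter map is the sobrification, which is vu-subdense by \Cref{lem:surj-0dim}\ref{lem:surj-0dim:i}, so the composite is vu-subdense. The second factor $\Tref{\pt(\Sh(\A))}\to\pt(\Lref{\Sh(\A)})$ is fully faithful by \Cref{prop:rpt=ptr}. The cancellation clause of \Cref{prop:subdense-stability}\ref{prop:subdense-stability:o} (if $G\circ F$ is vu-subdense and $G$ is fully faithful then $F$ is vu-subdense) then gives the claim. One must check that the square commutes up to isomorphism, which follows from naturality of the identifications.

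For (iii), I again apply \Cref{prop:vu-subdense-slice}, now with $\Y=\pt(\E)$. By \Cref{prop:etalements}\ref{prop:etalements:ii}, the topological étalement above $\ev(A)$ is $\Tref{\pt(\F/f^*A)}\to\Tref{\pt(\E/A)}$. I place it in the square whose vertical maps go into $\pt(\Lref{\F/f^*A})$ and $\pt(\Lref{\E/A})$. The bottom map is $\pt$ of the localic slice, which is a surjection of locales by \Cref{prop:surj-prop}\ref{prop:surj-prop:iv}.

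The main obstacle is that \Cref{lem:surj-0dim}\ref{lem:surj-0dim:ii} requires enough points, which neither $\F$ nor the slices are assumed to have. Without it the left vertical map need not be vu-subdense, so the square does not close. If the theorem intends $\E,\F$ to have enough points, as in the first column of the table, then:
\begin{itemize}
\item the slices $\F/f^*A$ and $\E/A$ also have enough points;
\item the left vertical map is vu-subdense by \Cref{lem:surj-0dim}\ref{lem:surj-0dim:ii};
\item the bottom map, as a surjection of locales between spaces with enough points, is vu-subdense on points by \Cref{lem:surj-0dim}\ref{lem:surj-0dim:i}.
\end{itemize}
The composite down-then-across is therefore vu-subdense. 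Since the right vertical map is fully faithful (\Cref{prop:rpt=ptr}), cancellation via \Cref{prop:subdense-stability}\ref{prop:subdense-stability:o} yields that the top étalement is vu-subdense.

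If $\F$ is not assumed to have enough points, I would instead replace $\F$ by its restriction to points, the largest subtopos with enough points. I would then need an argument that the composite with $f$ remains surjective, and I expect this to be the delicate point.
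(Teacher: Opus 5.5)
Your proposal is correct and matches the paper's proof step for step. For (i) you go through the localic slices and \Cref{lem:surj-0dim}; for (ii) and (iii) you use \Cref{prop:vu-subdense-slice} with the same commutative squares, closed by the cancellation clause of \Cref{prop:subdense-stability}\ref{prop:subdense-stability:o}.

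You are also right that (iii) tacitly requires $\F$ (and hence $\E$) to have enough points. The paper applies \Cref{lem:surj-0dim}\ref{lem:surj-0dim:ii} to $\F/f^*A$, and its table restricts the first column to such toposes. Your fallback is not merely delicate but impossible: a nondegenerate pointless $\F$ gives a surjection $\F\to\Set$, and its image $\emptyset\to\pt(\Set)$ under $\pt$ is not vu-subdense.
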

\begin{proof}\leavevmode
    \begin{enumerate}
        \item We show the hypothesis of \Cref{prop:surj-prop}\pref{prop:surj-prop:iv}. We take $B\in\Sh(\Y)$, by \Cref{prop:loc-slice} we have to show that 
        \[\shloc(\Tref{F^*\B}) \to \shloc(\Tref{\B})\] is a surjection of locales.
        
        By \Cref{prop:subdense-stability}\ref{prop:subdense-stability:iii} $\Tref{F^*\B} \to \Tref{\B}$ is vu-subdense, and we conclude by \Cref{lem:surj-0dim}\ref{lem:surj-0dim:iii}.

        \item We show the hypothesis of \Cref{prop:vu-subdense-slice}. We take $A\in\Sh(\X)$, by \Cref{prop:etalements}\ref{prop:etalements:i} we have to show that 
        \[\Tref{\A}\to\Tref{\pt(\Sh(\A))}\]
        is vu-subdense.
        

        We have
        \[\begin{tikzcd}
        	{\Tref{\A}} & {\Tref{\ptvu(\shvu(\A))}} \\
        	{\ptloc(\shloc(\Tref{\A})))} & {\ptloc(\Lref{\shvu(\A)})}
        	\arrow[from=1-1, to=1-2]
        	\arrow["{\text{vu-subd.}}"', hook, from=1-1, to=2-1]
        	\arrow[hook, from=1-2, to=2-2]
        	\arrow["{\cong}", from=2-1, to=2-2]
        \end{tikzcd}\]
        where $\ptloc(\shloc(\Tref{\A})))\cong \ptloc(\Lref{\shvu(\A)})$ by \Cref{circular-square}, 
        $\Tref{\ptvu(\shvu(\A))}\mono\ptloc(\Lref{\shvu(\A)})$ is an embedding by \Cref{prop:rpt=ptr},
        $\Tref{\A}\to\ptloc(\shloc(\Tref{\A})))$ is vu-subdense by \Cref{lem:surj-0dim}\ref{lem:surj-0dim:i},
        and we conclude by \Cref{prop:subdense-stability}\ref{prop:subdense-stability:o}.

        \item We will show that $\pt(f)$ satisfies \ref{prop:vu-subdense-slice}. We take $A\in\E$, by \Cref{prop:etalements}\ref{prop:etalements:ii} we have to show that
        \[\Tref{\pt(\F/f^*A)} \to \Tref{\pt(\E/A)}\]
        is vu-subdense.
        
        We have
        \[\begin{tikzcd}
        	{\Tref{\pt(\F/f^*A)}} & {\Tref{\pt(\E/A)}} \\
        	{\pt(\Lref{\F/f^*A})} & {\pt(\Lref{\E/A})}
        	\arrow[from=1-1, to=1-2]
        	\arrow["{\text{vu-subd.}}"', hook, from=1-1, to=2-1]
        	\arrow[hook, from=1-2, to=2-2]
        	\arrow["{\text{vu-subd.}}", from=2-1, to=2-2]
        \end{tikzcd}\]
        where $\Tref{\pt(\F/f^*A)}\mono\pt(\Lref{\F/f^*A})$ is a vu-subdense embedding by \Cref{lem:surj-0dim}\ref{lem:surj-0dim:ii},
        $\Tref{\pt(\E/A)}\mono\pt(\Lref{\E/A})$ is an embedding by \Cref{prop:rpt=ptr},
        $\pt(\Lref{\F/f^*A} )\to \pt(\Lref{\E/A})$ is vu-subdense by \Cref{prop:surj-prop}\ref{prop:surj-prop:iv} and \Cref{lem:surj-0dim},
        and we conclude by \Cref{prop:subdense-stability}\ref{prop:subdense-stability:o}.
        
        
        \qedhere
     \end{enumerate}
\end{proof}

Together with the reconstruction theorem (\Cref{thm:reconstruction}) we can now flesh out the relation between vu-subdense functors and surjective geometric morphisms, 
        %
        %
and fill the last line of the table (\ref{tableau}). 

\begin{cor}\label{table:surj}
    \
    
    \centerline{
    \begin{tabular}{r l||C C|C C|C C|}
    & & \multicolumn{2}{c|}{$f\squig\pt(f)$}&
        \multicolumn{2}{c|}{$\Sh(F)\squig F$}&
        \multicolumn{2}{c|}{$f\squig F$}\\
    \cline{3-8}
    geometric morphisms & vu-functors & $\Rightarrow$ & $\Leftarrow$ & $\Rightarrow$ & $\Leftarrow$ & $\Rightarrow$ & $\Leftarrow$ \\
    \cline{1-8}
    surjective & vu-subdense 
        & \checkmark & \checkmark 
        & ? & \checkmark
        & \checkmark & \checkmark
    \end{tabular}
    }

\end{cor}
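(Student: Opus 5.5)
We fill the line cell by cell, following the pattern of \Cref{table:shc}, with \Cref{thm:reconstruction} and \Cref{thm:surj} as the main inputs.

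\emph{First column.} The direction $\Rightarrow$ (if $f$ is surjective then $\pt(f)$ is vu-subdense) is \Cref{thm:surj}(iii). For $\Leftarrow$, let $f:\F\to\E$ be a geometric morphism between toposes with enough points, with $\pt(f)$ vu-subdense. Then \Cref{thm:surj}(i) shows that $\Sh(\pt(f))$ is surjective, provided $\pt(\F)$ and $\pt(\E)$ are bounded. They are, since $\Sh(\pt(\E))\simeq\E$ when $\E$ has enough points. By \Cref{thm:reconstruction}, the counit identifies $\Sh(\pt(f))$ with $f$ up to equivalence, so $f$ is surjective.

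\emph{Second column.} The direction $\Leftarrow$ (if $F$ is vu-subdense then $\Sh(F)$ is surjective) is exactly \Cref{thm:surj}(i). The direction $\Rightarrow$ stays open as ``?''. Surjectivity of $\Sh(F)$ only gives vu-subdensity of $\pt(\Sh(F))$. To descend to $F$ one would need to cancel the unit $\eta_\Y$, and \Cref{prop:subdense-stability}(i) allows this only when $\eta_\Y$ is fully faithful. This obstruction is the one subtle point of the line, and I do not expect to resolve it in general.

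\emph{Third column.} Write $f=\Sh(F)$ up to composition with the counit, so that $F=\pt(f)\circ\eta_\X$. For $\Rightarrow$: if $f$ is surjective, then $\pt(f)$ is vu-subdense by \Cref{thm:surj}(iii), and $\eta_\X$ is vu-subdense by \Cref{thm:surj}(ii). Their composite $F$ is then vu-subdense by \Cref{prop:subdense-stability}(o). For $\Leftarrow$: if $F$ is vu-subdense, then \Cref{prop:subdense-stability}(o) shows $\pt(f)$ is vu-subdense, since the second factor of a vu-subdense composite is vu-subdense. The first column, applied to $f:\Sh(\X)\to\E$ between toposes with enough points ($\Sh(\X)$ has enough points as $\X$ is bounded), then shows $f$ is surjective. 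Alternatively, $\Sh(F)$ is surjective by \Cref{thm:surj}(i), and the adjunct $f$ agrees with $\Sh(F)$ composed with the equivalence $\Sh(\pt(\E))\simeq\E$.

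Thus every \checkmark follows by bookkeeping with the adjunction, and the only cell left unresolved is $\Sh(F)\Rightarrow F$.
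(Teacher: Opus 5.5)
Your proof is correct and follows the paper's route. The paper simply states that the line follows from \Cref{thm:reconstruction} and \Cref{thm:surj}. You derive each checked cell from these two results, handle the third column by factoring $F\cong\pt(f)\circ\eta_\X$ and using the composition and cancellation properties in \Cref{prop:subdense-stability}, and correctly identify the non-cancellability of $\eta_\Y$ as the reason the $\Sh(F)\Rightarrow F$ cell stays open.
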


\begin{rmk}\label{cor:vu-subdense:counterex}
    When the codomain of $F$ does not have enough sheaves, does $\Sh(F)$ being surjective entail that $F$ is vu-subdense? We expect that answering this question requires a better algebraic understanding of vu-retractions in non-sober vu-categories, which we defer to future work.
\end{rmk}

\printbibliography

\end{document}